\numberwithin{equation}{section}
\theoremstyle{plain}
\newtheorem{thm}{Theorem}[section]
\title[Learning Directed Acyclic Graphs]{Learning Directed Acyclic Graphs with Penalized Neighbourhood Regression}
\author[Aragam, Amini, and Zhou]{Bryon Aragam,$^1$ Arash A. Amini,$^2$ \and Qing Zhou$^3$}
\address{University of California, Los Angeles}
\email{$^1$naragam@cs.cmu.edu, $^2$aaamini@stat.ucla.edu, $^3$zhou@stat.ucla.edu}
\begin{document}
\maketitle

\begin{abstract} 

We study a family of regularized score-based estimators for learning the structure of a directed acyclic graph (DAG) for a multivariate normal distribution from high-dimensional data with $p\gg n$. Our main results establish support recovery guarantees and deviation bounds for a family of penalized least-squares estimators under concave regularization without assuming prior knowledge of a variable ordering. These results apply to a variety of practical situations that allow for arbitrary nondegenerate covariance structures as well as many popular regularizers including the MCP, SCAD, $\ell_{0}$ and $\ell_{1}$. The proof relies on interpreting a DAG as a recursive linear structural equation model, which reduces the estimation problem to a series of neighbourhood regressions. We provide a novel statistical analysis of these neighbourhood problems, establishing uniform control over the superexponential family of neighbourhoods associated with a Gaussian distribution. We then apply these results to study the statistical properties of score-based DAG estimators, learning causal DAGs, and inferring conditional independence relations via graphical models. Our results yield---for the first time---finite-sample guarantees for structure learning of Gaussian DAGs in high-dimensions via score-based estimation.
\end{abstract}


\section{Introduction}
\label{sec:intro}
The problem of learning directed acyclic graphs from observational data has received increased attention recently due to its many applications in causal inference, genomics, machine learning, and theoretical computer science. While there are several competing strategies for solving this problem, a well-established line of work starting with \citet{heckerman1995} has illustrated that so-called \emph{score-based learning} outperforms existing approaches and scales to high-dimensional datasets with thousands of variables \citep{aragam2015,ramsey2016}. The basic idea behind score-based learning is to formulate the problem as a statistical $M$-estimation problem of the form
\begin{align}
\label{eq:intro:mainestimator}
\dagadjest
\,\in\, \argmin_{\gendag\,\in\,\dagspace}\plsobj(\gendag),
\end{align}

\noindent
where $\dagspace$ is the set of $p\times p$ matrices representing the weighted adjacency matrix of a directed acyclic graph (DAG) and $\plsobj$ is a \emph{score function}, usually combining a data-dependent loss function, based on a sample of size $n$, and a model complexity regularizer. 

Unfortunately, due to the nonconvex and combinatorial nature of the program \eqref{eq:intro:mainestimator}, very little is known about the statistical properties of score-based estimators in the high-dimensional regime with $p\gg n$. In contrast, the development and application of algorithms for high-dimensional score-based learning has surged in recent years. This has created a gap in the literature, whereby score-based methods based on \eqref{eq:intro:mainestimator} are commonly used by practitioners despite an absence of guarantees regarding the quality of such estimators. A goal of this paper is to study the statistical properties of $\dagadjest$, motivated by applications to the theory of high-dimensional $M$-estimation, causal inference, and graphical modeling. Our main results establish---for the first time---structure learning and estimation guarantees for the score-based estimator $\dagadjest$ in high-dimensions that avoid commonly used but uncheckable assumptions found in the literature, giving a sense of what happens when a score-based method is na\"ively applied to high-dimensional datasets.

\subsection{Background}
\label{subsec:intro:background}
Our approach is based on the well-known structural equation model (SEM) interpretation of Gaussian DAGs \citep{wright1934,drton2011}. Suppose $\dagvec=(\dagvec_{1},\ldots,\dagvec_{p})$  is a random vector satisfying
\begin{align}
\label{eq:intro:streqnmodel}
\dagvec = \dagnopi^{T}\dagvec + \dagerr,
\quad \dagerr\sim \normalN_{p}(0,\varnopi),
\end{align}

\noindent
where $\dagnopi\in\dagspace$ and $\varnopi$ is a $p\times p$ positive diagonal matrix of variances. One can interpret $\dagnopi$ as the weighted adjacency matrix of a graph. We will say that $\dagnopi$ is, or represents, a DAG if this graph is directed and acyclic. Given an $n\times p$ random matrix $\sampledagmat$ whose rows are i.i.d. drawn according to the model \eqref{eq:intro:streqnmodel}, define a penalized least-squares (PLS) score function by
\begin{align}
\label{eq:def:plsobj}
\plsobj(\gendag)
= \frac{1}{2n}\Norm{\sampledagmat-\sampledagmat\gendag}_{\frob}^{2} + \pl(\gendag),
\end{align}
where $\Norm{\cdot}_{\frob}$ is the matrix Frobenius norm and $\pl$ belongs to a class of regularizers, parametrized by the \emph{regularization parameter} $\lambda$. The corresponding score-based estimator, defined by \eqref{eq:intro:mainestimator}, is the main focus of this article. In the sequel, $\dagadjest$ always refers to the estimator defined by using \eqref{eq:def:plsobj} in \eqref{eq:intro:mainestimator}.

The objective function \eqref{eq:def:plsobj} is not new---there is a long line of previous work based on the same objective function \citep[e.g.][]{schmidt2007,shojaie2010,xiang2013,han2016}. Unfortunately, these works are mainly empirical and leave many open questions regarding the statistical properties of $\dagadjest$ unanswered. Given its popularity in practice, our goal here is to analyze the statistical properties of $\dagadjest$.

\subsection{Motivation}
\label{subsec:intro:motivation}

The decomposition \eqref{eq:intro:streqnmodel} is often referred to as a linear \emph{structural equation model} for $\dagvec$, and extends well beyond the Gaussian setting considered here. Structural equations have a long history in the statistical literature, dating all the way back to \citet{wright1921,wright1934}. More recently, structural equations have become an essential quantitative analysis tool in the social sciences \citep{yuan2007sem}. Structural equations have also played an important role in machine learning and causal inference \citep{buhlmann2014, mooij2014} as well as covariance selection in statistics \citep{pourahmadi2013,wermuth1980}. These models have been applied recently in biology \citep{cai2013}, network modeling \citep{horvath2011}, and psychology \citep{guay2015,morin2015}.

Motivated by these applications and recent interest in developing fast algorithms for learning DAGs \citep{schmidt2007,teyssier2012,fu2013,xiang2013,aragam2015,gu2016,ramsey2016}, we seek to address three fundamental questions regarding score-based estimators:
\begin{enumerate}
\item \emph{Score-based learning.} It is well-known that DAG models are nonidentifiable (Section~\ref{subsec:prelim:perms})---nonetheless, is it possible to obtain statistical guarantees for $\dagadjest$ when $p\gg n$? 
\item \emph{Causal DAG learning.} In some situations, there exists a unique causal DAG that represents the true, underlying distribution. In such situations, can $\dagadjest$ recover the structure of the causal DAG?
\item \emph{Conditional independence learning.} What can be said about the conditional independence (CI) relations inferred by $\dagadjest$, particularly when there is no faithful DAG that represents the joint distribution?
\end{enumerate}
Crucially, in all three problems, we are interested in \emph{structure learning}  (also known as \emph{support recovery} and \emph{model selection consistency}), i.e. recovering the correct graph. In graphical modeling, stucture learning is of fundamental importance since it is a critical step for learning the conditional independence structure of the underlying distribution; $\ell_{2}$-consistency is insufficient for this purpose. Typically, the \emph{faithfulness} assumption is needed (Section~\ref{subsec:app:ci}), which is a strong assumption that rarely holds in practice \citep{lin2014,uhler2013}. Our analysis avoids this assumption entirely, and to the best of our knowledge, provides the first structure learning guarantees for DAGs \emph{without faithfulness} in high-dimensions.

The organization of the rest of this paper is as follows: In the rest of this section, we review previous work and outline our main contributions. Section~\ref{sec:prelim} provides some intuition behind the estimator \eqref{eq:intro:mainestimator} and covers necessary preliminaries.  Section~\ref{sec:nhbd} sets up a framework for neighbourhood regression that is fundamental to our main results. In Section~\ref{sec:results}, we present our main results, establishing uniform nonasymptotic bounds on the probability of false selection and the estimation error of a family of penalized least-squares estimators. In Section~\ref{sec:app}, we apply these results to answer each of the three questions outlined above. Finally, we compare our results to the literature in Section~\ref{subsec:app:comparison} before concluding with some extensions and generalizations in Section~\ref{sec:conclusion}. As the proofs are quite involved and invoke novel technical machinery, we outline the main ideas in Section~\ref{sec:proofs} and postpone detailed proofs of the various technical results to the Appendix. 

\subsection{Previous work}
\label{subsec:intro:prevwork}

Despite substantial methodological and algorithmic progress towards solving \eqref{eq:intro:mainestimator} when $p$ is large, theoretical progress in high-dimensions has been slower.
\citet{chickering2003} showed that in a low-dimen\-sional setting with $p$ fixed, score-based learning is sufficient to learn a so-called \emph{faithful} DAG for $X$ in the asymptotic limit $n\to\infty$, where $n$ is the number of i.i.d. samples. The first truly high-dimensional results for DAG learning were for the PC algorithm \citep{kalisch2007}, however, this algorithm is not based on \eqref{eq:intro:mainestimator} and requires strong assumptions such as faithfulness. \cite{geer2013} proved the first such results for a score-based estimator, establishing bounds on the $\ell_{2}$-error and the number of edges in $\hB$ for a thresholded $\ell_{0}$-penalized maximum likelihood estimator. Notably, their results fall short of providing consistency in structure learning. One of the main contributions of the current work is a unified regression framework for analyzing score-based estimators that provides guarantees on support recovery, parameter estimation, and sparsity. Furthermore, our work covers a wide spectrum of nonconvex regularizers including both $\ell_0$ and $\ell_1$ as boundary cases. A more detailed comparison to the aforementioned results is provided in Section~\ref{subsec:app:comparison}.

Finally, we note that---perhaps surprisingly---there has been much more theoretical progress for nonlinear and non-Gaussian models \citep{peters2012nonlinear,peters2014,mooij2014}. There has also been progress using multi-stage methods that separate the learning procedure into several decoupled steps \citep{loh2014,buhlmann2014}. These results are largely due to various assumptions that guarantee identifiability \citep[e.g.][]{shimizu2006,hoyer2009,zhang2009}. Under such assumptions, the analysis becomes substantially easier. By contrast, the present work studies a fully nonidentifiable model as in \cite{geer2013}, and as such the main complication in the theoretical analysis is dealing with the existence of equivalent DAGs as discussed in Section~\ref{subsec:prelim:perms}. Furthermore, our approach via score-based learning consists of a single learning step that effectively combines neighbourhood search, order search, and regression by minimizing a single penalized loss function.

\subsection{Contributions}
\label{subsec:intro:contrib}

Our main results will be organized into two sections: In Section~\ref{sec:results}, we develop our technical results for neighbourhood regression problems, and then in Section~\ref{sec:app} we apply these results to address the three main questions raised in Section~\ref{subsec:intro:motivation}.

On the technical side, we provide simultaneous, uniform control over the class of DAGs satisfying \eqref{eq:intro:streqnmodel} for a joint Gaussian distribution, including both support recovery guarantees (Section~\ref{subsec:results:supp}) and deviation bounds (Section~\ref{subsec:results:dev}), by leveraging a neighbourhood regression interpretation of the program \eqref{eq:intro:mainestimator}. 
As the cardinality of the class of such DAGs is $O(p!)$, it is very challenging to obtain any uniform control result. We overcome this technical difficulty by introducing a novel notion of monotonicity among the neighbourhood regressions associated with a DAG. This proof technique is completely different from existing work and makes an explicit connection between graphical models and existing concepts from the regression literature. In this way, it sheds new light on the interplay between the covariance matrix $\trueCov$ and its regression coefficients through the intuitive concept of neighbourhood regression.

Besides the technical novelty, the significance of this work is seen from its direct contributions to the following three statistical applications which motivate our analysis:

\begin{enumerate}[leftmargin=0cm,itemindent=.5cm, itemsep=.5em]
\item \emph{Score-based learning} (Section~\ref{subsec:app:scorebased}). Without any identifiability assumptions,
we show that $\dagadjest$ consistently estimates the structure of a sparse DAG representation of the joint Gaussian distribution via \eqref{eq:intro:streqnmodel}, thus providing useful theoretical guarantees for the practical application of score-based DAG learning methods. Our result is intriguing more generally: Whereas much of the literature on high-dimensional statistics has focused on identifiable models, our results indicate that much of this theory carries over to the nonidentifiable setting, and more importantly, it is possible to \emph{adaptively} estimate a well-behaved parameter instead of an arbitrary parameter.

\item \emph{Causal DAG learning} (Section~\ref{subsec:app:causal}).  Under an identifiability assumption, we establish that $\dagadjest$ can be used to learn the ``true'' causal DAG.
We generalize existing results on estimating causal DAGs with equal error variances to what we call \emph{minimum-trace} DAGs. Furthermore, we strengthen existing results to include support recovery in the high-dimensional setting. 

\item \emph{Conditional independence learning} (Section~\ref{subsec:app:ci}). 
We show that it is possible to use our framework to consistently estimate the full set of CI relations in a Gaussian distribution, even in the absence of a faithful DAG representation. Detecting CIs among a set of random variables from data is a key motivation for structure learning of graphical models, including DAGs. Achieving this task without the faithfulness assumption distinguishes our work from the existing literature on graphical models.
\end{enumerate}

\noindent
Notwithstanding these interesting applications to causal inference and graphical models, the first application is interesting in and of itself from a purely statistical standpoint: $\dagadjest$ is a popular, useful estimator and yet we have an extremely limited theoretical understanding of its properties. In this paper we attempt to provide a comprehensive portrait of the behaviour of $\dagadjest$, providing much needed justification---and caution---for its use in applications.

\subsection*{Notation and terminology} 
Universal constants which may not be the same from line to line will be denoted by $c_{1}$, $c_{2}$, etc. The standard $\ell_q$ norms for $q\in[0,\infty]$ will be denoted by $\norm{\cdot}_q$. For a matrix $A \in \R^{n \times p}$,  $\norm{A}_q$ is its $\ell_q$ norm, viewing $A$ as a vector in $\R^{np}$, and so, in particular, the Frobenius norm on matrices is denoted by $\norm{\cdot}_{\frob}$. The maximum and minimum eigenvalues of a matrix $A$ are denoted by $\maxeval(A)$ and $\mineval(A)$, respectively. The support of $A=(a_{ij})$ is defined by $\supp(A):=\{(i,j):a_{ij}\ne0\}$ and the cardinality of a set by $|\cdot|$. We also make use of the column notation $A=[\,a_{1}\|\cdots\|a_{p}\,]\in\R^{n\times p}$ to denote the columns of $A$. Given a permutation $\perm$, $\permmat_\perm$ denotes the associated permutation operator on matrices: For any matrix $A$, $\permmat_\perm A$ is the matrix obtained by permuting the rows and columns of $A$ according to $\perm$, so that $(\permmat_\perm A)_{ij}=a_{\perm(i)\perm(j)}$. For any integer $m$, we define $[m]=\{1,\ldots,m\}$ and $[m]_j = [m] - \{j\}$. For a vector $\genv\in\R^m$ and a subset $S\subset[m]$, we let $\select{v}{S}\in\R^{|S|}$ denote the restriction of $\genv$ to the components in $S$. For a matrix $A\in\R^{n\times m}$, $\select{A}{S}\in\R^{n\times|S|}$ denotes \emph{column-wise} restriction to the columns in $S$. Given two quantities $X$ and $Y$, which may depend on $n$ and $\trueCov$, we will write $X\les Y$ to mean there exists a constant $a>0$---independent of $n$ and $\trueCov$---such that $X\le aY$, and analogously for $X\ges Y$. A positive definite matrix is denoted as $\trueCov \posdef 0$. 

We write $\sampledagmat\iid\normalN_{p}(0,\trueCov)$ for a random matrix $\sampledagmat \in \R^{n \times p}$ to mean that the \emph{rows} are i.i.d. draws from $\normalN_{p}(0,\trueCov)$. Boldface type is reserved for \emph{random} quantities that depend on the sample size $n$; for example a random matrix $\sampledagmat\in\R^{n\times p}$ or the response $\sampley\in\R^{n}$ in a linear model. Events defined on some probability space will usually be denoted by script font, e.g. $\mE$, $\mathcal{F}$, etc.

\section{Preliminaries}
\label{sec:prelim}

In this section we discuss some preliminary notions and develop some intuition for the problem we study. 
Throughout this section, it is important to keep in mind our main goal: To study the consistency of the estimator $\dagadjest$ by decomposing the estimation problem into a series of simpler, neighbourhood regression problems. The present section sets the stage, while the technical details of neighbourhood regression are developed in the next section (Section~\ref{sec:nhbd}). The setup outlined here, including much of the notation, is the same as~\citet{geer2013}. In other words, we work with the same statistical model as in~\citet{geer2013}, however, our results are more general and our proof technique is completely different; for a comparison, see Section~\ref{subsec:app:comparison}.

\subsection{Nonidentifiability and permutation equivalence}
\label{subsec:prelim:perms}

A fundamental property of the model \eqref{eq:intro:streqnmodel} is that the parameters $(\dagnopi,\varnopi)$ are statistically nonidentifiable. It follows from \eqref{eq:intro:streqnmodel} that $X\sim\normalN_{p}(0,\trueCov(\dagnopi,\varnopi))$, where 
\begin{align}
\label{eq:def:sigmafcn}
\trueCov(\dagnopi,\varnopi) 
:= (I-\dagnopi)^{-T}\varnopi(I-\dagnopi)^{-1}.
\end{align}

\noindent
This yields a parametrization of a multivariate normal distribution in terms of a DAG $\dagnopi$ and its conditional variances $\varnopi$. As it turns out, the mapping $(\dagnopi,\varnopi)\mapsto \trueCov(\dagnopi,\varnopi)$ is not one-to-one. In fact, given $\Sigma\posdef 0$, we can show that for each order of the variables, there is a DAG $(\dagnopi,\varnopi)$ such that $\trueCov(\dagnopi,\varnopi)=\Sigma$. This is most easily illustrated with an example:

\begin{example}
\label{ex:mainexample}
Consider the following covariance matrix:
\begin{align}
\label{eq:ex:covmat}
\trueCov = \begin{pmatrix}
6 & 4 & -6 & -30 \\
4 & 4 & -4 & -20 \\
-6 & -4 & 7 & 39 \\
-30 & -20 & 39 & 234
\end{pmatrix}.
\end{align}

\noindent
Given $\trueCov$, we can construct matrices $(\dagnopi, \varnopi)$ that satisfy \eqref{eq:intro:streqnmodel} by the following procedure: First, project $\dagvec_{1}$ onto $\dagvec_{2}$, then project $\dagvec_{3}$ onto $\dagvec_{2}$ and $\dagvec_{1}$, and finally project $\dagvec_{4}$ onto $\dagvec_{2}$, $\dagvec_{1}$, and $\dagvec_{3}$. This induces an ordering $\prec$ on the variables given by $\dagvec_{4}\prec\dagvec_{3}\prec\dagvec_{1}\prec\dagvec_{2}$, wherein each $\dagvec_j$ is projected onto the nodes after it under $\prec$. This ordering induces a permutation $\perm_1$ defined by 
$\perm_{1}(1) = 4$, 
$\perm_{1}(2) = 3$, 
$\perm_{1}(3) = 1$, 
$\perm_{1}(4) = 2$.
The coefficients of each linear projection and the resulting residual variances lead to a set of structural equations as in \eqref{eq:intro:streqnmodel}. Denoting the parameters obtained in this way by $\dagnopi=\tB{\perm_{1}}$ and $\varnopi=\tOm{\perm_{1}}$, we obtain the following parameters for \eqref{eq:ex:covmat}:
\begin{align}
\label{eq:param:pi1}
\tB{\perm_{1}}
&= \begin{pmatrix}
0 & 0 & -1 & 4 \\
1 & 0 & 0 & 0 \\
0 & 0 & 0 & 9 \\
0 & 0 & 0 & 0
\end{pmatrix}, 
\quad
\tOm{\perm_{1}}
= \begin{pmatrix}
2 & 0 & 0 & 0 \\
0 & 4 & 0 & 0 \\
0 & 0 & 1 & 0 \\
0 & 0 & 0 & 3
\end{pmatrix}.
\end{align}

\noindent
This DAG is depicted on the left in Figure~\ref{fig:equivdag}.

\begin{figure}[t]

\centering
\begin{tikzpicture}[scale=1]
    \tikzset{
    mainnode/.style = {draw=black, circle, fill=white, text=black},
    transform shape, thick,
    every node/.style = {minimum size = 5mm},
    level 1/.style = {sibling distance=1.5cm, scale=0.9},
    level distance = 2cm
}

  \node[mainnode](X1) at (1,4) {$X_{1}$};
  \node[mainnode](X2) at (4,4) {$X_{2}$};
  \node[mainnode](X3) at (4,1) {$X_{3}$};
  \node[mainnode](X4) at (1,1) {$X_{4}$};
  \draw[->] (X1) -- (X4)  node[midway, fill = white, inner sep=5pt, minimum size=5pt]{4};
  \draw[->, very thick, black] (X1) -- (X3)  node[midway, fill = white, inner sep=5pt, minimum size=5pt]{\textbf{-1}};
  \draw[->, very thick, black] (X2) -- (X1)  node[midway, very thick, fill = white, inner sep=5pt, minimum size=5pt]{\textbf{1}};
  \draw[->] (X3) -- (X4)  node[midway, fill = white, inner sep=5pt, minimum size=5pt]{9};
\end{tikzpicture}%
\hspace{5em}%
\begin{tikzpicture}[scale=1]
    \tikzset{
    mainnode/.style = {draw=black, circle, fill=white, text=black},
    transform shape, thick,
    every node/.style = {minimum size = 5mm},
    level 1/.style = {sibling distance=1.5cm, scale=0.9},
    level distance = 2cm
}

  \node[mainnode](X1) at (1,4) {$X_{1}$};
  \node[mainnode](X2) at (4,4) {$X_{2}$};
  \node[mainnode](X3) at (4,1) {$X_{3}$};
  \node[mainnode](X4) at (1,1) {$X_{4}$};
  \draw[->] (X1) -- (X4)  node[midway, fill = white, inner sep=5pt, minimum size=5pt]{4};
  \draw[->, very thick, black] (X2) -- (X1)  node[midway, fill = white, inner sep=5pt, minimum size=5pt]{$\mathbf{\frac13}$};
  \draw[->, very thick, black] (X3) -- (X1)  node[midway, fill = white, inner sep=5pt, minimum size=5pt]{$\mathbf{-\frac23}$};
  \draw[->, very thick, black] (X3) -- (X2)  node[midway, fill = white, inner sep=5pt, minimum size=5pt]{$\mathbf{-\frac47}$};
  \draw[->] (X3) -- (X4)  node[midway, fill = white, inner sep=5pt, minimum size=5pt]{9};
\end{tikzpicture}%
\caption{Two equivalent DAGs with different edge weights, as indicated by the edges highlighted in bold. \label{fig:equivdag}}
\end{figure}
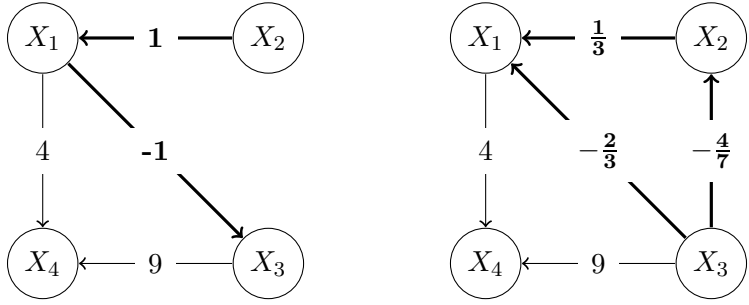

If we choose a different ordering, say $\dagvec_{4}\prec\dagvec_{1}\prec\dagvec_{2}\prec\dagvec_{3}$, we obtain a different set of parameters:
\begin{align}
\label{eq:param:pi2}
\tB{\perm_{2}}
&= \begin{pmatrix}
0 & 0 & 0 & 4 \\
\frac13 & 0 & 0 & 0 \\
-\frac23 & -\frac47 & 0 & 9 \\
0 & 0 & 0 & 0
\end{pmatrix}, 
\quad
\tOm{\perm_{2}}
= \begin{pmatrix}
\frac23 & 0 & 0 & 0 \\
0 & \frac{12}{7} & 0 & 0 \\
0 & 0 & 7 & 0 \\
0 & 0 & 0 & 3
\end{pmatrix},
\end{align}

\noindent
where $\pi_2$ is the permutation induced by this ordering. This DAG is depicted on the right in Figure~\ref{fig:equivdag}.  The fact that the incoming edges for $\dagvec_{4}$ are the same in both cases is consistent with the observation that under both orderings, we are projecting $\dagvec_{4}$ onto $\{\dagvec_{1},\dagvec_{2},\dagvec_{3}\}$.
\end{example}

Both \eqref{eq:param:pi1} and \eqref{eq:param:pi2} lead to a linear model as in \eqref{eq:intro:streqnmodel} with $\cov(\dagvec)=\trueCov$, but with distinct numbers of edges. Two other key points from this example are worth emphasizing:
\begin{itemize}
\item Both DAGs $\tB{\perm_{j}}$ ($j=1,2$) are sparse, as measured by the number of nonzero entries in $\tB{\perm_{j}}$.
\item The conditional variances given by $\tOm{\perm_{j}}$ ($j=1,2$) are different in general. Moreover, in this example there is no~\emph{equivariance} DAG; that is, one whose conditional variance matrix $\varnopi$ is a constant multiple of the identity (Section~\ref{subsec:app:causal}).
\end{itemize}

\noindent
These observations motivate many of the developments in the sequel and lead us to make the following definition:

\begin{defn}
\label{defn:eqclass}
Let $\diagmatrices$ denote the space of $p\times p$ diagonal matrices with positive entries on the diagonal. Given a covariance matrix $\trueCov \posdef 0$, define the \emph{equivalence class} of $\trueCov$ to be
\begin{align*}
\eqclass(\trueCov)
&= \big\{\dagnopi\in\dagspace: \text{$\trueCov=\trueCov(\dagnopi,\varnopi)$ for some $\varnopi\in\diagmatrices$}\big\},
\end{align*}
where $\dagspace\subset\R^{p\times p}$ is the space of DAGs and $\trueCov(\dagnopi,\varnopi)$ is defined by \eqref{eq:def:sigmafcn}. The DAGs in $\eqclass(\trueCov)$ will be called \emph{equivalent}.
\end{defn}

\noindent
Equivalently, $\dagnopi$ is the set of DAGs that satisfy \eqref{eq:intro:streqnmodel} for some $\varnopi\in\diagmatrices$. For the reader familiar with graphical models, note that $\eqclass(\trueCov)$ is \emph{not} the same as the Markov equivalence class, and in fact is quite different.

We will now show that $\eqclass(\trueCov)$ can be constructed explicitly. Let $\allperms$ denote the class of permutations on $p$ elements and $\perm\in\allperms$ be a fixed permutation. Write $\trueInv:=\trueCov^{-1}$ and use the (modified) Cholesky decomposition to write $\permmat_\perm\trueInv$ uniquely as $\permmat_{\perm}\trueInv = (I-L)D^{-1}(I-L)^T$ where $L$ is strictly lower triangular and $D\in\diagmatrices$. Define
\begin{align}
\label{eq:def:tB}
\tB{\perm} 
:= \permmat_{\perm^{-1}}L, \quad 
\tOm{\perm}
:= \permmat_{\perm^{-1}}D.
\end{align}

\noindent
Since an adjacency matrix represents a DAG if and only if it is permutation-similar to a strictly lower triangular matrix, $\tB{\perm}$ is indeed a DAG. We refer to $\tB{\perm}$ as the DAG associated with $\pi$.

\begin{lemma}
\label{lem:eqclass}
For any $\trueCov\posdef 0$, the equivalence class of $\trueCov$ is given by $\eqclass(\trueCov) = \{\tB{\perm} : \;\perm\in\allperms\}$.
\end{lemma}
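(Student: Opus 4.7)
The plan is to prove both inclusions. For the forward direction, I would fix $\perm\in\allperms$ and show $\tB{\perm}\in\eqclass(\trueCov)$ by reversing the Cholesky construction. Starting from $\permmat_\perm\trueInv=(I-L)D^{-1}(I-L)^T$, I would apply $\permmat_{\perm^{-1}}$ to both sides and use three basic properties of the permutation operator: it preserves the identity ($\permmat_{\perm^{-1}}I=I$), commutes with transposition, and distributes over products (since $\permmat_\perm A = PAP^T$ for the permutation matrix $P$ of $\perm$, one has $\permmat_\perm(AB)=(\permmat_\perm A)(\permmat_\perm B)$). These yield
\begin{equation*}
\trueInv \;=\; (I-\tB{\perm})\,\tOm{\perm}^{-1}\,(I-\tB{\perm})^T,
\end{equation*}
and inverting both sides shows $\trueCov=\trueCov(\tB{\perm},\tOm{\perm})$. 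Along the way I would note $\tOm{\perm}\in\diagmatrices$ (since permuting a diagonal matrix with positive entries preserves both properties) and that $\tB{\perm}$ is indeed a DAG because $\permmat_\perm\tB{\perm}=L$ is strictly lower triangular, invoking the characterization cited in the paper.

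For the reverse direction, suppose $\dagnopi\in\eqclass(\trueCov)$ with associated diagonal $\varnopi\in\diagmatrices$. Since $\dagnopi$ is a DAG, a topological ordering of its vertices furnishes a permutation $\perm$ such that $L':=\permmat_\perm\dagnopi$ is strictly lower triangular; set $D':=\permmat_\perm\varnopi$, which remains in $\diagmatrices$. From $\trueCov=(I-\dagnopi)^{-T}\varnopi(I-\dagnopi)^{-1}$ I would take inverses, apply $\permmat_\perm$, and use the same product/transpose properties of $\permmat_\perm$ to obtain
\begin{equation*}
\permmat_\perm\trueInv \;=\; (I-L')\,(D')^{-1}\,(I-L')^T.
\end{equation*}
Comparing with the modified Cholesky decomposition $\permmat_\perm\trueInv=(I-L)D^{-1}(I-L)^T$ and invoking its uniqueness (valid because $\permmat_\perm\trueInv\posdef 0$) gives $L=L'$ and $D=D'$. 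Applying $\permmat_{\perm^{-1}}$ then yields $\dagnopi=\tB{\perm}$, completing the inclusion.

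The main obstacle, and the only place any care is needed, is the bookkeeping around $\permmat_\perm$: one must verify that it commutes with inversion and transposition, distributes over matrix products, and maps $\diagmatrices$ to itself and strictly lower triangular matrices to DAGs. Once these are in hand, both directions reduce to existence and uniqueness of the modified Cholesky factorization of the positive definite matrix $\permmat_\perm\trueInv$, which is standard. I would state and verify these permutation identities once at the start of the proof and then apply them freely in both directions.
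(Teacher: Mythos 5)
Your proposal is correct and follows essentially the same route as the paper: both arguments hinge on the permutation-operator identities (the paper encapsulates these in its Lemma on $\permmat_\pi(MNM^T)$) and on the uniqueness of the modified Cholesky factorization of the positive definite matrix $\permmat_\pi\trueInv$. You split the argument into two explicit inclusions where the paper packages it as a single biconditional chain, but the mathematical content is identical.
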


\noindent
This characterization of the equivalence class confirms that $\eqclass(\trueCov)$ depends only on $\trueCov$ and not on the sample $\sampledagmat$, and will play an important role in the rest of the paper. Therefore, without further qualification, we shall always write an arbitrary element of $\eqclass(\trueCov)$ as $\tB{\perm}$. The columns of $\tB{\perm}$ will be denoted by $\dagcol_j(\perm)$, and the $j$th diagonal element of $\tOm{\perm}$ will be denoted by $\dagerrvar_j^2(\perm)$. It follows from these definitions and \eqref{eq:intro:streqnmodel} that 
\begin{align}
\label{eq:localsem}
\dagvec_{j}
&= \dagcol_j(\perm)^T\dagvec + \dagerr_j(\perm),
 \;\; \text{where\;\; $\dagerr_j(\perm)\sim\normalN(0,\dagerrvar_j^2(\perm))$,}
\end{align}
for $j=1,\ldots,p$. As a consequence, the joint distribution factors according to the local conditional distributions implied by \eqref{eq:localsem}, which makes $\tB{\perm}$ a \emph{Bayesian network} for each $\perm\in\allperms$. Furthermore, each $\tB{\perm}$ is a so-called minimal I-map for $\normalN_{p}(0,\trueCov)$ \citep{koller2009}.

\subsection{Global and restricted minimizers}
\label{subsec:prelim:minimizers}

In presenting our results, we will take advantage of the permutation decomposition of the equivalence class $\eqclass(\trueCov)$, according to Lemma~\ref{lem:eqclass}. In this section, we formalize these ideas further and provide some intuition behind the estimator \eqref{eq:intro:mainestimator}. In particular, these ideas are purely theoretical and are not intended to provide practical guidance for computing $\dagadjest$ (for more on computation, see Section~\ref{subsec:conclusion:comp}).

Recall that $\dagspace$ is the space of $p\times p$ real matrices that represent DAGs when interpreted as weighted adjacency matrices. For each permutation $\perm\in\allperms$, define a subset of $\dagspace$ by
\begin{align*}
\subdagspace{\perm}
&= \{\gendag\in\dagspace: \text{$\permmat_{\perm} \gendag$ is lower triangular}\}.
\end{align*}

\noindent
A DAG $B = [\,\beta_{1}\|\cdots\|\beta_{p}\,]\in\dagspace$ is in $\subdagspace{\perm}$ if and only if $\supp(\beta_{j})\subset \dagcand_{j}(\perm)$ for all $j=1,\ldots,p$, where
\begin{align}
\label{eq:def:dagcand}
\dagcand_{j}(\perm)
:= \{k : \perm^{-1}(k) > \perm^{-1}(j)\}
\end{align}
consists of the nodes $X_k$ that come after $X_j$ under the ordering $X_{\perm(i)}\prec X_{\perm(i+1)}$ for $i=1,\ldots,p-1$. In other words, for each node $X_{j}$, the permutation $\perm$ defines a unique set of candidate parents given by \eqref{eq:def:dagcand}, and $B\in\subdagspace{\perm}$ if and only if the parent set of $\beta_j$ comes from $\dagcand_{j}(\perm)$ for all $j$. By definition, $\tB{\perm}\in\subdagspace{\perm}$ for every $\perm$ and hence $\supp(\dagcol_j(\perm))\subset\dagcand_{j}(\perm)$ for all $j$.

Recall the estimator $\dagadjest$ defined via~\eqref{eq:intro:mainestimator} and~\eqref{eq:def:plsobj}. Using the decomposition of the feasible region in~\eqref{eq:intro:mainestimator} as $\dagspace = \bigcup_\perm\subdagspace{\perm}$, we can build some intuition about $\dagadjest$: Suppose we had oracle knowledge of some \emph{optimal} permutation $\goodperm$ in advance. For example, $\goodperm$ might be a permutation that minimizes the number of edges among all the equivalent DAGs. Then we can ignore any DAG that is not consistent with $\goodperm$, and hence restrict our search space to $\subdagspace{\goodperm}$. Thus, estimating the \emph{oracle DAG} $\tB{\goodperm}$ reduces to minimizing $\plsobj(\gendag)$ with the additional constraint that $B\in\subdagspace{\goodperm}$, which is exactly equivalent to estimating an autoregressive model. Of course, in practice we do not know $\goodperm$, so in addition to parameter estimation, an additional challenge that arises in DAG learning is adaptively finding an optimal permutation $\goodperm$, which is a challenging combinatorial problem.

These observations motivate the following definition:

\begin{defn}
\label{defn:restrictedmin}
For $\perm\in\allperms$, a \emph{restricted minimizer} of $\plsobj$ is
\begin{align}
\label{eq:def:restrictedmin}
\dagadjest(\perm)
&\;\in\; \argmin_{\gendag\, \in \,\subdagspace{\perm}} \;\plsobj(\gendag).
\end{align}

\noindent
The columns of $\dagadjest(\perm)$ will be denoted by $\dagcolest_{j}(\perm)$, $j=1,\ldots,p$.
\end{defn}

\noindent
Our strategy will be to show that $\dagadjest(\perm)\approx\tB{\perm}$ \emph{for all $\perm$} simultaneously. In Section~\ref{subsec:app:scorebased}, we will exploit this decomposition into restricted minimizers to establish guarantees on the global minimizer $\dagadjest$.

\section{Neighbourhood regression}
\label{sec:nhbd}

The core of our analysis is the regression decomposition \eqref{eq:localsem} which we will interpret as a neighbourhood regression problem.
In the literature on undirected graphical models, ``the'' neighbourhood of a fixed node $\dagvec_{j}$ is implicitly understood to be the set of all other variables, namely the collection $\dagvec_{-j}$ \citep{meinshausen2006}. For DAGs, we need to generalize the notion of  neighbourhood to an arbitrary subset $S\subset[p]_{j}$.
At a high-level, the \emph{neighbourhood regression problem} for the node $\dagvec_{j}$ and a subset $S\subset[p]_{j}$ is the problem of estimating the linear projection of $\dagvec_{j}$ onto $\dagvec_{S}$. These problems arise naturally from the iterative projection procedure described in Section~\ref{subsec:prelim:perms} and made explicit by \eqref{eq:localsem}.  Doing this for all possible permutations is tantamount to projecting $\dagvec_{j}$ onto all possible subsets $S\subset[p]_{j}$. In this section we formalize these notions and introduce the concept of a \emph{model selection exponent}, which quantifies the difficulty of a neighbourhood regression problem.

\subsection{Penalized least-squares estimators}
\label{subsec:nhbd:regression}

The following defines the popu-lation-level quantity that is our primary interest:

\begin{defn}
\label{defn:nhbd}
For any $S \subset [p]_j$, let 
\begin{align*}
\nhbdcoef_j(S) &:= \argmin_{\beta \,\in\, \R^{p}, \;\,\supp(\beta)\, \subset\, S}
\E \big[\dagvec_{j} - \beta^{T}\dagvec\big]^2.
\end{align*}

\noindent
We call $\nhbdcoef_j(S)$ the SEM coefficients for variable $j$ regressed on the variables $S$. 
\end{defn}

\noindent
The SEM coefficients $\nhbdcoef_j(S)$ are \emph{population} level quantities that depend on $\trueCov$, but not on the sample $\sampledagmat$. Furthermore, it is easy to verify that $\dagcol_{j}(\perm) = \nhbdcoef_j(\dagcand_{j}(\perm))$ from the definition of $\dagcol_{j}(\perm)$ as the $j$th column of $\tB{\perm}$.

Now we turn to the problem of estimating $\nhbdcoef_j(S)$ via PLS. For now, $\pl$ is assumed to be a fixed, possibly nonconvex, regularizer. We start with some abstraction in order to emphasize the key underlying assumptions.

\begin{defn}
\label{defn:abstractregression}
Suppose $\fixedy\in\R^{n}$ and $\fixeddesignmat \in \R^{n \times m}$. Let $S \subset [m]$ and consider the set defined by
\begin{align}
\label{eq:supp:restr:estim}
\abstractpls_\lambda(\fixedy, \fixeddesignmat; S) 
:=  \argmin_{\theta \in \R^m,\; \supp(\theta) \,\subset\, S} \;\frac{1}{2n} \norm{\fixedy -\fixeddesignmat \theta}_2^2 + \pl(\theta),
\end{align}

\noindent
i.e., the set of global minimizers of the support-restricted PLS problem above. Let $\abstractpls_\lambda(\fixedy,\fixeddesignmat) := \abstractpls_\lambda(\fixedy,\fixeddesignmat;[m])$ correspond to the case where there is no support restriction.
\end{defn}

In this abstract definition, $\fixedy$ is considered a fixed quantity and may have no relation to $\fixeddesignmat$. Of course, in practice we are interested the case where $\fixedy$ and $\fixeddesignmat$ are linked via a linear model: If $\sampley = \fixeddesignmat\lincoef + \samplelinerr$, where $\sampley\in\R^{n}$, $\fixeddesignmat \in \R^{n \times m}$, $\lincoef \in \R^m$ and $\samplelinerr \sim \normalN_{n}(0,\sigma^2 I_n)$, then $\abstractpls_\lambda(\sampley, \fixeddesignmat)$ is the collection of PLS estimators for $\lincoef$ in classical linear regression. The support-restricted version $\abstractpls_\lambda(\sampley, \fixeddesignmat; S)$ then allows us to properly define a neighbourhood regression problem. 
Let $\sampledagmatelem_{j}$ denote the $j$th column of $\sampledagmat$.

\begin{defn}[Neighbourhood regression]
\label{defn:nhbdregression}
The \emph{neighbourhood regression problem} for node $\dagvec_{j}$ given a neighbourhood $S\subset[p]_j$ is defined to be the (possibly nonconvex) program solved by $\abstractpls_\lambda(\sampledagmatelem_j,\sampledagmat; S)$. An arbitrary solution to this program will be denoted by $\hnhbdcoef_j(S)$, i.e. $\hnhbdcoef_j(S)\in\abstractpls_\lambda(\sampledagmatelem_j,\sampledagmat; S)$.
\end{defn}

\noindent
For any $\perm$ such that $\dagcand_j(\perm)=\dagcand$, a solution $\hnhbdcoef_j(\dagcand)$ estimates $\dagcol_{j}(\perm) = \nhbdcoef_j(\dagcand)$. 
Thus neighbourhood regression problems are the basic units in learning the parent set of a node and hence the DAG structure.
This concept allows us to do two things:
\begin{enumerate}
\item A key step in our proof is to show that the analysis of $\dagadjest$ can be  reduced to the study of a (very large) collection of neighbourhood regression problems;
\item We will associate to each neighbourhood regression problem an ``exponent'', which is a fundamental quantity measuring how difficult the corresponding model selection problem is for a given regularizer. These exponents will then be used to write down explicit, non-asymptotic upper bounds on the model selection failure of $\dagadjest$ in terms of the model selection failure of each neighbourhood regression. 
\end{enumerate}

\noindent
The details of these reductions can be found in Section~\ref{sec:proofs}.

\subsection{Model selection exponents}
\label{subsec:nhbd:msexp}

Given some $n\times m$ matrix $\fixeddesignmat$ and $m$-vector $\lincoef$, define a set of ``bad'' noise vectors as follows:
\begin{align}
\label{eq:def:mserrset}
\mserrset(\fixeddesignmat,\lincoef;S)
:= \Big\{
\fixedlinerr\in\R^{n} :
\supp(\hlincoef) &\neq \supp(\lincoef) \\
\nonumber
&\exists \;\hlincoef \in  \abstractpls_\lambda(\fixeddesignmat\lincoef+\fixedlinerr,\fixeddesignmat;S) \Big\}.
\end{align}

\noindent
For a random vector $\samplelinerr\in\R^n$ (e.g. $\samplelinerr\sim\normalN_{n}(0,\sigma^{2}I_{n})$), we then have the following model selection failure event:
\begin{align}
\label{eq:def:msevent}
\msevent(\samplelinerr,\fixeddesignmat,\lincoef;S)
:= \Big\{
\samplelinerr\in\mserrset(\fixeddesignmat,\lincoef;S)
\Big\}.
\end{align}

\noindent
As usual we use the shorthand $\msevent(\samplelinerr,\fixeddesignmat,\lincoef) = \msevent(\samplelinerr,\fixeddesignmat,\lincoef;[m])$.

\begin{defn}
\label{defn:msexp}
Given a regularizer $\pl$, the \emph{model selection exponent} for the regression problem $\sampley = \fixeddesignmat\lincoef + \samplelinerr$ is defined to be
\begin{align*}
\msexp_\lambda(\fixeddesignmat,\lincoef,\sigma^2) 
:= -\log \pr \big[\msevent(\samplelinerr,\fixeddesignmat,\lincoef)\big],
\end{align*}
where $\pr$ is taken with respect to the distribution of $\samplelinerr\sim\normalN_{n}(0,\sigma^2 I_n)$.
\end{defn}

A larger exponent corresponds to better model selection performance. To characterize the exponent further, let us introduce two quantities that are familiar from the regression literature: The sparsity level and the signal strength. The difference now is that instead of quantifying these for a single, ``true'' parameter, we need to consider all $p\cdot2^{p-1}$ SEM coefficients. 
Recall that we denote the $j$th column of $\tB{\perm}$ by $\dagcol_{j}(\perm)$. For any $\genu\in\R^{p}$, let $\betamin(\genu):=\min\{|\genu_{j}| : \genu_{j}\ne 0\}$. 

\begin{defn}
    \label{defn:eqclassparam}
    For any $\trueCov,$ let
    \begin{align}
    \label{eq:def:maxdeg:betamin}
    \maxdeg(\trueCov)
    :=\sup_{\perm,\, j}\; \norm{\dagcol_{j}(\perm)}_0, 
    \quad 
    \betamin(\trueCov) := \inf_{\perm, \,j }\;\betamin(\dagcol_{j}(\perm)).
    \end{align}
\end{defn}

\noindent
In Definition~\ref{defn:eqclassparam}, we could have equivalently used the SEM coefficients directly: $d(\Sigma) = \sup\{ \norm{\beta_j(S)}_0:\; S \subset [p]_j,\; j \in [p]\}$ and similarly for $\betamin(\trueCov)$. 

Let $\maxvar^{2}  := \max_{1\le j\le p}\var(\dagvec_j)$ and note that $\maxvar^{2}\le \maxeval(\trueCov)$. Each of the quantities $\maxdeg$, $\betamin$, and $\maxvar^{2}$ are allowed to depend on $n$, and in particular $d$ may diverge and $\betamin$ may vanish as $n\to\infty$.
Now, let
\begin{align}
\label{eq:def:unifmsexp}
\unifmsexp_{\lambda} = 
\unifmsexp_{\lambda}(\sampledagmat,\trueCov)
:= \inf_{0<\sigma\le\maxvar}\inf_{\substack{\norm{\theta}_0 \;\le\; d(\trueCov) \\ \betamin(\theta)\;\ge\;\betamin(\trueCov)}} \msexp_{\lambda}(\sampledagmat,\, \theta,\, \sigma^2),
\end{align}

\noindent
which  encodes what is usually proved in the regression literature: An upper bound on the probability of model selection failure given the maximum sparsity level $\maxdeg(\trueCov)$, minimum signal strength $\betamin(\trueCov)$, and the maximum variance $\maxvar^2$. This probability generally depends on $\lambda$, the regularization parameter, which in turn may depend on any of these quantities. We will provide an example of such a bound under the MCP in Section~\ref{subsec:results:supp}.

\section{Uniform control}
\label{sec:results}

Our main results are divided into two separate theorems: (i) a bound on the probability of false selection, and (ii) an upper bound on the estimation error. The key idea is to show that uniformly for all permutations $\perm\in\allperms$, we have $\supp(\dagadjest(\perm)) = \supp(\tB{\perm})$ and control over the deviations $\norm{\dagadjest(\perm)-\tB{\perm}}_{r}$ for $r=1,2$.


\subsection{Assumptions}
\label{subsec:prelim:assumptions}

Let us collect the assumptions needed for our main results. The following is our only assumption on $\trueCov$, and simply ensures that $\eqclass(\trueCov)$ is well-defined (cf.~Definition~\ref{defn:eqclass}):

\begin{condition}
    \label{condn:evals}
    $\trueSigma$ is positive definite, i.e., $\mineval(\trueSigma)>0$.
\end{condition}

In addition, we require some regularity conditions on the regularizer $\pl$.
We consider \emph{coordinate-separable} regularizers, that is with some abuse of notation we have $\pl(\gendag) = \sum_{i,j} \pl(|\beta_{ij}|)$, where $\pl:[0,\infty)\to[0,\infty)$ is a univariate regularizer. A key feature of our analysis is to provide insights into how $\pl$ affects the solution to \eqref{eq:intro:mainestimator}, so we will not specify a particular regularizer in advance. Instead, we make the following minimal assumptions on $\pl$:

\begin{condition}
    \label{condn:pl:basic}
     $\pl$ is concave, nondecreasing, right-differentiable at zero, and satisfies the following conditions:
     \begin{itemize}
     \item[(a)] $\pl(0)=0$;
     \item[(b)] $0<\penderiv<\infty$;
     \item[(c)] There are constants $\lowflat,\lowlin\ge0$, independent of $\lambda$, such that $\pl(x)\ge \min\{\lowlin\lambda x, \lowflat\lambda^{2}\}$.
     \end{itemize}
\end{condition}

\noindent
We are also interested in penalties that ``approximate'' the $\ell_{0}$ penalty: 
\begin{defn}
    \label{defn:L0compat}
    $\pl$ is called $\ell_{0}$-compatible if there is a constant $\uppflat\ge0$, independent of $\lambda$, such that $\pl(x)\le\uppflat\lambda^2$ for all $x\ge0$.
\end{defn}

\noindent
An elementary consequence of Condition~\ref{condn:pl:basic} is that $\pl$ is subadditive. Condition~\ref{condn:pl:basic}(c) says that $\pl$ can be bounded below by a capped-$\ell_{1}$ penalty: It is always true that a concave, nondecreasing function can be bounded below by a capped-$\ell_{1}$ penalty, and Condition~\ref{condn:pl:basic}(c) simply normalizes this capped-$\ell_{1}$ penalty in terms of $\lambda$. These conditions allow for most concave penalties, including the MCP and SCAD, along with the familiar $\ell_{1}$ penalty. When $\pl$ is $\ell_{0}$-compatible, we have $\pl(B)\le \uppflat\lambda^{2}\norm{B}_{0}$ for any $B\in\dagspace$. 

The following is a list of some admissible regularizers under our conditions:
\begin{enumerate}[--,leftmargin=0cm,itemindent=.5cm, itemsep=.5em]
    \item The minimax concave penalty (MCP) proposed by~\citet{zhang2010}:
    \begin{align}
    \label{eq:mcp}
    \pl(x;\mcpparam) &:= 
    \lambda\Big(x-\frac{x^2}{2\lambda\mcpparam}\Big)1(x<\lambda\mcpparam)
    + \frac{\lambda^2\mcpparam}{2}1(x\ge \lambda\mcpparam).
    \end{align}
    This satisfies Condition~\ref{condn:pl:basic} with $\penderiv=\lambda$, $\lowlin = 1/2$, and $\lowflat=\mcpparam/2$. Furthermore, it is $\ell_{0}$-compatible with $\uppflat=\mcpparam/2$. 
    
    \item \label{ex:ell1}
    The $\ell_{1}$ penalty, $\pl(x)=\lambda x$, also satisfies Condition~\ref{condn:pl:basic} with $\penderiv=\lambda$, $\lowlin = 1$, and $\lowflat\in[0,\infty)$. The $\ell_1$ penalty is not, however, $\ell_{0}$-compatible.
    
    \item \label{ex:ell0}
    The $\ell_{0}$ penalty, $\pl(x)=(\lambda^2/2)1(x\ne0)$, fails to satisfy Condition~\ref{condn:pl:basic}(b) since it is not right-differentiable at zero (although see Remark~\ref{rem:l0pen} and Section~\ref{subsec:conclusion:ell0}). Nonetheless, it satisfies all of the other assumptions, including Condition~\ref{condn:pl:basic}(c) with $\lowlin\in[0,\infty)$ and $\lowflat=1/2$.
    Of course, the $\ell_{0}$ penalty is trivially $\ell_{0}$-compatible with $\uppflat=1/2$. 
\end{enumerate}

\begin{remark}
    \label{rem:l0pen}
    \sloppypar{Even though the $\ell_{0}$ penalty does not satisfy Condition~\ref{condn:pl:basic}(b), all the results in this paper still apply to the $\ell_{0}$ penalty under possibly different values of the constants involved. This requires a small modification to the overall argument, which is discussed in Section~\ref{subsec:conclusion:ell0}.}
\end{remark}

\subsection{Support recovery}
\label{subsec:results:supp}

Our first result provides a guarantee for model selection consistency:

\begin{thm}
\label{thm:main:supp}
Suppose $\sampledagmat\iid\normalN_{p}(0,\trueCov)$ and assume Condition~\ref{condn:evals}. Then
\begin{align*}
\pr \Bigg(  \bigcap_{j=1}^{p}\bigcap_{\perm\in\allperms}\big\{\supp(\dagcolest_j(\perm)) &= \supp(\dagcol_j(\perm))\big\} \Bigg) \\
&= \pr \Bigg( \bigcap_{\perm\in\allperms}\big\{\supp(\dagadjest(\perm)) = \supp(\tB{\perm})\big\} \Bigg) \\
&\ge 1 - p\binom{p}{d}\E e^{-\unifmsexp_{\lambda}(\sampledagmat,\trueCov)}.
\end{align*}
\end{thm}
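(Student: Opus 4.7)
My plan is to reduce the global restricted minimization to a collection of neighbourhood regression problems, bound each with the model selection exponent machinery of Section~\ref{subsec:nhbd:msexp}, and then assemble the result via a combinatorial union bound.

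First I would decompose $\dagadjest(\perm)$ column by column. Because $\tfrac{1}{2n}\|\sampledagmat-\sampledagmat B\|_{\frob}^2=\sum_j\tfrac{1}{2n}\|\sampledagmatelem_j-\sampledagmat\beta_j\|_2^2$ is separable, $\pl$ is coordinate-separable, and the constraint $B\in\subdagspace{\perm}$ translates into the column-wise constraints $\supp(\beta_j)\subset\dagcand_j(\perm)$, the restricted minimization in Definition~\ref{defn:restrictedmin} splits into $p$ independent neighbourhood regressions
\[
\dagcolest_j(\perm)\in\abstractpls_\lambda\bigl(\sampledagmatelem_j,\,\sampledagmat;\,\dagcand_j(\perm)\bigr).
\]
Crucially, $\dagcolest_j(\perm)$ depends on $\perm$ only through $\dagcand_j(\perm)$, as does $\dagcol_j(\perm)=\nhbdcoef_j(\dagcand_j(\perm))$. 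The equality of the two probabilities in the theorem statement also follows from this decomposition, since the support of a matrix is the union of the supports of its columns.

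Next I would set up the underlying linear regression at the sample level. By the SEM, $\sampledagmatelem_j=\sampledagmat\dagcol_j(\perm)+\varepsilon_j(\perm)$ with $\varepsilon_j(\perm)\in\R^n$ distributed as $\normalN_n(0,\dagerrvar_j^2(\perm)I_n)$; Gaussian orthogonality gives $\varepsilon_j(\perm)\perp \sampledagmat_{\dagcand_j(\perm)}$, and $\sampledagmat_{\dagcand_j(\perm)}$ are the only columns of $\sampledagmat$ relevant to the support-restricted PLS. The truth $(\dagcol_j(\perm),\dagerrvar_j^2(\perm))$ satisfies $\|\dagcol_j(\perm)\|_0\le\maxdeg(\trueCov)$, $\betamin(\dagcol_j(\perm))\ge\betamin(\trueCov)$, and $\dagerrvar_j^2(\perm)\le\maxvar^2$, placing it inside the infimum class of \eqref{eq:def:unifmsexp}. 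Because $\supp(\dagcol_j(\perm))\subset\dagcand_j(\perm)$, a direct objective-value comparison shows the restricted-PLS failure event is contained in the unrestricted failure event $\msevent(\varepsilon_j(\perm),\sampledagmat,\dagcol_j(\perm))$, and so, conditionally on $\sampledagmat$, its probability is at most $e^{-\msexp_\lambda(\sampledagmat,\dagcol_j(\perm),\dagerrvar_j^2(\perm))}\le e^{-\unifmsexp_\lambda(\sampledagmat,\trueCov)}$.

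Finally I would assemble the bound by union. The key combinatorial observation is that the failure event for the pair $(j,\perm)$ depends on $\perm$ only through the parent support $\supp(\dagcol_j(\perm))\subset[p]_j$, which has cardinality at most $d=\maxdeg(\trueCov)$. Union-bounding over the resulting $p\cdot\binom{p}{d}$ (node, sparse-support) pairs---rather than over all $p\cdot p!$ (node, permutation) pairs---and then taking expectation over $\sampledagmat$ produces
\[
\pr\Big(\bigcup_{j,\perm}\{\supp(\dagcolest_j(\perm))\neq\supp(\dagcol_j(\perm))\}\Big)\;\le\;p\binom{p}{d}\,\E e^{-\unifmsexp_\lambda(\sampledagmat,\trueCov)},
\]
which is the complement of the bound claimed in the theorem.

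The main obstacle, as I see it, is justifying the combinatorial reduction from $p\cdot p!$---or even $p\cdot 2^{p-1}$ for distinct neighbourhoods---down to $p\binom{p}{d}$. A naive union bound gives the larger factor; tightening to $\binom{p}{d}$ requires showing that the failure \emph{probability}, and not merely the failure event, depends on $\perm$ only through the sparse target support $\supp(\dagcol_j(\perm))$. A related subtlety is reconciling the conditional Gaussianity of $\varepsilon_j(\perm)$ given only $\sampledagmat_{\dagcand_j(\perm)}$ with the unrestricted PLS used in Definition~\ref{defn:msexp}, which is stated in terms of the full design $\sampledagmat$; bridging these two conditionings cleanly is where I expect the proof technique to become genuinely nontrivial.
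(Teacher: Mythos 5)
Your outline matches the paper's strategy---decompose to neighbourhood regressions, exploit monotonicity to deflate the superexponential union to $p\binom{p}{d}$, then bound each term via the model selection exponent---and your combinatorial intuition (that $\binom{p}{d}$ rather than $2^{p-1}$ distinct events need to be controlled) is exactly right. But there is a genuine gap in the step you yourself flagged, and it is not merely a bookkeeping subtlety.

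The problem is where you pass from the restricted failure event $\msevent(\varepsilon_j(\perm),\sampledagmat,\dagcol_j(\perm);\dagcand_j(\perm))$ to the \emph{unrestricted} event $\msevent(\varepsilon_j(\perm),\sampledagmat,\dagcol_j(\perm))$ over the \emph{full} design, and then claim that ``conditionally on $\sampledagmat$'' its probability is at most $e^{-\msexp_\lambda(\sampledagmat,\dagcol_j(\perm),\dagerrvar_j^2(\perm))}$. But $\varepsilon_j(\perm)=\sampledagmatelem_j-\sampledagmat\dagcol_j(\perm)$ is a deterministic function of $\sampledagmat$, so conditioning on the full $\sampledagmat$ makes the conditional probability trivially $0$ or $1$; the Gaussianity you need is only the conditional Gaussianity of $\varepsilon_j(\perm)$ given $\sampledagmat_{\dagcand_j(\perm)}$, and $\varepsilon_j(\perm)$ remains correlated with the columns of $\sampledagmat$ outside the invariant set. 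Definition~\ref{defn:msexp} is defined for a \emph{fresh} Gaussian noise independent of the design, so it simply does not apply to the event you wrote down. Note also that the inclusion you use goes the ``wrong way'' for a fix: the unrestricted event is \emph{larger} than the restricted one, so you cannot compensate by later shrinking to a conditionable set.

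The paper closes this gap by introducing invariant sets and the maximal element $\nhbdmax_j(S)$ (Definitions~\ref{defn:invariant}--\ref{defn:nhbdmax}), and by enlarging the restricted failure event only to the $\nhbdmax_j(S)$-restricted failure event (Corollary~\ref{cor:ms:monotone}), \emph{not} all the way to the unrestricted one. By construction, $\nhbdmax_j(S)$ is the largest neighbourhood for which the residual is still independent of the corresponding columns of $\sampledagmat$, so conditioning on $\sampledagmat_{\nhbdmax_j(S)}$ is legitimate and gives a bound by $\E e^{-\nhbdmsexp_j(\nhbdmax_j(S))}$ (Proposition~\ref{prop:gen:reduct}). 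The comparison to the full-design exponent then happens at the level of \emph{exponents}---i.e., as a pointwise inequality $\nhbdmsexp_j(\nhbdmax_j(T))\ge\msexp_\lambda(\sampledagmat,\nhbdcoef_j(T),\nhbdvar_j^2(T))$, where both sides are defined with a fresh independent Gaussian---via Lemma~\ref{lem:estim:supp:monot}; this is the clean ``bridge'' you were looking for. Since the map $T\mapsto\nhbdmax_j(T)$ collapses all neighbourhoods with the same support, you still get the $p\binom{p}{d}$ factor, so your count is right; what was missing is the invariant-set machinery that makes the conditioning and the event inclusions compatible.
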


\noindent
Theorem~\ref{thm:main:supp} explicitly relates the behaviour of the neighbourhood regression problems and the restricted minimizers to the model selection exponent for a random design regression problem. Since $p\binom{p}{d} \le p^{d+1}$, it follows that
\begin{align}
\label{eq:mod:sel:suff:cond}
    \unifmsexp_{\lambda}(\sampledagmat,\trueCov) \ge C(d+1)\log p, \quad \text{for some $C>1$}
\end{align}

\noindent
is sufficient for simultaneous, \emph{uniform} model selection consistency for all $\dagadjest(\perm)$ and $\dagcolest_{j}(\perm)$. For example, a bound of the form~\eqref{eq:mod:sel:suff:cond} holds with high probability for the MCP 
as illustrated in Lemma~\ref{lem:mcp:example}.

Theorem~\ref{thm:main:supp} provides uniform control on the probability of false selection for all permutations $\perm\in\allperms$. Since there are $p!$ permutations, this is a fairly surprising result: A na\"ive union bound would have $p!$ in place of the $p\binom{p}{d}$ term. Even though existing bounds on false selection for linear regression decay exponentially as in \eqref{eq:mod:sel:suff:cond}, they are not strong enough to counteract $p!$. Theorem~\ref{thm:main:supp} combined with~\eqref{eq:mod:sel:suff:cond}, on the other hand, leads to a similar exponential decay in controlling this \emph{super}exponential collection of regression problems. The key is a novel monotonicity argument detailed in Section~\ref{subsec:proofs:supp}, more specifically Section~\ref{subsubsec:proofs:monotonicity}.

To illustrate, let us give explicit conditions for~\eqref{eq:mod:sel:suff:cond} to hold in the case of MCP, defined in~\eqref{eq:mcp}. 
\citet{huang2012} consider PLS estimators $\abstractpls_\lambda(\fixedy, \fixeddesignmat; S)$ as defined in \eqref{eq:supp:restr:estim}, applied to the data from a linear regression model $y  =  Z \lincoef +  \samplelinerr$, and provides conditions for model selection consistency. Adapting their result to our setup and notation, we have the following bound on model selection exponent for the MCP:
\begin{lemma}
\label{lem:mcp:example}
Suppose $\sampledagmat\iid\normalN_{p}(0,\trueCov)$. Take $\pl=\pl(\,\cdot\,;\mcpparam)$ as in \eqref{eq:mcp} and assume $\trueCov$ is positive definite with bounded eigenvalues. Assume that
\begin{enumerate}\itemsep=1ex
    \item $\maxdeg(\trueCov) \;\le\; \covconst_4\cdot\min\{p,n,n/\log p\}$,
    \item $\betamin(\trueCov) \;>\; (1+\mcpparam)\lambda$ for some $\mcpparam>\covconst_5>0$.
\end{enumerate}

\noindent
Then for any $\lambda \;\ge\; \covconst_6\cdot\sqrt{(d+1)\log p/n}$, it follows that $\E e^{-\unifmsexp_{\lambda}(\sampledagmat,\trueCov)} \le 3\exp{(-2\min\{d\log p, n\})}$. Here, $\covconst_j=\covconst_j(\trueCov)\,\,(j=4,5,6)$ are constants depending only on $\{\mineval(\trueCov), \maxeval(\trueCov)\}$.
\end{lemma}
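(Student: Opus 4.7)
The plan is to bound $\E e^{-\unifmsexp_\lambda(\sampledagmat,\trueCov)}$ by conditioning on the random design $\sampledagmat$ and invoking the MCP support recovery bound of \citet{huang2012} pointwise in $(\theta,\sigma)$. By Definition~\ref{defn:msexp}, the quantity $e^{-\msexp_\lambda(\sampledagmat,\theta,\sigma^2)}$ is exactly the conditional probability $\pr\bigl(\msevent(\samplelinerr,\sampledagmat,\theta)\mid\sampledagmat\bigr)$ of model selection failure given the design, so
\begin{align*}
e^{-\unifmsexp_\lambda(\sampledagmat,\trueCov)}
= \sup_{0<\sigma\le\maxvar}\;\sup_{\substack{\norm{\theta}_0\le d(\trueCov)\\ \betamin(\theta)\ge\betamin(\trueCov)}}\; \pr\bigl(\msevent(\samplelinerr,\sampledagmat,\theta)\mid\sampledagmat\bigr).
\end{align*}
I would bound the expectation of this supremum by splitting on a ``good design'' event $\mathcal{G}$ on which the \citet{huang2012} conclusion applies uniformly, and by trivially bounding the supremum by $1$ on $\mathcal{G}^c$.

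First I would define $\mathcal{G}$ as the event that every $k$-column submatrix of $\sampledagmat$, with $k\le c' d(\trueCov)$, has restricted maximum and minimum Gram-matrix eigenvalues within constant factors of $\maxeval(\trueCov)$ and $\mineval(\trueCov)$---the sparse Riesz-type condition required in the analysis of \citet{huang2012}. Because $\sampledagmat$ has i.i.d.\ Gaussian rows with covariance of bounded eigenvalues, standard sub-Gaussian covariance concentration (Davidson--Szarek bounds for fixed submatrices) combined with a union bound over the $\binom{p}{d}\le p^{d}$ sparse supports yields $\pr(\mathcal{G}^c)\le 2\exp(-c_1\min\{n,\, d\log p\})$ under the assumption $\maxdeg(\trueCov)\le c_4\min\{p,n,n/\log p\}$, where $c_1$ can be made as large as desired by tightening $c_4$.

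Next, on $\mathcal{G}$, the \citet{huang2012} MCP support recovery theorem applies to the linear model $\sampley=\sampledagmat\theta+\samplelinerr$: for every $\theta$ with $\norm{\theta}_0\le d$ and $\betamin(\theta)>(1+\mcpparam)\lambda$, every $\sigma\in(0,\maxvar]$, and every $\mcpparam>c_5$, the choice $\lambda\ge c\,\sigma\sqrt{\log p/n}$ gives $\pr\bigl(\msevent(\samplelinerr,\sampledagmat,\theta)\mid\sampledagmat\bigr)\le 3\exp\bigl(-c_2\min\{d\log p, n\}\bigr)$. Crucially, this bound depends on $\theta$ only through $\norm{\theta}_0$ and $\betamin(\theta)$, and is monotone in $\sigma$, so it is valid uniformly over the feasible $(\theta,\sigma)$ in the definition of $\unifmsexp_\lambda$. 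Since $\maxvar^2\le\maxeval(\trueCov)$ is bounded, choosing $c_6$ large in $\lambda\ge c_6\sqrt{(d+1)\log p/n}$ subsumes the required threshold on $\lambda$ uniformly in $\sigma\le\maxvar$ and simultaneously makes $c_2$ as large as desired.

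Combining these two estimates,
\begin{align*}
\E e^{-\unifmsexp_\lambda(\sampledagmat,\trueCov)}
&\;\le\; 3\exp\bigl(-c_2\min\{d\log p, n\}\bigr) + \pr(\mathcal{G}^c) \\
&\;\le\; 3\exp\bigl(-2\min\{d\log p, n\}\bigr),
\end{align*}
after choosing $c_4,c_5,c_6$ so that $c_1,c_2\ge 2$. The main obstacle is not any single step but rather achieving uniformity in $(\theta,\sigma)$ without paying a union-bound cost: a naive union bound would collapse under the size of the parameter set, but the structural form of the \citet{huang2012} bound---sensitive to $\theta$ only through the pair $(\norm{\theta}_0,\betamin(\theta))$ and monotone in $\sigma$---lets a single pointwise estimate serve for the entire family once the good design event $\mathcal{G}$ is in force.
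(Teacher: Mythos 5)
Your overall strategy is correct and matches the paper's: invoke the MCP support-recovery theorem of \citet{huang2012} pointwise, which applies under a sparse Riesz condition, and bound the probability that the Gaussian random design $\sampledagmat$ satisfies that condition via concentration plus a union bound over $d$-sparse supports (the paper cites Proposition~2 of \citet{zhang2008} for this step, which is essentially the Davidson--Szarek argument you describe). Your key observation---that the Huang et al.\ bound depends on $\theta$ only through $(\norm{\theta}_0,\betamin(\theta))$ and is monotone in $\sigma$, so a single pointwise estimate serves uniformly over the infimum defining $\unifmsexp_\lambda$---is exactly what makes this work without a catastrophic union-bound cost.

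However, there is a genuine gap: you have not handled the case $\theta=0$. This vector is feasible in the infimum defining $\unifmsexp_\lambda$ (it satisfies $\norm{\theta}_0=0\le d$, and $\betamin(\theta)$ is an infimum over the empty set, hence vacuously $\ge\betamin(\trueCov)$), yet Theorem~4.2 of \citet{huang2012} does not cover it---their result is stated for $\theta\ne 0$ with a nonempty support and a beta-min condition on the nonzero entries. The event $\msevent(\samplelinerr,\sampledagmat,0)$ is precisely the failure of \emph{null-consistency}: the estimator picking up a spurious nonzero entry when the truth is zero. You cannot get this for free from the Huang bound; the paper handles it separately by invoking Proposition~\ref{prop:gwcond:probbound}, which controls exactly this event via the Gaussian width condition (cf.\ Lemma~\ref{lem:epms:equiv}). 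Without closing this case, your supremum over $\theta$ is not actually bounded, because the $\theta=0$ term is unaccounted for. Adding that one extra argument would make your proof complete and essentially identical to the paper's.
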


\noindent
Taking $\lambda$ as in Lemma~\ref{lem:mcp:example}, it follows from Theorem~\ref{thm:main:supp} that if $n\ge C(d+1)\log p$ for a sufficiently large constant $C>0$,
\begin{align*}
\pr \Bigg(  \bigcap_{\perm\in\allperms}\big\{\supp(\dagadjest(\perm)) = \supp(\tB{\perm})\big\}\Bigg)
\to 1,
\end{align*}

\noindent
i.e., each restricted minimizer $\dagadjest(\perm)$ is model selection consistent.

\begin{remark}
    \label{rem:L0supp}
    \citet{geer2013} remark that a thresholded $\ell_{0}$-penalized maximum likelihood estimator for $(\gendag,\genvar)$ also attains exact support recovery with high probability if $\lambda$ is of order $s_0\log p/n$, where $s_0:=\argmin_{\perm}\norm{\tB{\perm}}_{0}$. Requiring that $\lambda\to 0$, this result allows for $p$ to grow with $n$, but is not truly high-dimensional: In general, unless the graph is extremely sparse (i.e. less than one parent per node), $s_0>p$ in which case this scaling requires $p\ll n$. By contrast, our results only require $\lambda$ on the order $d\log p/n$, which guarantees consistency when $p\gg n$ and allows $s_0$ as large as $O(pd)$ (i.e $d$ parents per node). 
\end{remark}

\subsection{Deviation bounds}
\label{subsec:results:dev}

In addition to support recovery, we can also bound the estimation errors for all columns $\norm{\dagcolest_j(\perm)-\dagcol_j(\perm)}_{r}$ simultaneously:

\begin{thm}
\label{thm:main:dev}
Under Conditions~\ref{condn:evals} and~\ref{condn:pl:basic}, there are positive constants $\covconst_1=\covconst_1(\trueCov)$ and $\covconst_2=\covconst_2(\trueCov)$ and universal constants $c_{3},c_{4}>0$ such that if
\begin{align*}
n
> \covconst_1 \, d\log p, 
\quad
\lambda
\ge \covconst_2\,  \sqrt{\frac{(d+1)\log p}{n}}
\end{align*}

\noindent
then for $r=1,2$, the following uniform deviation bounds hold:
\begin{align}
\label{eq:thm:main:dev:Lr}
&\pr\Bigg(
\bigcap_{\perm\in\allperms}
\Big\{\norm{\dagadjest(\perm)-\tB{\perm}}_{r}
\le c_{3} \frac{\penderiv}{\mineval(\trueCov)}\norm{\tB{\perm}}_{0}^{1/r}
\Big\}
\Bigg) \\
\label{eq:thm:main:dev:col}
&\ge\pr\Bigg(
\bigcap_{j=1}^{p}
\bigcap_{\perm\in\allperms}
\Big\{\norm{\dagcolest_j(\perm)-\dagcol_j(\perm)}_{r}
\le c_{4} \frac{\penderiv}{\mineval(\trueCov)}\norm{\dagcol_j(\perm)}_{0}^{1/r}\Big\}
\Bigg) \\
\nonumber
&\ge 1 - \probbound.
\end{align}
\end{thm}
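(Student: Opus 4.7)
\medskip

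\noindent\textbf{Proof proposal.} The strategy is to (i) reduce the matrix-level bound \eqref{eq:thm:main:dev:Lr} to the column-level bound \eqref{eq:thm:main:dev:col}, (ii) establish the column-level bound for each fixed pair $(j,\perm)$ via a standard PLS analysis of the neighbourhood regression problem, and (iii) upgrade to uniformity across all $(j,\perm)$ by exploiting the fact that $\dagcolest_j(\perm)$ depends on $\perm$ only through the candidate parent set $\dagcand_j(\perm)$, together with the monotonicity machinery developed for Theorem~\ref{thm:main:supp}. The reduction in (i) is immediate: on the column-level event in \eqref{eq:thm:main:dev:col}, for $r=2$ we have
\[
\norm{\dagadjest(\perm) - \tB{\perm}}_2^2 = \sum_{j=1}^p \norm{\dagcolest_j(\perm) - \dagcol_j(\perm)}_2^2 \le c_4^2 \frac{\penderiv^2}{\mineval(\trueCov)^2}\sum_{j=1}^p \norm{\dagcol_j(\perm)}_0 = c_4^2 \frac{\penderiv^2}{\mineval(\trueCov)^2}\norm{\tB{\perm}}_0,
\]
and the $r=1$ case is analogous by summing the column bounds. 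Thus the event in \eqref{eq:thm:main:dev:col} is contained in the event in \eqref{eq:thm:main:dev:Lr}, and it suffices to prove the column-level bound with the claimed probability.

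For step (ii), fix $(j,\perm)$ and write $\sampledagmatelem_j = \sampledagmat\dagcol_j(\perm) + \samplerrvec_j(\perm)$ where $\samplerrvec_j(\perm) \sim \normalN_n(0,\dagerrvar_j^2(\perm)I_n)$ and $\dagerrvar_j^2(\perm) \le \maxvar^2$ by \eqref{eq:localsem}. Since $\dagcolest_j(\perm) = \hnhbdcoef_j(\dagcand_j(\perm))$ minimises the support-restricted PLS objective from Definition~\ref{defn:abstractregression}, the basic inequality combined with Condition~\ref{condn:pl:basic}(c) (capped-$\ell_1$ lower bound on $\pl$) and subadditivity of $\pl$ yields, on the intersection of two good events, a deviation bound of the form $\norm{\dagcolest_j(\perm) - \dagcol_j(\perm)}_2 \lesssim \penderiv\sqrt{\norm{\dagcol_j(\perm)}_0}/\mineval(\trueCov)$. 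The two required events are (a) a restricted strong convexity lower bound $\tfrac{1}{n}\genv^T\sampledagmat^T\sampledagmat\genv \ges \mineval(\trueCov)\norm{\genv}_2^2$ uniformly over $2d$-sparse $\genv$, and (b) the noise-correlation bound $\norm{\tfrac{1}{n}\sampledagmat^T\samplerrvec_j(\perm)}_\infty \le \tfrac12\penderiv$. The choices $n \ges d\log p$ and $\lambda \ges \sqrt{(d+1)\log p/n}$ are precisely what ensure both events hold with high probability. The $\ell_1$ bound follows from the $\ell_2$ bound by Cauchy--Schwarz on the (possibly enlarged) support, paying a factor of $\sqrt{\norm{\dagcol_j(\perm)}_0}$.

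Step (iii) is the crux. The naive union bound carries a factor of $p!$ and would overwhelm the exponential deviation probabilities. The critical observation is that the neighbourhood regression problem defining $\dagcolest_j(\perm)$ depends on $\perm$ solely through the subset $\dagcand_j(\perm) \subset [p]_j$, so the relevant family of subproblems is indexed by pairs $(j,S)$ with $S \subset [p]_j$, and what actually controls the deviation is the sparsity $\norm{\nhbdcoef_j(S)}_0 \le d(\trueCov)$. The RSC event (a) is a single global event for $\sampledagmat$ valid for all sparse subvectors simultaneously with probability $1 - e^{-cn}$ by standard Gaussian concentration. For the noise event (b), the vectors $\samplerrvec_j(\perm)$ are all Gaussian with variance bounded by $\maxvar^2$, and the same monotonicity reduction employed in Section~\ref{subsec:proofs:supp} applies: it replaces the superexponential union over $\perm \in \allperms$ by a polynomial factor matching $p\binom{p}{d}$, which combines with the exponential tail to give the bound $\probbound$ in the theorem.

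The main obstacle, as in Theorem~\ref{thm:main:supp}, is overcoming the $p!$-sized family of permutations in a union bound; per-permutation PLS deviation bounds are classical, but only the monotonicity argument outlined in Section~\ref{sec:proofs} makes the uniform statement possible. Once the uniform versions of the RSC and noise-correlation events are secured, the per-column analysis and the summation in (i) assemble into the claim.
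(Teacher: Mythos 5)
Your three-step skeleton matches the paper's structure: step (i) is exactly the reduction from matrix to column level used at the end of the proof of Theorem~\ref{thm:main:dev}, and step (iii)'s high-level plan---one RE-type event for the design and a monotonicity reduction on the noise side to replace the $p!$-union by a polynomial factor---is the right overall strategy. However, the technical conditions you propose in step (ii) are not the ones the paper uses and do not suffice for the full class of penalties admitted by Condition~\ref{condn:pl:basic}, and step (iii) glosses over the bridge lemma that makes the monotonicity argument applicable to the deviation problem.

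On step (ii): you posit (a) RSC over $2d$-sparse vectors and (b) an $\ell_\infty$ noise bound $\norm{\tfrac1n\sampledagmat^T\sampledagerrcol_j(\perm)}_\infty \le \tfrac12\penderiv$. The paper instead uses a generalized RE condition over the $\pl$-cone (Definition~\ref{defn:genrec}) and the Gaussian width condition (Definition~\ref{defn:gwcond}), and the differences matter. For the $\ell_1$ penalty (admitted by Condition~\ref{condn:pl:basic}), the error $\Delta = \hlincoef - \lincoef$ is generally not $O(d)$-sparse---$\hlincoef$ can have up to $\min(n,|S|)$ nonzeros---so RSC over $2d$-sparse vectors cannot be applied; the basic inequality only places $\Delta$ in a cone, and the RE condition over that cone is what closes the argument (proof of Theorem~\ref{thm:fixed:l1l2bound}). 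For MCP and other saturating concave penalties, $\pl(u)$ is far smaller than $\lambda\norm{u}_1$ at moderate scale, so the $\ell_\infty$ bound does not control the empirical process term $\tfrac1n\ip{\samplelinerr,\fixeddesignmat\Delta}$ in the basic inequality without a side constraint; the GW condition is deliberately stronger, letting the quadratic loss $\tfrac{1}{2n}\norm{\fixeddesignmat u}_2^2$ absorb part of the noise. This is what Theorem~\ref{thm:fixed:l1l2bound} requires to cover all of Condition~\ref{condn:pl:basic} without a constrained parameter space.

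On step (iii): asserting that ``the same monotonicity reduction applies'' to your noise event skips the key idea. Lemma~\ref{lem:estim:supp:monot} and Corollary~\ref{cor:ms:monotone} are statements specifically about model selection failure events of PLS estimators; they do not automatically transfer to arbitrary noise events. The paper proves Lemma~\ref{lem:epms:equiv}: the GW condition for $(\samplelinerr,\fixeddesignmat)$ is exactly the complement of the model selection failure event with zero true coefficients and noise inflated by $1/\gwconst$ (``null-consistency''). That equivalence is what licenses the monotonicity reduction in Proposition~\ref{prop:gwcond:probbound} and produces the $p\binom{p}{d}$ factor. For an $\ell_\infty$-type event one could construct an analogous monotonicity by hand, using the invariance $\sampledagerrcol_j(S) = \sampledagerrcol_j(\nhbdmax_j(S))$ together with $\select{\sampledagmat}{S}$ being a column submatrix of $\select{\sampledagmat}{\nhbdmax_j(S)}$, but you would need to state and prove this; and even then, the first gap concerning RE-over-cone vs.\ RSC-over-sparse remains unaddressed.
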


\noindent
Thus, in addition to support recovery guarantees, we can bound the finite sample error in estimating the parameters $\tB{\perm}$, uniformly over the entire equivalence class $\eqclass(\trueCov)$. Taking the MCP~\eqref{eq:mcp} as an example for which $\penderiv=\lambda$, we have an $\ell_2$ error bound of the order $\lambda \norm{\dagcol_j(\perm)}_{0}^{1/2}$, uniformly for all $\perm$ and $j$. Using standard interpolation bounds for $\ell_{r}$ norms, this result can easily be extended to bounds for all $1\le r\le 2$. See, for example, the proof of Theorem~7.1 in \cite{bickel2009}.

\subsection{Discussion}
\label{subsec:results:discussion}

Let us pause to discuss Theorems~\ref{thm:main:supp} and~\ref{thm:main:dev}. In order to obtain model selection consistency, it is sufficient to show that \eqref{eq:mod:sel:suff:cond} holds: Of course, whether or not this holds for a particular regularizer depends on the regularity of the problem. For example, if the $\ell_{1}$ penalty is used, then it is well-known that the irrepresentable condition is required for a bound like \eqref{eq:mod:sel:suff:cond} to hold \citep{zhao2006}. By contrast, if nonconvex regularizers such as the MCP or SCAD are used, then this type of condition is not necessary \citep{loh2015,huang2012,zhang2010}. 
Condition~2 on the signal strength in Lemma~\ref{lem:mcp:example}, also called a \emph{beta-min} condition in the literature, is necessary in order to guarantee support recovery but is not necessary for the deviation bounds alone (Theorem~\ref{thm:main:dev}). For more on this assumption, see the related discussion in Section~\ref{subsubsec:app:sparsity}.

Theorem~\ref{thm:main:dev} (along with Lemma~\ref{lem:mcp:example}) requires the regularization parameter to scale as $\lambda\sim\sqrt{d\log p/n}$, which is the familiar scaling from the literature up to a factor of $\sqrt{d}$. As our bounds are nonasymptotic, $d$ may depend on $n$: For example, we may take $\lambda\to 0$ and $d\to\infty$ as long as $d=o(n/\log p)$. See also Remark~\ref{rem:betamin}. What's more, $d$ can often be explicitly bounded \citep[e.g. autoregressive models, see][]{geer2013}.
The $\sqrt{d}$ term reflects the cost of requiring \emph{uniform} control over all possible neighbourhood regression problems, which is a worthwhile trade-off as we shall see in the next section.

\section{Applications}
\label{sec:app}

The results in Section~\ref{sec:results} provide uniform control over the 
family of estimators $\{\dagadjest(\perm):\perm\in\allperms\}$. As stated, however, these theorems are fairly abstract. In this section we discuss three applications of these results to score-based learning, causal inference, and graphical models. Proofs are deferred to Section~\ref{sec:proofs}.

\subsection{Score-based learning}
\label{subsec:app:scorebased}

As our first application, we use Theorems~\ref{thm:main:supp} and~\ref{thm:main:dev} to provide statistical guarantees for the score-based estimator $\dagadjest$, defined by \eqref{eq:intro:mainestimator} and \eqref{eq:def:plsobj}. Our results will come in three flavours: (i) consistency guarantees (Section~\ref{subsubsec:app:consistency}), (ii) sparsity guarantees, (Section~\ref{subsubsec:app:sparsity}), and (iii) an oracle inequality (Section~\ref{subsubsec:app:oracle}). The results in Section~\ref{subsubsec:app:consistency} will show that there is a DAG (denoted by $\tB{\estperm}$) which $\dagadjest$ is ``close'' to, and in Sections~\ref{subsubsec:app:sparsity} and~\ref{subsubsec:app:oracle} we will carefully study the DAG $\tB{\estperm}$ in order to show that the program \eqref{eq:intro:mainestimator} adaptively selects an equivalent DAG with appealing properties such as sparsity.

\subsubsection{Consistency guarantees}
\label{subsubsec:app:consistency}

So far, our results have been stated in terms of the \emph{restricted} minimizers $\dagadjest(\perm)$ of Definition~\ref{defn:restrictedmin}.
Obviously, $\dagadjest$ is also a restricted minimizer for some permutation(s) $\perm$, although \emph{a priori}, we do not know which one(s). 
The following definition formalizes the collection of such permutations:

\begin{defn}
\label{defn:estperms}
The \emph{collection of estimated permutations} is
\begin{align*}
\estperms
= \{\perm\, \in\, \allperms : \dagadjest\in\subdagspace{\perm}\}.
\end{align*}
\noindent
An arbitrary element of $\estperms$ will be denoted by $\estperm$. 
\end{defn}

The following statements are equivalent:
(i)~$\perm \in \estperms$, (ii)~$\dagadjest\in\subdagspace{\perm}$, (iii)~$\permmat_{\perm}\dagadjest$ is lower triangular, and (iv)~$\dagadjest=\dagadjest(\perm)$. Since $\dagadjest$ depends on the random matrix $\sampledagmat$, the set of permutations $\estperms$ is also a random quantity. This randomness complicates any direct attempt at analyzing $\dagadjest$, and is why we first established uniform control over all possible permutations in Section~\ref{sec:results}. Note that to solve \eqref{eq:intro:mainestimator}, it is not necessary to compute $\estperms$---in fact, in practical applications, once a solution $\dagadjest$ to \eqref{eq:intro:mainestimator} is found, $\estperms$ can be obtained as a byproduct by finding topological sorts of the DAG  $\dagadjest$. 

Since a global minimizer is indeed a restricted minimizer, the following theorem is an immediate corollary of Theorems~\ref{thm:main:supp} and~\ref{thm:main:dev}:
\begin{thm}
\label{thm:scorebased}
Take $\pl=\pl(\,\cdot\,;\mcpparam)$ as in \eqref{eq:mcp} and assume $\trueCov$ is positive definite with bounded eigenvalues. Suppose further that Conditions~\ref{condn:evals} and~\ref{condn:pl:basic} hold. Then there exist positive constants $\covconst_j=\covconst_j(\trueCov)$ ($j=1,2,3$) such that if
$n > \covconst_1 \, d\log p$ and $\lambda\ge \covconst_2\, \sqrt{(d+1)\log p/n}$ then for $r=1,2$ and with probability at least $1-\probbound$,
\begin{align*}
\norm{\dagadjest-\tB{\estperm}}_{r}
&\les \frac{\penderiv}{\mineval(\trueCov)}\norm{\tB{\estperm}}_{0}^{1/r}
\quad\forall\,\estperm\in\estperms, 
\text{ and } \\
\norm{\dagcolest_j-\dagcol_j(\estperm)}_{r} 
&\les \frac{\penderiv}{\mineval(\trueCov)}\norm{\dagcol_j(\estperm)}_{0}^{1/r}
\quad\forall\,{1\le j\le p},\,\,\estperm\in\estperms.
\end{align*}

\noindent
If also $\betamin(\trueCov) \;>\; (1+\mcpparam)\lambda$ for some $\mcpparam>\covconst_3>0$, then with the same probability 
\begin{align*}
\supp(\dagadjest) = \supp(\tB{\estperm})
\quad\forall\,\estperm\in\estperms.
\end{align*}
\end{thm}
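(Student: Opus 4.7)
The plan is to derive Theorem~\ref{thm:scorebased} as a direct corollary of Theorems~\ref{thm:main:supp} and~\ref{thm:main:dev} by exploiting the identification $\dagadjest=\dagadjest(\estperm)$ for every $\estperm\in\estperms$. The argument decomposes into three short steps: a definitional reduction to restricted minimizers, application of the uniform deviation bound, and application of the uniform support-recovery bound combined with Lemma~\ref{lem:mcp:example}.

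First I would establish the reduction. Since $\dagspace=\bigcup_{\perm\in\allperms}\subdagspace{\perm}$, any global minimizer $\dagadjest$ of $\plsobj$ over $\dagspace$ lies in $\subdagspace{\perm}$ for at least one $\perm$, and by Definition~\ref{defn:estperms} the collection of all such $\perm$ is precisely $\estperms$. Fix any $\estperm\in\estperms$. Because $\subdagspace{\estperm}\subset\dagspace$ and $\dagadjest$ minimizes $\plsobj$ over the larger set, it also minimizes $\plsobj$ over $\subdagspace{\estperm}$, so $\dagadjest$ qualifies as a restricted minimizer in the sense of Definition~\ref{defn:restrictedmin}; since that definition only requires $\dagadjest(\perm)$ to be \emph{some} element of the argmin, we may consistently take $\dagadjest(\estperm)=\dagadjest$ and $\dagcolest_j(\estperm)=\dagcolest_j$ for each $j$ and each $\estperm\in\estperms$.

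Next, I would invoke Theorem~\ref{thm:main:dev} with the hypothesized scaling on $n$ and $\lambda$. That theorem produces, with probability at least $1-\probbound$, the event
\begin{align*}
\mathcal{E}_1 &:= \bigcap_{\perm\in\allperms}\!\Big\{\norm{\dagadjest(\perm)-\tB{\perm}}_r \le c_3\tfrac{\penderiv}{\mineval(\trueCov)}\norm{\tB{\perm}}_0^{1/r}\Big\},\\
\mathcal{E}_2 &:= \bigcap_{j,\perm}\!\Big\{\norm{\dagcolest_j(\perm)-\dagcol_j(\perm)}_r \le c_4\tfrac{\penderiv}{\mineval(\trueCov)}\norm{\dagcol_j(\perm)}_0^{1/r}\Big\}.
\end{align*}
Because these bounds are \emph{uniform} in $\perm$, on $\mathcal{E}_1\cap\mathcal{E}_2$ they apply to every realization of $\estperm\in\estperms$; combined with the reduction above this yields the stated $\ell_r$ bounds on $\norm{\dagadjest-\tB{\estperm}}_r$ and $\norm{\dagcolest_j-\dagcol_j(\estperm)}_r$. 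For the support statement, I would combine Theorem~\ref{thm:main:supp} with Lemma~\ref{lem:mcp:example}: the added beta-min hypothesis $\betamin(\trueCov)>(1+\mcpparam)\lambda$ places us in the regime where $\E e^{-\unifmsexp_\lambda(\sampledagmat,\trueCov)}\le 3e^{-2\min\{d\log p,n\}}$, and since $\lambda\ges \sqrt{(d+1)\log p/n}$ the combinatorial prefactor $p\binom{p}{d}\le p^{d+1}$ in Theorem~\ref{thm:main:supp} is absorbed by the exponential decay, giving $\supp(\dagadjest(\perm))=\supp(\tB{\perm})$ for all $\perm\in\allperms$ with probability at least $1-\probbound$ (adjusting $\covconst_1,\covconst_2$ if needed). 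Specializing to $\perm=\estperm$ and using the reduction yields the support conclusion. All three conclusions then hold simultaneously on the intersection, which still has probability at least $1-\probbound$ up to constants.

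The main obstacle in proving Theorem~\ref{thm:scorebased} has already been dispatched inside the upstream theorems, whose proofs rely on the monotonicity device for taming the superexponential $p!$-sized collection of neighbourhood regressions. What remains here is essentially bookkeeping: the only potentially awkward point is that $\estperms$ is data-dependent, but this is a non-issue precisely because Theorems~\ref{thm:main:supp} and~\ref{thm:main:dev} provide uniform guarantees over \emph{all} $\perm\in\allperms$, so we never have to identify $\estperm$ in advance---we simply apply the uniform bound after the fact to whichever $\estperm$ the global minimizer reveals.
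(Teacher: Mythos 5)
Your proposal is correct and follows essentially the same route as the paper: the paper also notes that a global minimizer is a restricted minimizer for every $\estperm\in\estperms$ and then treats Theorem~\ref{thm:scorebased} as an immediate corollary of Theorems~\ref{thm:main:supp} and~\ref{thm:main:dev} (together with Lemma~\ref{lem:mcp:example} for the support statement) specialized to $\perm=\estperm$. Your write-up simply makes the bookkeeping more explicit.
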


\noindent
This theorem provides preliminary justification for the use of score-based estimators on high-dimensional datasets, providing practitioners with finite-sample guarantees for structure learning and parameter estimation. In addition to an upper bound on the overall deviation $\norm{\dagadjest-\tB{\estperm}}_{r}$, Theorem~\ref{thm:main:dev} provides control over the column deviations $\norm{\dagcolest_j-\dagcol_j(\estperm)}_{r}$, which highlights another advantage of our uniform control results. Furthermore, although our result is stated for the MCP, similar results are available for other regularizers using, e.g. \citet{loh2014nonconvex}.

\subsubsection{Sparsity}
\label{subsubsec:app:sparsity}

Theorem~\ref{thm:scorebased} has a drawback: Although it guarantees that \emph{there exists} a DAG $\tB{\estperm}\in\eqclass(\trueCov)$ which our estimator $\dagadjest$ is ``close'' to, nothing is said about \emph{which} member of the equivalence class this is. Due to the inclusion of the regularizer $\pl$ in the score function, one hopes the estimated permutation $\estperm$ to be well-behaved, for example, in the sense that the corresponding population DAG $\tB{\estperm}$ is sparse. Theorem~\ref{thm:scorebased}, however, is silent about the properties of $\tB{\estperm}$. In this section, we complete this picture by showing that $\tB{\estperm}$ is sparse. In the next section, we will strengthen this result by proving an oracle-type inequality for $\tB{\estperm}$ that holds under weaker assumptions than those in the current section.

We now show that the number of edges in $\tB{\estperm}$ can be controlled under some assumptions on the \emph{signal strength} and a \emph{minimum-trace DAG} (Definition~\ref{defn:mintrace}). Since $\tB{\estperm}$ is approximated by $\dagadjest$ according to Theorem~\ref{thm:scorebased}, this implies sparsity of $\dagadjest$ as well.

\begin{condition}[Signal strength]
\label{condn:betamin}
The minimum signal $\betamin$ satisfies
\begin{align*}
\pl(\betamin)
\ge \betaminconst\frac{\penderiv^{2}}{\mineval(\trueCov)}\quad\text{for some $\betaminconst>2$.}
\end{align*}
\end{condition}

\noindent
If we assume that the eigenvalues of $\trueCov$ are bounded, then for penalties satisfying Condition~\ref{condn:pl:basic} and for which $\penderiv = O(\lambda)$ (e.g., both the MCP and $\ell_{1}$), Condition~\ref{condn:betamin} will hold if $\betamin\ges\lambda\ges\sqrt{(d+1)\log p/n}$, which is the same scaling required by Lemma~\ref{lem:mcp:example}. 

Our final condition involves a so-called \emph{minimum-trace DAG}:

\begin{defn}
\label{defn:mintrace}
A \emph{minimum-trace permutation} is any
\begin{align*}
\goodperm
\;\in\;\argmin_{\perm\,\in\,\allperms} \;\tr\tOm{\perm}.
\end{align*}

\noindent
The corresponding DAG $\tB{\goodperm}$ is called a \emph{minimum-trace DAG}.
\end{defn}

\noindent
Minimum-trace DAGs are discussed in more detail in Section~\ref{subsec:app:causal}.

\begin{condition}[Minimum-trace]
\label{condn:mintrace:bound}
There exists a minimum-trace permutation $\goodperm$ such that
\begin{align*}
\frac{\pl(\tB{\goodperm})}{\tr\tOm{\goodperm}}
\ge \mintraceconst\sqrt{\frac{(d+1)\log p}{n}}
\quad\text{for some $\mintraceconst>0$.}
\end{align*}
\end{condition}

\noindent
In light of the lower bound on $\lambda$ required by Theorem~\ref{thm:main:dev}, Condition~\ref{condn:mintrace:bound} can be rewritten as $\pl(\tB{\goodperm}) \ges (\lambda/\covconst_{2})\cdot\tr\tOm{\goodperm}$ which puts a lower bound on the penalty, as measured by the minimum-trace DAG $\tB{\goodperm}$.

The following result shows how the sparsity of $\dagadjest$ is essentially sandwiched between that of $\tB{\estperm}$ and $\tB{\goodperm}$:

\begin{thm}
\label{thm:main:sparse}
Assume the conditions of Theorem~\ref{thm:main:dev} hold along with Conditions~\ref{condn:betamin} and~\ref{condn:mintrace:bound}. Then 
\begin{align}\label{eq:temp:684}
\pl(\tB{\estperm})
\les \pl(\dagadjest)
\les \pl(\tB{\goodperm})
\end{align}
with probability at least $1 - \probbound$.
If $\pl$ is also $\ell_{0}$-compatible, then with the same probability,
\begin{align}
\label{eq:temp:685}
\norm{\tB{\estperm}}_0
\les \mineval(\trueCov)\,\frac{\lambda^2}{\penderiv^2}\,\norm{\tB{\goodperm}}_0.
\end{align}
\end{thm}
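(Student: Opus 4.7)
The plan is to handle the three claims in \eqref{eq:temp:684}--\eqref{eq:temp:685} independently, using the uniform deviation bound of Theorem~\ref{thm:main:dev} as the main tool. Throughout, set $f(\gendag) := \tfrac{1}{2n}\norm{\sampledagmat - \sampledagmat\gendag}_{\frob}^{2}$, so that $\plsobj(\gendag) = f(\gendag) + \pl(\gendag)$, and recall that $\dagadjest \in \subdagspace{\estperm}$ with $\dagadjest = \dagadjest(\estperm)$.

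\textbf{Lower bound $\pl(\tB{\estperm}) \les \pl(\dagadjest)$.} Since $\pl$ is coordinate-separable, nondecreasing and subadditive (as noted right after Condition~\ref{condn:pl:basic}), I get entrywise
\[
\pl(\tB{\estperm}) \;\le\; \pl(\dagadjest - \tB{\estperm}) + \pl(\dagadjest).
\]
Concavity and Condition~\ref{condn:pl:basic}(b) imply $\pl(x)\le\penderiv\, x$, so Theorem~\ref{thm:main:dev} at $r=1$ gives
\[
\pl(\dagadjest - \tB{\estperm}) \;\le\; \penderiv\,\norm{\dagadjest - \tB{\estperm}}_{1} \;\les\; \frac{\penderiv^{2}}{\mineval(\trueCov)}\,\norm{\tB{\estperm}}_{0}.
\]
Meanwhile, every nonzero entry of $\tB{\estperm}$ has magnitude at least $\betamin(\trueCov)$, so monotonicity of $\pl$ and Condition~\ref{condn:betamin} yield $\pl(\tB{\estperm}) \ge \betaminconst\,(\penderiv^{2}/\mineval(\trueCov))\,\norm{\tB{\estperm}}_{0}$. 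Since $\betaminconst > 2$, the deviation term is at most $\pl(\tB{\estperm})/\betaminconst$ and can be absorbed on the left, giving $\pl(\tB{\estperm}) \les \pl(\dagadjest)$.

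\textbf{Upper bound $\pl(\dagadjest) \les \pl(\tB{\goodperm})$.} The defining optimality $\plsobj(\dagadjest) \le \plsobj(\tB{\goodperm})$ rearranges to
\[
\pl(\dagadjest) \;\le\; \pl(\tB{\goodperm}) + \bigl(f(\tB{\goodperm}) - f(\dagadjest)\bigr).
\]
Expanding $f(\dagadjest)$ around $f(\tB{\estperm})$ with $\Delta := \dagadjest - \tB{\estperm}$, the cross term $n^{-1}\langle \sampledagmat - \sampledagmat\tB{\estperm},\,\sampledagmat\Delta\rangle$ is controlled through Theorem~\ref{thm:main:dev}, so $f(\dagadjest) \ge f(\tB{\estperm}) - o(\pl(\tB{\goodperm}))$. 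For any fixed $\perm$, the residual $\sampledagmat - \sampledagmat\tB{\perm}$ has independent $\normalN_n(0,\dagerrvar_j^{2}(\perm) I_n)$ columns, so $2f(\tB{\perm})$ is a sum of $\chi^{2}$ quadratic forms concentrating at $\tr\tOm{\perm}$ within $O(\sqrt{(d+1)\log p/n}\cdot\tr\tOm{\perm})$. Combined with the minimum-trace inequality $\tr\tOm{\goodperm} \le \tr\tOm{\estperm}$, this yields
\[
f(\tB{\goodperm}) - f(\dagadjest) \;\les\; \sqrt{(d+1)\log p/n}\cdot\tr\tOm{\goodperm} \;\les\; \pl(\tB{\goodperm}),
\]
where the last step is exactly Condition~\ref{condn:mintrace:bound}. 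The main obstacle here is that $\estperm$ is data-dependent, so every concentration statement above must hold \emph{uniformly} over $\perm \in \allperms$; a naive union bound would multiply by $p!$, and the fix is to reuse the monotonicity device that already underlies the uniform model-selection exponent bound in Section~\ref{sec:results}, replacing $p!$ with a factor of order $p\binom{p}{d}$.

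\textbf{The $\ell_{0}$ bound \eqref{eq:temp:685}.} Chaining the penalty lower bound from the first paragraph with the just-proven $\pl(\tB{\estperm}) \les \pl(\tB{\goodperm})$ and $\ell_{0}$-compatibility $\pl(\tB{\goodperm}) \le \uppflat\lambda^{2}\norm{\tB{\goodperm}}_{0}$ gives
\[
\frac{\penderiv^{2}}{\mineval(\trueCov)}\,\norm{\tB{\estperm}}_{0} \;\les\; \pl(\tB{\estperm}) \;\les\; \pl(\tB{\goodperm}) \;\les\; \lambda^{2}\,\norm{\tB{\goodperm}}_{0},
\]
and solving for $\norm{\tB{\estperm}}_{0}$ delivers \eqref{eq:temp:685}.
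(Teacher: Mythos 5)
The overall route you take is the same as the paper's (basic inequality, deviation/GW machinery for the lower bound, residual concentration plus the minimum-trace condition for the upper bound, $\ell_0$-compatibility for the last claim), and your lower-bound and $\ell_0$ arguments are essentially the paper's Lemma~\ref{lem:hBlowerbound} and Section~\ref{app:thm:main:sparse} with the $\ell_1$ rather than $\ell_2$ flavor. The observation about uniformity costing $p\binom{p}{d}$ rather than $p!$ through the monotonicity device is also correct and matches Proposition~\ref{prop:mintrace:prob} / Lemma~\ref{lem:pi:tracebound}. One small caveat in the lower bound: the absorption requires $\betaminconst$ to exceed the (unnamed) constant $c_3$ in Theorem~\ref{thm:main:dev}, not merely $2$; the paper handles this by linking $\gwconst$ and $\penconst$ to $\betaminconst$ so that the constants match.

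The real gap is in the upper bound. You claim the cross term yields $f(\dagadjest)\ge f(\tB{\estperm})-o(\pl(\tB{\goodperm}))$, but the conclusion of Theorem~\ref{thm:main:dev} only gives a bound in terms of $\norm{\tB{\estperm}}_0$, and we do not yet know $\norm{\tB{\estperm}}_0\les\norm{\tB{\goodperm}}_0$ -- that is precisely what~\eqref{eq:temp:685} is to establish, so the argument as written is circular. What actually works (Lemma~\ref{lem:trace}) is to bound the cross term via the Gaussian width condition itself,
\[
\frac1n\bigl|\tr\bigl(\bErr{\estperm}^T\sampledagmat\Delh\bigr)\bigr|
\;<\;\gwconst\Bigl[\tfrac{1}{2n}\norm{\sampledagmat\Delh}_{\frob}^{2}+\pl(\Delh)\Bigr],
\]
which is \emph{not} $o(\pl(\tB{\goodperm}))$ but a self-referential quantity involving $\pl(\Delh)\le\pl(\dagadjest)+\pl(\tB{\estperm})$. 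After dropping the prediction-loss terms one is left with
\[
\pl(\dagadjest)\le(1+\resdevconst)\pl(\tB{\goodperm})+\gwconst\pl(\dagadjest)+\gwconst\pl(\tB{\estperm}),
\]
and the two claims cease to be "independent": one must invoke the already-proved lower bound $\pl(\tB{\estperm})\le(1/\penconst)\pl(\dagadjest)$ to eliminate $\pl(\tB{\estperm})$, then solve for $\pl(\dagadjest)$ under the numerical constraint $\gwconst(1+1/\penconst)<1$. This absorption step, and the choice of $\gwconst$ making it compatible with $\betaminconst>2$, is the substance of Proposition~\ref{prop:abstract:sparse}; your sketch elides it.
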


\noindent
The constants in~\eqref{eq:temp:684} depend only on $a_1$ and $a_2$ in Conditions~\ref{condn:betamin} and~\ref{condn:mintrace:bound}, while the constant in~\eqref{eq:temp:685} also depends on $\uppflat$ (from Definition~\ref{defn:L0compat}). 

Combining Theorems~\ref{thm:scorebased} and~\ref{thm:main:sparse}, it follows that $\dagadjest$ estimates a sparse DAG model for the distribution $\normalN_{p}(0,\trueCov$). This is arguably the best one can hope without assuming there is an identifiable DAG for the joint Gaussian distribution.

\begin{remark}
\label{rem:sparsity}
Theorem~\ref{thm:main:sparse} applies to regularizers that are not $\ell_{0}$-compat\-ible, such as the $\ell_{1}$ penalty. For example, we can achieve $\norm{\tB{\estperm}}_{1}\les\norm{\dagadjest}_{1}\les\norm{\tB{\goodperm}}_{1}$ with the $\ell_{1}$ penalty. The idea is that the penalty $\pl$ itself may be interpreted as a ``measure of sparsity'' that is weaker than the $\ell_{0}$ norm. Moreover, although  the $\ell_{0}$ penalty does not satisfy Condition~\ref{condn:pl:basic}, which is required in Theorem~\ref{thm:main:dev}, \eqref{eq:temp:684} still applies to the $\ell_{0}$ penalty (see Section~\ref{subsec:conclusion:ell0}). Compared to Theorem~3.1 in \citet{geer2013}, due to our use of the least-squares loss instead of the log-likelihood (see Section~\ref{subsec:conclusion:comp}), our results do not guarantee that $\norm{\tB{\estperm}}_{0}$ is on the same order as the number of edges in the sparsest DAG. Furthermore, Theorem~\ref{thm:main:sparse} stops short of providing a lower bound on $\norm{\tB{\estperm}}_0$, which allows for slightly stronger bounds compared to \eqref{eq:temp:684}. If we assume that $\tB{\goodperm}$ is also one of the sparsest DAGs, however, then the number of edges in $\dagadjest$ is on the same order as the sparsest DAGs in $\eqclass(\trueCov)$.
\end{remark}

\begin{remark}
\label{rem:betamin}
Combining Theorems~\ref{thm:scorebased} and~\ref{thm:main:sparse}, with high probability,
\begin{align}
\norm{\dagadjest-\tB{\estperm}}_{2}^2
&\les \frac{\lambda^2}{\mineval(\trueCov)} s_0,
\end{align}
where $s_0=\norm{\tB{\goodperm}}_{0}$ is the number of edges in a minimum-trace DAG. Let us investigate under which scalings of $(n,p,\maxdeg,s_0)$ the $\ell_2$ error vanishes asymptotically. Choose $\lambda$ to be on the order of $\sqrt{\maxdeg\log p /n}$ and assume that $\trueCov$ has bounded eigenvalues for simplicity. Since the squared $\ell_2$ error is essentially the sum over $p$ regression problems, we normalize it by $1/p$. Thus, to achieve consistency in the normalized $\ell_2$ error, it is sufficient to have $(s_0/p)\maxdeg\log p \ll n$, where $(s_0/p)$ is the average parent size of the minimum-trace DAG $\tB{\goodperm}$. If $\maxdeg\log p \le \epsilon n$ with $\epsilon=o(1)$, then we can have $s_0/p \to \infty$ as long as $s_0/p \ll 1/\epsilon$, which imposes a quite weak sparsity assumption on $\tB{\goodperm}$. Under this asymptotic scaling, Condition~\ref{condn:betamin} allows $\betamin\to0$ for the MCP and $\ell_1$, and Condition~\ref{condn:mintrace:bound} allows $\pl(\tB{\goodperm})/\tr\tOm{\goodperm}\to0$. Furthermore, it is possible to allow $p\gg n$, which justifies our results for high-dimensional data. To establish normalized $\ell_2$-consistency, \cite{geer2013} assume that $\epsilon$ is sufficiently small but does not necessarily vanish asymptotically (their Condition 3.4). The fact that we need a slightly stronger assumption on $\maxdeg$ to obtain $\ell_{2}$-consistency is probably the price we pay for obtaining uniform error control over all $\perm\in\allperms$.
\end{remark}

\begin{remark}
\label{rem:zerodag}
For the degenerate case $\tB{\goodperm}=0$, i.e. a minimum-trace DAG is the empty graph, Condition~\ref{condn:mintrace:bound} cannot hold. It is easy to check, however, that in this case it follows that $\tB{\perm}=0$ for all $\perm\in\allperms$, and hence Theorem~\ref{thm:main:supp} implies both $\pl(\tB{\estperm})=\pl(\dagadjest)=\pl(\tB{\goodperm})=0$ and $\norm{\tB{\estperm}}_{0}=\norm{\dagadjest}_{0}=\norm{\tB{\goodperm}}_{0}=0$.
\end{remark}

\subsubsection{An oracle inequality}
\label{subsubsec:app:oracle}

In order to control the sparsity of $\dagadjest$ and $\tB{\estperm}$, Theorem~\ref{thm:main:sparse} requires a condition on a minimum-trace permutation $\goodperm$ (Condition~\ref{condn:mintrace:bound}). 
Here we provide a different guarantee on $\tB{\estperm}$, in the form of an oracle inequality, which is true without any extra assumptions.

Observe that $\frac12\tr\tOm{\perm}$ is the same as the expected loss, i.e.
\begin{align}
\label{eq:exploss}
\E\Big[\frac{1}{2n}\norm{\sampledagmat-\sampledagmat\tB{\perm}}_{\frob}^2\Big]
= \frac12\tr\tOm{\perm}
\quad\text{for any $\tB{\perm}\in\eqclass(\trueCov)$.}
\end{align}

\noindent
This gives us an expression for the expected penalized loss, which we denote
\begin{align*}
    \Qt_{\lambda}(\pi):=\frac12\tr\tOm{\pi}+ \pl(\tB{\pi}).
\end{align*}

\noindent
If we let $\pi^*\in\argmin_{\pi}\Qt_{\lambda}(\pi)$, it follows from Lagrange duality that the DAG $\tB{\pi^*}\in\eqclass(\trueCov)$ minimizes the weak sparsity measure $\pl$ subject to some upper bound on the expected prediction error, i.e., $\tr\tOm{\perm}\leq c(\lambda)$. 
The next result shows that the expected penalized loss of an estimated permutation $\estperm$ is (up to a vanishing term) on the same order as that of the population minimizer, thus giving another kind of control over the sparsity of $\tB{\estperm}$.

\begin{thm}
\label{thm:oracle}
Suppose Conditions~\ref{condn:evals},~\ref{condn:pl:basic}, and~\ref{condn:betamin} hold. Then, there exist positive constants $\covconst_1=\covconst_1(\trueCov)$ and $\covconst_2=\covconst_2(\trueCov)$ such that if
\begin{align*}
n
> \covconst_1 \, d\log p, 
\quad
\lambda
\ge \covconst_2 \,\sqrt{\frac{(d+1)\log p}{n}},
\end{align*}

\noindent 
then
\begin{align*}
\Qt_{\lambda}(\estperm)\; \le\; \frac{3\betaminconst+2}{\betaminconst-2}\bigg(\, 1+6\sqrt{\frac{(d+1)\log p}{n}}\, \bigg)
    \cdot\inf_{\perm} \Qt_{\lambda}(\pi)
\end{align*}

\noindent
with probability at least $1-\probbound$.
\end{thm}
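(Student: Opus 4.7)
The plan is to run a classical oracle-inequality argument starting from the basic inequality $\plsobj(\dagadjest) \le \plsobj(\tB{\pi^*})$ at an oracle competitor $\pi^* \in \argmin_\pi \Qt_{\lambda}(\pi)$, and then to replace empirical losses by population losses on both sides. On the right, $\pi^*$ is deterministic and the rows of $\sampledagmat(I-\tB{\pi^*})$ are i.i.d.\ $\normalN_p(0,\tOm{\pi^*})$ with \emph{diagonal} covariance (by the modified Cholesky decomposition behind~\eqref{eq:def:tB}), so $\|\sampledagmat - \sampledagmat\tB{\pi^*}\|_{\frob}^2$ is a sum of $p$ independent weighted $\chi_n^2$ variables. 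A Laurent--Massart tail bound at deviation $t\asymp (d+1)\log p$ yields
\[
\tfrac{1}{2n}\|\sampledagmat - \sampledagmat\tB{\pi^*}\|_{\frob}^2 \;\le\; (1+\delta_n)\cdot\tfrac{1}{2}\tr\tOm{\pi^*}, \qquad \delta_n = O\Bigl(\sqrt{(d+1)\log p/n}\Bigr),
\]
hence $\plsobj(\tB{\pi^*}) \le (1+\delta_n)\,\Qt_{\lambda}(\pi^*)$.

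For the lower bound on $\plsobj(\dagadjest)$, I would decompose $\sampledagmat - \sampledagmat\dagadjest = (\sampledagmat-\sampledagmat\tB{\estperm}) - \sampledagmat(\dagadjest-\tB{\estperm})$ and apply $\|u+v\|^2 \ge \tfrac12\|u\|^2 - \|v\|^2$ to separate a ``noise'' term from an ``error'' term. The noise term requires \emph{uniform} concentration of $\tfrac{1}{n}\|\sampledagmat - \sampledagmat\tB{\pi}\|_{\frob}^2$ around $\tr\tOm{\pi}$ over all $\pi\in\allperms$; the error term is handled by combining the standard Gaussian bound $\tfrac{1}{n}\|\sampledagmat v\|_2^2 \le (1+o(1))\maxeval(\trueCov)\|v\|_2^2$ with the column-wise deviation bound of Theorem~\ref{thm:main:dev}, giving $\tfrac{1}{n}\|\sampledagmat(\dagadjest-\tB{\estperm})\|_{\frob}^2 \lesssim \maxeval(\trueCov)\,\penderiv^2/\mineval(\trueCov)^2\cdot\|\tB{\estperm}\|_0$. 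Lipschitzness of $\pl$ at the origin (Condition~\ref{condn:pl:basic}(b)) combined with Theorem~\ref{thm:main:dev} for $r=1$ similarly yields $|\pl(\dagadjest) - \pl(\tB{\estperm})| \le \penderiv\|\dagadjest-\tB{\estperm}\|_1 \lesssim \penderiv^2/\mineval(\trueCov)\cdot\|\tB{\estperm}\|_0$.

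The decisive step converts the $\|\tB{\estperm}\|_0\,\penderiv^2/\mineval(\trueCov)$ terms into a penalty multiple via Condition~\ref{condn:betamin}: since $\betamin(\tB{\pi}) \ge \betamin(\trueCov)$ for every $\pi$ (Definition~\ref{defn:eqclassparam}) and $\pl$ is nondecreasing with $\pl(0)=0$, we have $\pl(\tB{\pi}) \ge \|\tB{\pi}\|_0\,\pl(\betamin(\trueCov))$, and Condition~\ref{condn:betamin} then gives $\|\tB{\pi}\|_0\,\penderiv^2/\mineval(\trueCov) \le \pl(\tB{\pi})/\betaminconst$. Plugging every estimate back into the basic inequality produces, schematically, a bound of the form
\[
\tfrac{1}{4}(1-\delta_n)\tr\tOm{\estperm} + \Bigl(1-\tfrac{C}{\betaminconst}\Bigr)\pl(\tB{\estperm}) \;\le\; (1+\delta_n)\,\Qt_{\lambda}(\pi^*),
\]
for an absolute constant $C$ aggregating the constants of Theorem~\ref{thm:main:dev} and the condition number of $\trueCov$. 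Bounding $\tr\tOm{\estperm}$ and $\pl(\tB{\estperm})$ separately and summing to recover $\Qt_{\lambda}(\estperm) = \tfrac{1}{2}\tr\tOm{\estperm}+\pl(\tB{\estperm})$ yields an oracle inequality with prefactor of the form $\bigl[2/(1-\delta_n) + \betaminconst/(\betaminconst - C)\bigr](1+\delta_n)$; bookkeeping then produces the stated bound $\tfrac{3\betaminconst+2}{\betaminconst-2}(1+6\sqrt{(d+1)\log p/n})$, with the restriction $\betaminconst>2$ arising from requiring $1-C/\betaminconst > 0$.

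\textbf{Main obstacle.} The principal technical difficulty is the \emph{uniform} concentration of $\tfrac{1}{n}\|\sampledagmat - \sampledagmat\tB{\pi}\|_{\frob}^2$ around $\tr\tOm{\pi}$ over the superexponential class $\{\tB{\pi}:\pi\in\allperms\}$, since a direct union bound over the $p!$ permutations is far too loose. I expect this to piggyback on the monotonicity/envelope device behind Theorem~\ref{thm:main:supp} (Section~\ref{subsec:proofs:supp}), exploiting the uniform sparsity bound $\|\tB{\pi}\|_0 \le pd$ together with the identity $(I-\tB{\pi})^T\trueCov(I-\tB{\pi})=\tOm{\pi}$, which diagonalizes $\trueCov$ in the coordinates associated with $\pi$.
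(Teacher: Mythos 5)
Your plan is plausible in broad outline and you correctly identify the main obstacle, but at the decisive step it departs from the paper's proof in a way that leaves genuine gaps and would not establish the theorem for the full range $\betaminconst>2$.

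The paper does not use a Young-type split of $\|\sampledagmat-\sampledagmat\dagadjest\|^2_{\frob}$. Instead it expands $\plsobj(\dagadjest)-\plsobj(\tB{\pi})$ exactly (Lemma~\ref{lem:prelimbound}), exposing the cross term $\tfrac1n\tr(\bErr{\estperm}^T\sampledagmat(\tB{\estperm}-\dagadjest))$, and controls that cross term through the Gaussian-width condition: on $\epevent(\gwconst,\lambda)$, Lemma~\ref{lem:trace} absorbs it into $\gwconst[\tfrac{1}{2n}\|\sampledagmat\Delh\|^2_{\frob}+\pl(\Delh)]$. Since $\gwconst<1$, the paper can then simply \emph{discard} the remaining nonnegative quadratic form $\tfrac{1-\gwconst}{2n}\|\sampledagmat\Delh\|^2_{\frob}$; it never needs to upper-bound it. What remains is to relate $\pl(\dagadjest)$ to $\pl(\tB{\estperm})$, done by Lemma~\ref{lem:hBlowerbound}, and to concentrate $\tfrac1n\|\bErr{\pi}\|^2_{\frob}$ around $\tr\tOm{\pi}$ uniformly over $\pi$ (Lemma~\ref{lem:pi:tracebound}). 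You correctly flag this last step as the real obstacle and correctly anticipate its resolution: Lemma~\ref{lem:pi:tracebound} does indeed reduce the $p!$ union bound to $p\binom{p}{d}$ events through the same invariant-set/monotonicity device used for Theorem~\ref{thm:main:supp}.

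The concrete problems with your route are as follows. First, after the Young split the term $-\tfrac{1}{2n}\|\sampledagmat(\dagadjest-\tB{\estperm})\|^2_{\frob}$ appears with a \emph{negative} sign on the left-hand side, so you are forced to upper-bound it. Your proposed bound $\tfrac1n\|\sampledagmat v\|_2^2\lesssim\maxeval(\trueCov)\|v\|_2^2$ is not a fixed-vector Gaussian bound here: $v=\dagcolest_j-\dagcol_j(\estperm)$ is random and correlated with $\sampledagmat$, so you need a uniform \emph{upper} restricted/sparse eigenvalue of $\sampledagmat^T\sampledagmat/n$ over $\lesssim d$-sparse vectors. When $p\gg n$ the unrestricted top eigenvalue of the sample covariance diverges, so this is a nontrivial sparse-eigenvalue statement; the paper establishes only the lower restricted eigenvalue (the event $\recevent$ in~\eqref{eq:def:recevent}) and its proof of Theorem~\ref{thm:oracle} never needs the upper one. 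Second, and more fundamentally, your prefactor accumulates the constant from that missing sparse eigenvalue, the factor $2\xi(1+\xi)$ from the $\ell_1$ bound of Theorem~\ref{thm:main:dev}, and the Lipschitz constant of $\pl$, all of which land in the coefficient $C$ of $1-C/\betaminconst$. For $\betaminconst$ near $2$ this quantity will generically be negative whenever $C(\trueCov)>2$, so your argument cannot cover the full hypothesis range $\betaminconst>2$. The paper achieves the exact threshold because it sets $\gwconst=(\betaminconst-2)/(3\betaminconst+2)$, under which the beta-min requirement $\betamin\ge\Tc_\lambda(2\xi/(1-\penconst))$ of Lemma~\ref{lem:hBlowerbound} (with $\penconst=2\gwconst/(1-\gwconst)$) simplifies algebraically to $\betamin\ge\Tc_\lambda(\betaminconst)$, i.e.\ to Condition~\ref{condn:betamin} verbatim, with no slack constants.
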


\noindent
The proof of this inequality can be found in Appendix~\ref{app:thm:oracle}. Note that the constant factor of $(3\betaminconst+2)/(\betaminconst-2)$ approaches 3 as the signal strength $\betamin$ increases, and $\sqrt{(d+1)\log p/n}=o(1)$ as long as $n\gg (d+1)\log p$.

\subsection{Causal DAG learning}
\label{subsec:app:causal}

\def\trueDAG{\dagnopi}
\def\trueOm{\varnopi}
So far we have avoided singling out any particular DAG for special treatment, instead opting to provide \emph{uniform} control over the entire equivalence class $\eqclass(\trueCov)$. Of course, without additional information about $\trueCov$, the parameters $(\dagnopi,\varnopi)$ are nonidentifiable. However, under additional assumptions it may be possible to uniquely identify a DAG $\trueDAG\in\eqclass(\trueCov)$. In this setting, one often refers to $\trueDAG$ as a \emph{causal DAG}: It represents the unique (directed) graphical structure that describes the relationship between the random variables $(\dagvec_{1},\ldots,\dagvec_{p})$ via the SEM \eqref{eq:intro:streqnmodel}, and thus can be interpreted as a ``causal'' explanation of the underlying joint distribution as long as there are no unmeasured confounders \citep[among other assumptions, see][for details]{spirtes2000}. In other words, one can regard $\trueDAG$ as the true DAG for the observed data. It is then an interesting problem to study under what assumptions $\dagadjest$ recovers $\trueDAG$. Note that $\trueDAG$ may be consistent with more than one permutation. 

In the previous section, we defined the concept of a minimum-trace DAG (Definition~\ref{defn:mintrace}), which by definition minimizes the sum of the error variances $\sum_j \dagerrvar_j^2(\perm)$ over all $\perm$. Thus, minimum-trace DAGs minimize the total squared prediction error among all  DAG parameterizations of the underlying joint Gaussian distribution. While minimum-trace DAGs are not unique in general, when they are we can guarantee their recovery via $\dagadjest$.

Given $\trueDAG\in\eqclass(\trueCov)$, define $\trueOm$ to be such that $\trueCov(\trueDAG,\trueOm)=\trueCov$ (cf. \eqref{eq:def:sigmafcn}).

\def\trconst{a_{3}}
\begin{thm}
\label{thm:mintr}
Suppose that the minimum-trace DAG $\trueDAG$ is unique and there exists $\trconst>10$ such that 
\begin{align*}
\frac{\tr\trueOm}{\tr\tOm{\perm}}
&\le 1-\trconst\sqrt{\frac{(d+1)\log p}{n}}
\quad\forall\,\tr\tOm{\perm} \ne \tr\trueOm.
\end{align*}

\noindent
Then there exist constants $\covconst_{j}=\covconst_{j}(\trueCov)$ such that if
\begin{enumerate}\itemsep=1ex
    \item $\pl$ is $\ell_{0}$-compatible,
    \item $n > \covconst_1 \, (d+1)^{3}\log p$,
    \item $\lambda\ge \covconst_2\,  \sqrt{\frac{(d+1)\log p}{n}}$,
    \item $\betamin(\trueCov) \;>\; (1+\mcpparam)\lambda$ for some $\mcpparam>\covconst_3>0$,
\end{enumerate}

\noindent
then with probability at least $1-\probbound$, it holds that 
\begin{align*}
\supp(\dagadjest) = \supp(\trueDAG) 
\quad\text{ and }\quad
\norm{\dagadjest-\trueDAG}_{r}
\les \frac{\penderiv}{\mineval(\trueCov)}\norm{\trueDAG}_{0}^{1/r}.
\end{align*}

\noindent
Furthermore, if $\trueDAG$ is not unique, then all of the above statements are true with $\trueDAG$ replaced by some minimum-trace DAG. 
\end{thm}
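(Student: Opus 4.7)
The plan is to reduce Theorem~\ref{thm:mintr} to the uniform control from Theorems~\ref{thm:main:supp} and~\ref{thm:main:dev}, using the global optimality of $\dagadjest$ to pin down any estimated permutation $\estperm\in\estperms$ as a minimum-trace permutation. First, I would invoke Theorems~\ref{thm:main:supp} and~\ref{thm:main:dev} to obtain a high-probability event $\mE$ on which, simultaneously for every $\perm\in\allperms$, $\supp(\dagadjest(\perm)) = \supp(\tB{\perm})$ and $\norm{\dagadjest(\perm) - \tB{\perm}}_{r} \lesssim (\lambda/\mineval(\trueCov))\norm{\tB{\perm}}_{0}^{1/r}$. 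Since $\dagadjest = \dagadjest(\estperm)$ for every $\estperm\in\estperms$, it would then suffice to prove that on $\mE$ one has $\tB{\estperm} = \trueDAG$ (or, in the non-unique case, some minimum-trace DAG); the two conclusions of the theorem follow immediately from the uniform bounds applied to this particular $\estperm$.

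To identify $\estperm$, I would exploit the global minimality of $\dagadjest$: writing $\goodperm$ for the minimum-trace permutation and using $\dagadjest=\dagadjest(\estperm)$,
\begin{align*}
\plsobj(\dagadjest(\estperm)) \;\le\; \plsobj(\tB{\goodperm}).
\end{align*}
The key step is to translate this empirical inequality into a population inequality on the traces. Using that $\sampledagmat - \sampledagmat\tB{\perm}$ has i.i.d.\ $\normalN_p(0,\tOm{\perm})$ rows, together with standard Gaussian and chi-squared concentration, one can establish the uniform bound
\begin{align*}
\Bigl| \tfrac{1}{2n}\Norm{\sampledagmat - \sampledagmat\dagadjest(\perm)}_{\frob}^{2} - \tfrac{1}{2}\tr\tOm{\perm} \Bigr| \;\lesssim\; \eta_n\cdot \tr\tOm{\perm}, \qquad \eta_n \sim \sqrt{(d+1)\log p/n},
\end{align*}
valid for all $\perm$ on a subset of $\mE$ of comparable probability. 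The cross and quadratic terms generated by expanding around $\tB{\perm}$ are controlled via the deviation bound above (to handle $\Norm{\sampledagmat(\dagadjest(\perm)-\tB{\perm})}_{\frob}^{2}$) and via sub-Gaussian concentration of $\sampledagmat^T(\sampledagmat - \sampledagmat\tB{\perm})/n$ in $\ell_\infty$ paired with an $\ell_1$ bound on $\dagadjest(\perm)-\tB{\perm}$.

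Combining the previous display applied to $\perm=\estperm$ and $\perm=\goodperm$ with the $\ell_{0}$-compatibility of $\pl$ (which yields $\pl(\tB{\goodperm}) \le \uppflat\,\lambda^{2}\norm{\trueDAG}_{0}$ and $\pl(\tB{\estperm})\ge 0$) produces
\begin{align*}
\tr\tOm{\estperm} - \tr\trueOm \;\lesssim\; \lambda^{2}\norm{\trueDAG}_{0} + \eta_n\bigl(\tr\tOm{\estperm} + \tr\trueOm\bigr).
\end{align*}
If $\tr\tOm{\estperm} \ne \tr\trueOm$, the trace-gap hypothesis forces $\tr\tOm{\estperm} - \tr\trueOm \ge \trconst\,\eta_n\,\tr\tOm{\estperm}$; taking $\trconst>10$ larger than the aggregated constants on the right-hand side yields a contradiction. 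Hence $\tr\tOm{\estperm} = \tr\trueOm$, and uniqueness of the minimum-trace DAG (or the corresponding replacement otherwise) gives $\tB{\estperm} = \trueDAG$. The two displayed conclusions of Theorem~\ref{thm:mintr} then follow by reading off Theorems~\ref{thm:main:supp} and~\ref{thm:main:dev} at this particular $\estperm$.

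The main obstacle will be the error accounting in the middle step: the penalty term $\pl(\trueDAG)\lesssim\lambda^{2}\norm{\trueDAG}_{0}$, the Gaussian/Wishart concentration of the empirical loss, and the cross terms driven by $\dagadjest(\perm)-\tB{\perm}$ must all be simultaneously dominated by the multiplicative trace gap $\trconst\,\eta_n\,\tr\tOm{\estperm}$. Balancing $\lambda^{2}\norm{\trueDAG}_{0}$ against $\eta_n\,\tr\tOm{\estperm}$ when $\lambda^{2}\sim\eta_n^{2}$ and $\norm{\trueDAG}_{0}$ can be as large as $O(pd)$ is precisely what forces the strengthened sample-size requirement $n > \covconst_1(d+1)^{3}\log p$ in place of the $n\gtrsim (d+1)\log p$ of Theorems~\ref{thm:main:supp} and~\ref{thm:main:dev}: the extra $(d+1)^{2}$ factor absorbs the $\lambda^{2}d$ contribution from the deviation bound and supplies the $\sqrt{d}$ slack needed to overwhelm $\pl(\trueDAG)$.
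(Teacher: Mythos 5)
Your proposal is correct and follows essentially the same route as the paper's proof (Theorem~\ref{thm:scorebased} together with Proposition~\ref{prop:mintr:ident}): both start from the global-optimality inequality $\plsobj(\dagadjest)\le\plsobj(\tB{\goodperm})$, concentrate $\Norm{\bErr{\perm}}_{\frob}^2/n$ uniformly around $\tr\tOm{\perm}$ (Lemma~\ref{lem:pi:tracebound}), bound the penalty via $\ell_0$-compatibility, derive a contradiction from the relative trace-gap unless $\tr\tOm{\estperm}=\tr\trueOm$, and then read off the support-recovery and deviation bounds at $\estperm$. The only difference is bookkeeping: the paper centers the basic inequality at $\tB{\estperm}$ so the quadratic term is nonnegative and can be dropped, and controls the cross term via the Gaussian-width event; you propose to concentrate $\Norm{\sampledagmat-\sampledagmat\dagadjest(\perm)}_{\frob}^2$ directly, which forces you to bound both the quadratic and cross terms explicitly --- a heavier but equivalent accounting that produces the same $(d+1)^3\log p$ sample-size requirement.
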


\noindent
Although the previous theorem remains valid when $\trueDAG$ is not unique, it is still of interest to provide conditions under which minimum-trace DAGs are indeed unique. Suppose that we know the error variances $\trueOm$ for the true DAG. Then by proper rescaling of $X_j$, we can always make $\trueOm=\omega_0^2I$, which makes $\trueDAG$ an \emph{equivariance DAG}. As it turns out, an equivariance DAG is automatically the unique minimum-trace DAG and therefore by Theorem~\ref{thm:mintr}, $\dagadjest$ recovers the true causal DAG $\trueDAG$ accurately.

\begin{lemma}
\label{lem:equalvar}
Suppose $\trueCov$ is given and \eqref{eq:intro:streqnmodel} holds for some $\dagnopi\in\dagspace$ with $\varnopi=\omega_0^2I$. Then $\dagnopi$ is the unique minimum-trace DAG.
\end{lemma}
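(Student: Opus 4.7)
The plan is to reduce the statement to a single application of the AM--GM inequality, exploiting the fact that the product of the conditional error variances across a permutation is invariant in $\perm$. First I would record the identity
\begin{align*}
\prod_{j=1}^p \dagerrvar_j^2(\perm) \;=\; \det\trueCov \qquad \text{for every } \perm\in\allperms.
\end{align*}
This follows directly from the modified Cholesky decomposition $\permmat_\perm \trueCov^{-1} = (I-L)D^{-1}(I-L)^T$ used to define $\tB{\perm}$ and $\tOm{\perm}$ in \eqref{eq:def:tB}: taking determinants and using $\det(I-L)=1$ and $|\det\permmat_\perm|=1$ yields $\det\trueCov = \det D$, and since $\tOm{\perm}=\permmat_{\perm^{-1}}D$ merely permutes the diagonal entries of $D$, the claim follows. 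Evaluating the identity at any permutation consistent with $\dagnopi$ pins down $\det\trueCov = \omega_0^{2p}$.

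Next, AM--GM applied to the positive numbers $\{\dagerrvar_j^2(\perm)\}_{j=1}^p$ gives
\begin{align*}
\tr\tOm{\perm}
\;=\; \sum_{j=1}^p \dagerrvar_j^2(\perm)
\;\geq\; p\Big(\textstyle\prod_{j=1}^p \dagerrvar_j^2(\perm)\Big)^{1/p}
\;=\; p(\det\trueCov)^{1/p}
\;=\; p\omega_0^2
\;=\; \tr\varnopi,
\end{align*}
with equality if and only if all of the $\dagerrvar_j^2(\perm)$ are equal, in which case each must equal $\omega_0^2$. This simultaneously shows that $\dagnopi$ attains the minimum trace and pins down the structure of any other minimizer: any minimum-trace permutation $\perm^*$ must satisfy $\tOm{\perm^*} = \omega_0^2 I$.

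For uniqueness, both $\dagnopi$ and $\tB{\perm^*}$ now solve \eqref{eq:intro:streqnmodel} with the \emph{same} error covariance $\omega_0^2 I$ and produce the same marginal covariance $\trueCov$. I would then invoke the identifiability of equal-variance Gaussian DAGs \citep{peters2014}, which guarantees that within the equal-error-variance submodel the DAG structure is uniquely determined by the joint distribution, to conclude $\tB{\perm^*} = \dagnopi$. The AM--GM step is elementary; the real content of the lemma is absorbed into this last invocation, which is the main obstacle if one wanted a fully self-contained proof. A self-contained alternative would note that $(I-\dagnopi)^{-1}(I-\tB{\perm^*})$ is an orthogonal matrix whose unit diagonal, triangularizability under two (possibly different) orderings, and DAG sparsity force it to be the identity, but appealing to the established equal-variance identifiability is considerably cleaner.
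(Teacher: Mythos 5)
Your proof takes essentially the same route as the paper's: both observe that $\prod_j \dagerrvar_j^2(\perm)$ (equivalently $\log\det\tOm{\perm}$) is permutation-invariant, both conclude via AM--GM (which the paper phrases as solving a Lagrangian/constrained-optimization problem) that $\tr\tOm{\perm}$ is minimized exactly when all conditional variances are equal, and both then invoke the Peters--B\"uhlmann equal-variance identifiability theorem to force uniqueness of the DAG. One small correction: the identifiability result you need is \citet{peters2013}, not \citet{peters2014}.
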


\noindent
Thus, a practical use case of Theorem~\ref{thm:mintr} is as follows: If one can estimate the error variances from data beforehand or if one has reasons to believe that the true error variances are approximately identical, then score-based learning \eqref{eq:intro:mainestimator} can be used to estimate the true DAG. 

As a related result, \citet{peters2013} have shown that equivariance DAGs are identifiable.
Note that a minimum-trace DAG always exists, whereas an equivariance DAG may not (Example~\ref{ex:mainexample}). Therefore, the minimum-trace property provides a natural generalization of the equivariance property to general covariance matrices that are not equivariant.
As a consequence of Lemma~\ref{lem:equalvar}, we immediately obtain the following corollary:

\begin{cor}
\label{cor:eqvar}
Theorem~\ref{thm:mintr} remains true if ``minimum-trace'' is replaced with ``equivariance''.
\end{cor}

\noindent
Thus, if we are interested in recovering an equivariance DAG $\trueDAG$ then Corollary~\ref{cor:eqvar} shows that $\supp(\dagadjest)=\supp(\trueDAG)$. To the best of our knowledge, this is the first result that guarantees consistent structure learning of a Gaussian DAG when $p\gg n$.

\begin{remark}
\sloppypar{A related but weaker result first appeared in \citet{geer2013}: Deviation, sparsity bounds, and permutation recovery are proved for equivariance DAGs under a low-dimensional setting with $p\les n/\log n$. 
Theorem~\ref{thm:mintr} thus provides a strengthening of this result to include support recovery for arbitrary covariance matrices and general nonconvex penalties under the high-dimensional scaling $n\ges (d+1)^{3}\log p$.}
\end{remark}

\subsection{Conditional independence learning}
\label{subsec:app:ci}

Our final application concerns the problem of learning CI relations for a $p$-variate Gaussian distribution $\normalN_p(0,\trueCov)$.
Throughout this section we assume the reader is familiar with concepts from the graphical modeling literature such as $d$-separation, Markov equivalence, faithfulness, Markov perfectness, etc. In particular,  note that the equivalence class $\eqclass(\trueCov)$ is \emph{not} the same as the Markov equivalence class.

\def\ci{\mathcal{I}}
In graphical modeling, the goal is to represent a distribution via a graph in which separation can be used to read off CI relations.
In this way, a DAG $G=(V=[p],E)$ defines---via the notion of $d$-separation---a set of pairwise CI relations, which we denote by $\ci(G)\subset[p]\times[p]\times2^{[p]_{ij}}$. A triplet $(i,j,S)\in \ci(G)$ indicates that that $X_{i}$ is $d$-separated from $X_{j}$ by $X_{S}$ in the graph $G$, and the Markov property for Bayesian networks implies that $X_{i}\independent X_{j}\| X_{S}$.
Specialized to our setting, we use the shorthand $\ci(\perm)=\ci(\tB{\perm})$ and $\widehat{\ci}(\perm) = \ci(\dagadjest(\perm))$. Finally, for the joint Gaussian $\normalN_p(0,\trueCov)$, we denote by $\ci(\trueCov)$ the set of all pairwise CI relations among $X_1,\ldots,X_p$. Recall that an \emph{I-map} is any graph such that $\ci(G)\subset\ci(\trueCov)$, and a \emph{minimal I-map} is an I-map such that removing any edge from the graph violates this inclusion.

In general, the sets $\ci(\perm)$, $\widehat\ci(\perm)$, and $\ci(\trueCov)$ can all be different, although $\tB{\perm}$ is always a minimal I-map for $\normalN_p(0,\trueCov)$, and hence in particular $\ci(\perm)\subset\ci(\trueCov)$. On the other hand, it is not hard to see that $\bigcup_{\perm\, \in\, \allperms}\ci(\perm) = \ci(\trueCov)$. If there exists a single permutation $\perm$ such that $\ci(\perm)=\ci(\trueCov)$, then we call $\tB{\perm}$ \emph{faithful}---i.e. all pairwise CI relations of the underying Gaussian distribution can be read off from the graph $\tB{\perm}$ via the $d$-separation criterion. Under the assumptions of Theorem~\ref{thm:mintr}, if the joint Gaussian distribution $\normalN_p(0,\trueCov)$ is faithful to the DAG $\tB{\goodperm}$, then the graphical structure of $\dagadjest$ contains all CI relations among $X_1,\ldots,X_p$, which is implied by the exact support recovery. That is, $\ci(\dagadjest)=\ci(\estperm)=\ci(\goodperm)=\ci(\trueCov)$. Unfortunately, faithfulness is a strong assumption that rarely holds in practice \citep{lin2014,uhler2013}. We illustrate this with the following simple examples:

\begin{example}
\label{ex:ci}
Consider the covariance matrix $\trueCov = \trueInv^{-1}$ where
\begin{align}
\trueInv = \begin{pmatrix}
10 & 1 & 0 & 2 \\
1 & 10 & 3 & 0 \\
0 & 3 & 10 & 4 \\
2 & 0 & 4 & 10
\end{pmatrix}.
\end{align}

\noindent
This covariance matrix implies two conditional independence relations: 
\begin{align*}
X_{1}\independent X_{3}\|\{X_{2},X_{4}\}, \quad X_{2}\independent X_{4}\|\{X_{1},X_{3}\}.
\end{align*}
This example corresponds to the so-called undirected ``diamond'' graph, which does not admit a faithful DAG, as implied by the following argument. 
Notwithstanding, the equivalence class $\eqclass(\trueCov)$ is still well-defined and can be partitioned into two sets $\eqclass(\trueCov)=\eqclass_{1}\cup\eqclass_{2}$, where $\eqclass_{j}$ is a Markov equivalence class of DAGs for $j=1,2$. That is, the DAGs in each $\eqclass_{j}$ encode the same set of CI relations (via $d$-separation). In this example, each DAG in $\eqclass_{1}$ implies exactly one CI relation, namely $X_{1}\independent X_{3}\|\{X_{2},X_{4}\}$, and similarly $\eqclass_{2}$ implies exactly one CI relation, namely $X_{2}\independent X_{4}\|\{X_{1},X_{3}\}$. Using the notation we introduced above, if $\tB{\perm}\in\eqclass_{1}$ then $\ci(\perm)=\{(1,3,\{2,4\})\}$ and if $\tB{\perm}\in\eqclass_{2}$ then $\ci(\perm)=\{(2,4,\{1,3\})\}$.
\end{example}

\begin{example}
The deficiency shown in Example~\ref{ex:ci} is not unique to directed graphs: There exist $\trueCov$ that are faithful to a DAG but that are not perfect with respect to any Markov network. An example is given by the so-called \emph{v-structure}, i.e. $X_{1}\to X_{2}\leftarrow X_{3}$. Moreover, by combining a $v$-structure with the diamond graph in the previous example, we can construct a Gaussian distribution that is neither faithful  nor perfect. Let $\normalN(0,\trueCov_{1})$ denote the distribution in Example~\ref{ex:ci}, $\normalN(0,\trueCov_{2})$ denote any trivariate Gaussian distribution parametrized by a faithful $v$-structure, and define
\begin{align*}
\trueCov
= \begin{pmatrix}
\trueCov_{1} & 0\\
0 & \trueCov_{2}
\end{pmatrix}
\in\R^{7\times 7}.
\end{align*}

\noindent
Then $\normalN(0,\trueCov)$ is neither faithful nor perfect.
\end{example}

Evidently, directed and undirected graphs encode different sets of CI relationships. Furthermore, there exist examples where a minimal DAG is much more parsimonious than a minimal Markov network and vice versa~\citep{koller2009}. The key is that in practice one cannot determine in advance which representation is optimal, so it is important to be able to learn as much as possible from either type of model. Is it possible to learn $\ci(\trueCov)$ without the existence of a faithful DAG or a perfect Markov network?

To answer this question, we consider the use of the restricted minimizers $\dagadjest(\perm)$ for a potentially large set of permutations to learn CI relations from data without assuming that $\normalN_p(0,\trueCov)$ is faithful to some DAG. One hopes that by taking the union over all (or a large collection of) the CI relations obtained in this way, we subsume the CI relations implied by any single DAG or Markov network. This heuristic is justified by the following theorem:

\begin{thm}
\label{thm:ci}
Under the conditions of Theorem~\ref{thm:scorebased}, with probability at least $1-\probbound$:
\begin{enumerate}
\item $\dagadjest(\perm)$ is a minimal $I$-map of $\normalN_p(0,\trueCov)$ for each $\perm$; 
\item $\bigcup_{\perm\, \in\, \allperms}\widehat{\ci}(\perm)=\ci(\trueCov)$.
\end{enumerate}
\end{thm}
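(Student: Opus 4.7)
The plan is to deduce Theorem~\ref{thm:ci} as a structural consequence of the uniform support-recovery guarantee already established. Under the hypotheses of Theorem~\ref{thm:scorebased} (which include the beta-min condition $\betamin(\trueCov) > (1+\mcpparam)\lambda$), Theorem~\ref{thm:main:supp} provides, on a single high-probability event $\mathcal E$ with $\pr(\mathcal E) \ge 1-\probbound$, the simultaneous exact recoveries
\[
\supp(\dagadjest(\perm)) \;=\; \supp(\tB{\perm}) \qquad \forall\,\perm \in \allperms.
\]
I would carry out the rest of the argument on the event $\mathcal E$.

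The key observation is that $d$-separation — and therefore the set of pairwise CI relations $\ci(G)$ that a DAG encodes — depends only on the underlying skeleton and edge orientations of $G$, not on the numerical values of its weights. Since on $\mathcal E$ the DAGs $\dagadjest(\perm)$ and $\tB{\perm}$ have identical supports, they induce identical $d$-separation statements, and hence
\[
\widehat{\ci}(\perm) \;=\; \ci(\dagadjest(\perm)) \;=\; \ci(\tB{\perm}) \;=\; \ci(\perm) \qquad \forall\,\perm \in \allperms.
\]

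For part (1), the excerpt already notes (immediately after Lemma~\ref{lem:eqclass}) that each $\tB{\perm}$ is a minimal I-map for $\normalN_p(0,\trueCov)$. Being an I-map ($\ci(G)\subset\ci(\trueCov)$) and its minimality (failure upon removal of any edge) are purely graph-theoretic conditions on the support, so they transfer verbatim from $\tB{\perm}$ to $\dagadjest(\perm)$ once the skeletons coincide. For part (2), combine the displayed identity with the elementary decomposition $\bigcup_{\perm} \ci(\perm) = \ci(\trueCov)$ recorded in Section~\ref{subsec:app:ci} to obtain $\bigcup_{\perm} \widehat{\ci}(\perm) = \bigcup_{\perm}\ci(\perm) = \ci(\trueCov)$.

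The main technical obstacle is already absorbed into Theorem~\ref{thm:main:supp}: the difficult step is securing exact support recovery \emph{uniformly} over the superexponential family $\{\tB{\perm}\}_{\perm\in\allperms}$, since even $d$-separations that depend on a single edge in a single permutation could otherwise be corrupted. Once that uniform event is available, Theorem~\ref{thm:ci} follows without further probabilistic work, relying only on the fact that Bayesian-network CI semantics depend solely on the graph structure.
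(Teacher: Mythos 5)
Your proposal is correct and follows essentially the same route as the paper's proof: invoke the uniform support-recovery event from Theorem~\ref{thm:main:supp}, observe that $d$-separation and (minimal) I-mapness depend only on the support so $\widehat{\ci}(\perm)=\ci(\perm)$ and $\dagadjest(\perm)$ inherits minimal-I-mapness from $\tB{\perm}$, and then apply $\bigcup_{\perm}\ci(\perm)=\ci(\trueCov)$.
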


\noindent
This result is a consequence of uniform model selection consistency in Theorem~\ref{thm:main:supp} and is proved in Section~\ref{subsec:proofs:ci}. Conclusion 1 implies that each of the learned graphs $\dagadjest(\perm)$ are as parsimonious as possible, and Conclusion 2 shows that we can detect all CI relations by enumerating all permutations $\pi$. Moreover, if we take any subset $K\subset\allperms$ of permutations and compute $\cup_{\perm\in K}\widehat{\ci}(\perm)$, Conclusion 2 implies that there will be no false positive errors in this set. 
Applied to Example~\ref{ex:ci}, this shows that although no single permutation is sufficient to learn the full CI structure, \emph{two} permutations suffice.

Since $\dagadjest(\perm)$ corresponds to an autoregressive model (Section~\ref{subsec:prelim:minimizers}), it can be estimated efficiently and consistently via vanilla sparse regression. Theorem~\ref{thm:ci} then justifies testing any subset of permutations (possibly \emph{all} permutations) to discover CI relations in a distribution. Since the set $K$ can be as large as $|K|=p!$, establishing the uniform validity of all possible subsets is clearly nontrivial. 
Furthermore, Theorem~\ref{thm:ci} has an interesting implication for the PC algorithm \citep{kalisch2007}: It implies that one can use regression as an independence oracle (e.g. in place of the Fisher $z$-test) to detect CI relations consistently from high-dimensional data. Since the PC algorithm runs in $O(p^{d})$ time, this gives an efficient way to perform the search over permutations described above.

\section{Comparison}
\label{subsec:app:comparison}

Our results are most closely related to \citet{geer2013}, who study a thresholded $\ell_{0}$-penalized MLE under the same statistical model. Our results generalize their results in several ways:

\begin{enumerate}
    \item We prove that $\dagadjest$ attains exact support recovery when $p\gg n$, which has not been shown previously to the best of our knowledge. For a more detailed discussion, see Remark~\ref{rem:L0supp}.
    \item We have shown how $\tB{\estperm}$ mimics the behaviour of an oracle minimizer of the expected penalized loss (Section~\ref{subsubsec:app:oracle}).
    \item We generalize existing results on recovery of an equivariance DAG to high-dimensions and general nonconvex regularizers (Section~\ref{subsec:app:causal}).
    In order to exploit equivariance, one must essentially assume that the noise variance is known up to a multiplicative constant. By relaxing this assumption to minimum-trace DAGs, our results provide theoretical guarantees in the practical setting where we have no prior knowledge about the error variances $\varnopi$.
    \item We allow for a general class of tractable penalties $\pl$ beyond the $\ell_{0}$ penalty, which is known to be intractable.
    Although still nonconvex, with a concave penalty the program \eqref{eq:intro:mainestimator} is defined over a continuous parameter space and can be solved exactly with dynamic programming \citep{silander2012,xiang2013}.
    \item \sloppypar{We consider optimization over the full space of parameters $\dagspace$, instead of a thresholded parameter space as in \cite{geer2013} (see their Condition~3.3). This requires a more delicate analysis and makes our method more practical since the thresholded parameter space involves an unknown constant which is difficult to select.}
    \item We apply our results to the problem of CI learning (Section~\ref{subsec:app:ci}) and formally justify the use of penalized regression as an independence oracle inside the PC algorithm. These results are made feasible by our novel results on support recovery for all permutations $\perm$.
\end{enumerate}

\noindent
Our theorems on deviation bounds (Theorem~\ref{thm:main:dev}) and sparsity (Theorem~\ref{thm:main:sparse}) are similar in spirit to \cite{geer2013}, however, our sparsity bounds use a minimum-trace DAG in the upper bound, as opposed to a minimal-edge DAG (Remark~\ref{rem:sparsity}). 

The present work is also related to \citet{raskutti2014}, which seeks to find the sparsest Bayesian network for a distribution $\pr$ in a nonparametric setting. They show that their algorithm is uniformly consistent \citep{zhang2002} under strictly weaker assumptions than the PC algorithm \citep{kalisch2007}, although it is not discussed if these results are applicable when $p\gg n$ or what the scaling between $(n,p,d)$ would be in such a setting. Their method is a two-stage method, with the primary step involving a constraint-based search similar to the PC algorithm. By contrast, we have focused on score-based estimators, which are faster and more popular in applications (Section~\ref{subsec:intro:motivation}). For example, compared to \citet{raskutti2014}, whose numerical simulations only explore small graphs with $p\le 8$, \citet{aragam2015} provide examples with $p=8000$.

Finally, \citet{shojaie2010} proved the model selection consistency of an $\ell_{1}$-based PLS estimator for a single, \emph{fixed} permutation $\fixperm$ under an irrepresentability condition.
By assuming that $\fixperm$ is known, they circumvent all of the technical challenges associated with estimating an \emph{unknown} permutation, in effect proving the consistency of $\dagadjest(\fixperm)$ for a \emph{single} permutation. This is equivalent to estimating a standard autoregressive model, which is well-studied.

\section{Discussions}
\label{sec:conclusion}

To conclude, we discuss various issues related to extending the results derived herein to more general settings.

\subsection{The $\ell_{0}$ case}
\label{subsec:conclusion:ell0}

Condition~\ref{condn:pl:basic} precludes certain regularizers including the $\ell_{0}$ penalty, however, a simple modification to the proofs accounts for any regularizer that is $\ell_{0}$-compatible as defined in Definition~\ref{defn:L0compat}, including the $\ell_{0}$ penalty (see Remark~\ref{rem:ell0}). 
Thus everything derived in this work applies equally to the $\ell_{0}$ penalty. In particular, Theorem~\ref{thm:main:sparse} implies $\norm{\tB{\estperm}}_0\les \norm{\dagadjest}_0\les \norm{\tB{\goodperm}}_0$. If $\tB{\goodperm}$ is one of the sparsest DAGs in $\eqclass(\trueCov)$, then the number of edges in $\dagadjest$ will be on the same order as that of the sparsest ones, as in \cite{geer2013}. For example, this occurs when an equivariance DAG is amongst the sparsest.

\subsection{Computation and loss functions}
\label{subsec:conclusion:comp}

Since \eqref{eq:intro:mainestimator} is a nonconvex program, computation of $\dagadjest$ is challenging and in fact NP-hard \citep{chickering2004}. Fortunately, there are fast algorithms via dynamic programming for finding globally optimal Bayesian networks \citep{ott2003,singh2005,silander2012}. For example, by combining dynamic programming with A* search, \citet{xiang2013} propose an exact algorithm to compute the $\ell_{1}$-regularized version of $\dagadjest$ (cf. \eqref{eq:def:plsobj}) that is tractable on problems with hundreds of nodes. 

This highlights one reason for choosing the least-squares loss, i.e. it leads to more efficient computation in practice. Moreover, the least-squares loss has the potential for generalization to subgaussian distributions for which there is no closed-form likelihood.
What is necessary in our analysis, however, is that~\eqref{eq:def:plsobj} factors into $p$ neighbourhood problems (for each fixed ordering of the variables), as detailed in Lemma~\ref{lem:restricted:nhbd}, a property that holds for other losses as well. For example, we could have used the corresponding negative log-likelihood (NLL) of~\eqref{eq:intro:streqnmodel}, as in~\cite{geer2013}, instead of least-squares. 
When used with the $\ell_{0}$ penalty, the NLL has the appealing property of \emph{score equivalence} \citep{heckerman1995}, which assigns the same score to DAGs that are Markov equivalent. While conceptually appealing, our results show that this property is not necessary to identify a sparse DAG.

\subsection{Undirected models and extensions}
\label{subsec:conclusion:ext}
 
The problem of learning DAGs from data is closely related to the problem of learning undirected \emph{Markov networks}. In particular, our results must be contrasted with the much simpler problem of estimating an undirected Gaussian graphical model, which has received much more attention. \citet{meinshausen2006} showed how to reduce the estimation of a Gaussian graphical model to neighbourhood regression by regressing each $\dagvec_{j}$ onto the rest of the variables $\dagvec_{-j}$. This procedure works for other families of undirected graphical models, including Ising models \citep{ravikumar2010}, discrete models \citep{loh2013}, and exponential random families \citep{yang2015}. In this setting, there are only $p$ neighbourhoods, compared to $O(p!)$ for DAGs.

As an immediate corollary of the results derived in this work, we also obtain consistency of this popular neighbourhood-regression based estimator of the Gaussian graphical model. This follows since, in particular, we have uniform control over the neighbourhood problems $\abstractpls_\lambda(\sampledagmatelem_{j}, \sampledagmat; [p]_{j})$ for each $j=1,\ldots,p$. In this case, there are only $p$ neighbourhood problems to control, much smaller than $p{p\choose d}$ for DAGs. One advantage of our results is the relaxation of the strong \emph{neighbourhood stability} assumption (also known as the \emph{irrepresentability condition}) through the use of nonconvex penalties such as the MCP and SCAD.

It is natural to inquire how these results extend to more general settings with non-Gaussian structural equations. For example, the neighourhood-level concepts (e.g. Definition~\ref{defn:msexp} and Theorem~\ref{thm:fixed:l1l2bound}) easily extend to subgaussian errors, and as a consequence much of the machinery developed here applies to subgaussian classes. The difficulty, however, is that the \emph{population-level} structure of such structural equations (i.e. $\eqclass(\trueCov)$) is not well understood in the non-Gaussian setting, and so while one could obtain formal results in this direction it is not clear how strong the assumptions would be. For example, for different permutations the errors $\dagerr_j(\perm)$ are no longer guaranteed to be independent of the parents $\dagvec_{\dagcand_{j}(\perm)}$ if the joint distribution is not Gaussian. It is an interesting future direction to consider such extensions.


\section{Outline of proofs}
\label{sec:proofs}

In the present section we outline the main ideas of our proof technique; detailed proofs of the various technical results are postponed to Appendices~\ref{app:tech}--\ref{app:sparse}. Sections~\ref{subsec:proofs:supp} and~\ref{subsec:proofs:dev} outline the proofs of Theorems~\ref{thm:main:supp} and~\ref{thm:main:dev}, respectively, which involve most of the technical heavy lifting. We then outline how to apply these two theorems to derive the results in Sections~\ref{subsec:app:scorebased}, \ref{subsec:app:causal}, and~\ref{subsec:app:ci} in Sections~\ref{subsec:proofs:scorebased}, \ref{subsec:proofs:causal}, and~\ref{subsec:proofs:ci}, respectively.

The proofs of Theorems~\ref{thm:main:supp} and~\ref{thm:main:dev} will be broken down into several steps. First, we establish some basic properties of the objective function and the probability space in order to reduce the neighbourhood regression analysis to a family of maximal sets denoted by $\nhbdmax_{j}(S)$ (Definition~\ref{defn:nhbdmax}). Then we introduce the notion of monotonicity (Lemma~\ref{lem:estim:supp:monot}) that is central to our proofs, and exploit this to provide a uniform bound on the probability of false selection for any neighbourhood problem (Proposition~\ref{prop:gen:reduct}). Then, we will derive an independent result that gives a deviation bound for a \emph{fixed} design regression problem (Theorem~\ref{thm:fixed:l1l2bound}). Using a similar monotonicity argument as in the first step, we then show that this result can be applied uniformly to each neighbourhood problem, which yields our claimed deviation bounds (see Proposition~\ref{prop:abstract:dev}).

\subsection{Support recovery}
\label{subsec:proofs:supp}

We begin by controlling the event 
\begin{align}
\label{eq:def:suppevent}
\suppevent
:= \{\supp(\dagadjest(\perm)) \ne \supp(\tB{\perm})\,\;\exists\,\perm\in\allperms\}.
\end{align}

\noindent
We will do this by reducing the analysis of $\dagadjest(\perm)$ to a series of neighbourhood regression problems. There are two key steps: (i) Showing that each estimator $\dagadjest(\perm)$ is equivalent to solving a series of $p$ regression problems given by $\abstractpls_{\lambda}(\sampledagmatelem_{j},\sampledagmat;\dagcand_{j}(\perm))$ (cf. Definition~\ref{defn:nhbdregression}), and (ii) Controlling the total number of sets $\dagcand$ that need to be considered.

\subsubsection{Reduction to neighbourhood regression}
\label{subsubsec:proofs:reduction}
Recall that the $j$th column of $\dagadjest(\perm)$ is denoted by $\dagcolest_{j}(\perm)$ and as usual, denote the sample version of $\dagerr_j(\perm)$ by boldface, i.e. $\sampledagerrcol_{j}(\perm):= \sampledagmatelem_j - \sampledagmat\dagcol_j(\perm)$. The first step above is justified by the following result. The symbol $\independent$ is used here to denote independence of random variables.

\begin{lemma}
\label{lem:restricted:nhbd}
Suppose $\sampledagmat\iid\normalN_{p}(0,\trueCov)$ and $\lambda\ge 0$. Then the following statements are true:
\begin{enumerate}\itemsep =1ex
\item[$(a)$] For any $j \in [p]$ and $\perm\in\allperms$, $\sampledagerrcol_{j}(\perm) \independent\sampledagmat_{\dagcand_{j}(\perm)}$.
\item[$(b)$] A matrix $\dagadjest(\perm)\in\dagspace$ is a restricted minimizer (Definition~\ref{defn:restrictedmin}) if and only if
$\dagcolest_{j}(\perm)\in \abstractpls_{\lambda}(\sampledagmatelem_{j},\sampledagmat;\dagcand_{j}(\perm))$ for each $j \in [p]$.
\item[$(c)$] $\dagadjest=\dagadjest(\estperm)$ is a global minimizer of $\plsobj(B)$ if and only if $\dagcolest_{j}(\estperm)\in\abstractpls_{\lambda}(\sampledagmatelem_{j},\sampledagmat;\dagcand_{j}(\estperm))$ for each $j \in [p]$ and $\estperm\in\estperms$.
\end{enumerate} 
\end{lemma}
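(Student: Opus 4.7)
The plan is to prove the three claims in order, using two core ingredients: the Gaussian uncorrelated-implies-independent principle for (a), and the column-wise decoupling of the PLS objective under the Frobenius loss and a coordinate-separable penalty for (b) and (c).

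For (a), I first verify independence at the population level. Since $\dagcol_j(\perm) = \nhbdcoef_j(\dagcand_j(\perm))$ (noted below Definition~\ref{defn:nhbd}), $\dagcol_j(\perm)$ is the least-squares projection coefficient of $X_j$ onto $X_{\dagcand_j(\perm)}$, so the normal equations give $\E[\dagerr_j(\perm)\, X_k] = 0$ for every $k \in \dagcand_j(\perm)$. Joint Gaussianity of $X$ then upgrades this orthogonality to full independence of $\dagerr_j(\perm)$ from $X_{\dagcand_j(\perm)}$. At the sample level, the rows of $(\sampledagerrcol_j(\perm),\, \sampledagmat_{\dagcand_j(\perm)})$ are i.i.d.\ draws from the population joint distribution of $(\dagerr_j(\perm),\, X_{\dagcand_j(\perm)})$, so the independence of the two population marginals translates to independence between the column $\sampledagerrcol_j(\perm) \in \R^n$ and the submatrix $\sampledagmat_{\dagcand_j(\perm)}$.

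For (b) and (c), writing $B = [\beta_1 \| \cdots \| \beta_p]$ and using the Frobenius decomposition together with the coordinate-separability of $\pl$ gives
\[
    \plsobj(B) = \sum_{j=1}^p\Bigl[\tfrac{1}{2n}\|\sampledagmatelem_j - \sampledagmat \beta_j\|_2^2 + \pl(\beta_j)\Bigr].
\]
As recorded in Section~\ref{subsec:prelim:minimizers}, $B \in \subdagspace{\perm}$ is equivalent to $\supp(\beta_j) \subset \dagcand_j(\perm)$ for every $j$, so the support-restricted program decouples into $p$ independent subproblems, each matching $\abstractpls_\lambda(\sampledagmatelem_j, \sampledagmat; \dagcand_j(\perm))$ in Definition~\ref{defn:abstractregression}; this is (b). For (c), combine (b) with the identity $\min_{B \in \dagspace} \plsobj(B) = \min_{\perm \in \allperms} \plsobj(\dagadjest(\perm))$, which uses $\dagspace = \bigcup_\perm \subdagspace{\perm}$. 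In the forward direction, any global minimizer $\dagadjest$ has $\estperms \ne \emptyset$ since $\dagadjest \in \dagspace$, and for each $\estperm \in \estperms$ it is feasible for $\subdagspace{\estperm}$ and thus attains the restricted minimum there, so $\dagadjest = \dagadjest(\estperm)$ and (b) supplies the column-wise characterization. For the converse, the column-wise characterization for every $\estperm \in \estperms$ makes $\dagadjest$ a restricted minimizer for each such $\estperm$ by (b); a contradiction argument using the two-level identity above then rules out any strictly better $\dagadjest(\perm')$ for $\perm' \notin \estperms$, by passing from any alleged competitor to the corresponding restricted minimizer and applying (b) to it to obtain a contradiction with the column-wise optimality of $\dagadjest$.

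The main obstacle is the converse direction of (c): since $\estperms$ is defined relative to $\dagadjest$, one must carefully handle the asymmetry between the compatible orderings of $\dagadjest$ and those of a hypothetical competitor living outside $\estperms$. The cleanest route is to argue directly via (b) that the column-wise conditions for every $\estperm \in \estperms$ yield $\plsobj(\dagadjest) = \min_\perm \plsobj(\dagadjest(\perm))$, and then invoke the two-level decomposition to upgrade this to global optimality over $\dagspace$. Parts (a) and (b) themselves are essentially bookkeeping once the definitions from Sections~\ref{sec:prelim} and~\ref{sec:nhbd} are unwound.
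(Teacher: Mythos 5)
Your parts (a) and (b) follow essentially the paper's own route. For (a), the paper invokes conditional expectation while you go through the normal equations and joint Gaussianity; these are two ways of stating the same orthogonality-implies-independence argument, and the passage from the population marginals to the $n$ i.i.d.\ rows is identical. For (b), the decomposition of $\plsobj$ into $p$ subproblems after rewriting the constraint $B\in\subdagspace{\perm}$ as $\supp(\beta_j)\subset\dagcand_j(\perm)$ for all $j$ is exactly the paper's argument.

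For (c), however, there is a genuine gap. The paper dispenses with (c) in one line---``Since (c) is a special case of (b), it suffices to prove (b)''---because $\estperms$ is, by Definition~\ref{defn:estperms}, defined relative to a global minimizer $\dagadjest$; (c) then reduces to the observation that a global minimizer lying in $\subdagspace{\estperm}$ is \emph{a fortiori} a restricted minimizer for $\estperm$, and (b) already supplies the two-sided column characterization at the restricted level. You instead try to establish a nontrivial converse ``column conditions $\Rightarrow$ global optimality'' by contradiction, and that contradiction does not materialize. If the column conditions hold for every $\estperm\in\estperms$ and you posit a strictly better $\dagadjest(\perm')$ with $\perm'\notin\estperms$, applying (b) to $\dagadjest(\perm')$ only tells you its columns solve the $\dagcand_j(\perm')$-restricted problems---a different family of problems from the $\dagcand_j(\estperm)$-restricted problems that the columns of $\dagadjest$ solve---so there is nothing to clash with. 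Your intermediate claim that the column conditions yield $\plsobj(\dagadjest)=\min_\perm \plsobj(\dagadjest(\perm))$ is false: they give $\plsobj(\dagadjest)=\plsobj(\dagadjest(\estperm))$ for each compatible $\estperm$ but say nothing about $\dagadjest(\perm')$ when $\perm'$ is incompatible with $\dagadjest$, and that restricted minimum can be strictly smaller. In fact the literal converse---a matrix that is a restricted minimizer for every permutation compatible with its own support is globally optimal---is false in general. The fix is not a sharper contradiction but noticing that, under Definition~\ref{defn:estperms}, $\estperms$ already presupposes global optimality of $\dagadjest$, so the ``if and only if'' in (c) is the one inherited from (b), not a new equivalence between restricted and global optimality.
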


\noindent
The proof of this lemma, which is a simple consequence of how the least-squares loss and the regularizer factor, is found in Appendix~\ref{app:lem:restricted:nhbd}. This allows us to  formally establish the equivalence between the DAG problem and neighbourhood regression: In order to construct $\dagadjest(\perm)$, it suffices to solve a neighbourhood regression problem for each column of $\dagadjest(\perm)$, given by $\abstractpls_{\lambda}(\sampledagmatelem_{j},\sampledagmat;\dagcand_{j}(\perm))$. A key observation is that through the independence established in Lemma~\ref{lem:restricted:nhbd}(a) and a conditioning argument, we can reduce the regression problem given by $\abstractpls_{\lambda}(\sampledagmatelem_{j},\sampledagmat;\dagcand_{j}(\perm))$ to a fixed design problem. The details are outlined in the proof of Proposition~\ref{prop:gen:reduct}.

\subsubsection{Invariant sets and monotonicity}
\label{subsubsec:proofs:monotonicity}
As a consequence of Lemma~\ref{lem:restricted:nhbd}, we have (cf. \eqref{eq:def:suppevent})
\begin{align*}
\suppevent
= \bigcup_{j=1}^{p}\{\supp(\dagcolest_{j}(\perm)) \ne \supp(\dagcol_{j}(\perm))\,\;\exists\,\perm\in\allperms\}.
\end{align*}

\noindent
Since there are $p!$ total permutations, in principle the event $\{\supp(\dagcolest_{j}(\perm)) \ne \supp(\dagcol_{j}(\perm))\,\exists\,\perm\in\allperms\}$ involves controlling a superexponential number of estimators, which seems hopeless. In order to reduce the total number of estimators we must control, we will introduce the notion of an \emph{invariant set}.

First, recall the definition of $\beta_j(S)$ (cf. Definition~\ref{defn:nhbd}) and for any $j\in[p]$ and $S\subset[d]_{j}$ define the error (or noise) for the associated neighbourhood regression as the following residual:
\begin{align*}
\nhbderr_j(S) 
&:= \dagvec_j - \beta_j(S)^{T}\dagvec.
\end{align*}

\noindent 
The support set of $\nhbdcoef_j(S)$ is denoted by $\nhbdsupp_j(S) := \supp(\beta_j(S))$ and the error variance by $\nhbdvar_{j}^{2}(S) := \var(\nhbderr_j(S))$.

\begin{defn}
\label{defn:invariant}
For any $S \subset [p]_j$, define a collection of subsets by
\begin{align*}
\nhbdset_j(S) 
:= \{T \subset[p]_j: \; \nhbdcoef_j(T) = \nhbdcoef_j(S) \} = \{T \subset[p]_j: \; \nhbdsupp_j(T) = \nhbdsupp_j(S) \},
\end{align*}

\noindent
where $\nhbdcoef_j(S)$ and $\nhbdsupp_j(S)$ are defined in Definition~\ref{defn:nhbd}. If $T\in \nhbdset_j(S)$, we call $T$ an \emph{invariant set of $S$ for $j$}, or \emph{$S$-invariant} for short.
\end{defn}

In other words, for any $j$, $\nhbdset_j(S)$ is the collection of candidate sets $T\subset[p]_{j}$ such that the projection of $X_{j}$ onto $\{X_i, \, i\in T\}$ is invariant. With some abuse of terminology, let us refer to $ \nhbdsupp_j(T) = \supp (\nhbdcoef_j(T)) $ as the \emph{support of neighbourhood $T$} (for node $j$). An equivalent description of $\nhbdset_j(S)$ is the set of neighbourhoods $T$ whose support (for node $j$) is the same and equals $m_j(S)$. 

\begin{example}
\label{ex:mainexample:cont}
Continuing with Example~\ref{ex:mainexample}, one can verify that \linebreak $\nhbdset_3(\{1,2\}) = \{\{1\},\{1,2\}\}$ and $m_3(\{1,2\}) = \{1\}$. Note also that $\nhbdset_3(S) = \nhbdset_3(\{1,2\})$ for any $S \in \nhbdset_3(\{1,2\})$. In the same example,
\begin{align*}
     \{\{1\},\{1,2\}\} = \nhbdset_3(\{1,2\}) \neq \nhbdset_3(\{1,4\}) = \{\{1,4\},\{1,2,4\}\}.
\end{align*}
\end{example}

The following lemma illustrates a crucial property of invariant sets:

\begin{lemma}
\label{lem:lattice}
$T_1,T_2 \in \nhbdset_{j}(S) \implies T_1 \cup T_2 \in \nhbdset_{j}(S)$. 
\end{lemma}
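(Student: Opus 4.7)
The plan is to use the variational (orthogonality) characterization of the population least-squares projection. Recall that, by Condition~\ref{condn:evals}, $\trueCov \posdef 0$, so for every $T \subset [p]_j$ the submatrix $\E[\dagvec_T \dagvec_T^T]$ is positive definite and hence $\nhbdcoef_j(T)$ is uniquely characterized by the normal equations: it is the unique $\beta \in \R^p$ with $\supp(\beta) \subset T$ satisfying
\begin{align*}
    \E\!\left[(\dagvec_j - \beta^T \dagvec)\,\dagvec_i\right] = 0 \quad \text{for all } i \in T.
\end{align*}
This is the one ingredient I would isolate first, since everything else follows from manipulating these orthogonality relations.

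Next, set $\beta^* := \nhbdcoef_j(S) = \nhbdcoef_j(T_1) = \nhbdcoef_j(T_2)$ and note that by definition $\supp(\beta^*) \subset T_1 \cap T_2 \subset T_1 \cup T_2$. Because $\beta^*$ solves both the $T_1$- and $T_2$-constrained projections, the residual $r := \dagvec_j - (\beta^*)^T \dagvec$ satisfies $\E[r\,\dagvec_i] = 0$ for every $i \in T_1$ and for every $i \in T_2$, and therefore for every $i \in T_1 \cup T_2$.

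Combining these two observations, $\beta^*$ is an element of $\R^p$ with support contained in $T_1 \cup T_2$ that satisfies the normal equations for the neighbourhood $T_1 \cup T_2$. By the uniqueness established in the first step, we conclude $\nhbdcoef_j(T_1 \cup T_2) = \beta^* = \nhbdcoef_j(S)$, which is exactly the statement $T_1 \cup T_2 \in \nhbdset_j(S)$.

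There is no real obstacle here — the argument is essentially a one-line reduction to the fact that least-squares projection onto a subspace is characterized by orthogonality of the residual to that subspace, so that if the same vector is the projection onto two subspaces it is automatically the projection onto their sum. The only thing to be careful about is invoking positive definiteness of $\trueCov$ to guarantee uniqueness of $\nhbdcoef_j(T)$, which is needed to conclude equality (rather than mere membership) in $\nhbdset_j(S)$.
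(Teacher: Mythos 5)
Your proof is correct and follows essentially the same route as the paper's: both rest on the orthogonality (normal-equations) characterization of the $L^2$ projection, noting that a vector that is simultaneously the projection onto $T_1$ and onto $T_2$ has its residual orthogonal to $T_1 \cup T_2$ and hence, by positive-definiteness of $\trueCov$ and uniqueness, is the projection onto $T_1 \cup T_2$. The only cosmetic difference is that you invoke the normal equations directly, whereas the paper detours through the common support $S^* = \nhbdsupp_j(S)$ and phrases orthogonality as independence of the Gaussian residual from the extra variables.
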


\noindent
In other words, if two neighbourhoods share the same support, the union of these neighbourhoods must also have the same support. This justifies the following definition:

\begin{defn}
\label{defn:nhbdmax}
The unique largest element of $\nhbdset_{j}(S)$ shall be denoted by $\nhbdmax_j(S)$. Formally,
\begin{align*}
\nhbdmax_j(S) 
:= \bigcup \nhbdset_{j}(S)
= \bigcup \{ T \subset [p]_j:\; \beta_j(T) = \beta_j(S)\}.
\end{align*}
\end{defn}

\noindent
For instance, in Example~\ref{ex:mainexample:cont} we have $\nhbdmax_3(\{1\}) = \{1,2\}$. The name ``$S$-invariant set'' comes from the fact that for any $T\in\nhbdset_{j}(S)$, we have the following useful identities:
\begin{align}
\label{eq:invariant:nhbdcoef}
\nhbdcoef_{j}(\nhbdsupp_{j}(S)) = \nhbdcoef_{j}(S) &= \nhbdcoef_{j}(T) = \nhbdcoef_{j}(\nhbdmax_{j}(S)), \\
\label{eq:invariant:nhbderr}
\nhbderr_{j}(\nhbdsupp_{j}(S)) = \nhbderr_{j}(S) &= \nhbderr_{j}(T) = \nhbderr_{j}(\nhbdmax_{j}(S)).
\end{align}

The reason for introducing invariant sets is that it is generally \emph{sufficient} to study the neighbourhood problem for $\nhbdmax_j(S)$ in the sense that once we have model selection consistency for each estimator in $\abstractpls_\lambda(\sampledagmatelem_j,\sampledagmat; \nhbdmax_j(S))$, the same is  \emph{guaranteed} for estimators based on every other neighbourhood in $\nhbdset_j(S)$. In fact, we have the following result, which says that model selection properties of the $S$-restricted estimators are monotone with respect to those sets $S$ that contain the true support.

\begin{lemma}
\label{lem:estim:supp:monot}
Suppose that $\fixeddesignmat\in\R^{n\times m}$ is fixed and consider the regression problem $\sampley = \fixeddesignmat \lincoef + \samplelinerr$ for some $\lincoef\in\R^m$. If $\supp(\lincoef) \subset S \subset U$, then we have the following inclusion: $\mserrset(\fixeddesignmat,\lincoef;S) \subset \mserrset(\fixeddesignmat,\lincoef;U).$ In particular,
\begin{align*}
\msevent(\samplelinerr,\fixeddesignmat,\lincoef;S)
&\subset \msevent(\samplelinerr,\fixeddesignmat,\lincoef;U),
\end{align*}

\noindent
where $\mserrset(\fixeddesignmat,\lincoef;S)$ and $\msevent(\samplelinerr,\fixeddesignmat,\lincoef;S)$ are defined in \eqref{eq:def:mserrset}--\eqref{eq:def:msevent}.
\end{lemma}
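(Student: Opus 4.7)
The plan is to reduce the event inclusion to the set inclusion for the noise vectors, and to prove the latter by a simple two-case argument based on comparing the optimal values of the objective over $S$ and over $U$. Throughout, fix $\fixeddesignmat$, $\lincoef$, and $\fixedlinerr$, set $\fixedy = \fixeddesignmat \lincoef + \fixedlinerr$, and define
\begin{align*}
f(\theta) := \frac{1}{2n}\norm{\fixedy - \fixeddesignmat \theta}_2^2 + \pl(\theta),
\quad
V_S := \inf_{\supp(\theta) \subset S} f(\theta),
\quad
V_U := \inf_{\supp(\theta) \subset U} f(\theta).
\end{align*}
Since $S \subset U$, the feasible region of the $U$-problem contains that of the $S$-problem, so $V_U \le V_S$.

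Now suppose $\fixedlinerr \in \mserrset(\fixeddesignmat,\lincoef;S)$, so that there exists $\hlincoef \in \abstractpls_\lambda(\fixedy,\fixeddesignmat;S)$ with $\supp(\hlincoef) \neq \supp(\lincoef)$. I would split into two cases. First, if $V_S = V_U$, then $\hlincoef$ is feasible in the $U$-restricted problem (its support lies in $S \subset U$) and achieves value $V_S = V_U$, so $\hlincoef \in \abstractpls_\lambda(\fixedy,\fixeddesignmat;U)$. Since $\supp(\hlincoef) \neq \supp(\lincoef)$, the noise $\fixedlinerr$ lies in $\mserrset(\fixeddesignmat,\lincoef;U)$.

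Second, if $V_S > V_U$, let $\hlincoef' \in \abstractpls_\lambda(\fixedy,\fixeddesignmat;U)$ be arbitrary. If we had $\supp(\hlincoef') \subset S$, then $\hlincoef'$ would be feasible in the $S$-problem with $f(\hlincoef') = V_U < V_S$, contradicting the definition of $V_S$. Therefore $\supp(\hlincoef') \not\subset S$, meaning there exists $k \in \supp(\hlincoef') \setminus S$. Because $\supp(\lincoef) \subset S$ by hypothesis, this $k$ lies outside $\supp(\lincoef)$, so $\supp(\hlincoef') \neq \supp(\lincoef)$, and again $\fixedlinerr \in \mserrset(\fixeddesignmat,\lincoef;U)$. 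This proves the set inclusion, and the event inclusion follows by taking $\fixedlinerr = \samplelinerr(\omega)$ pointwise.

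There is essentially no obstacle here beyond the correct handling of the two cases: the real content is recognizing that enlarging the feasible set can only hurt model selection in this one-sided way, since either the minimizer is unchanged (and still wrong), or the new minimizer must place mass on some index in $U \setminus S$ which is automatically outside $\supp(\lincoef)$. The inclusion $\supp(\lincoef) \subset S$ is used precisely to rule out the possibility that a new nonzero coordinate added in $U \setminus S$ happens to lie in $\supp(\lincoef)$.
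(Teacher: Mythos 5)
Your proof is correct. It establishes the same inclusion as the paper but in the direct rather than the contrapositive direction: the paper assumes $\fixedlinerr \in \mserrset(\fixeddesignmat,\lincoef;U)^c$ (every $U$-restricted minimizer has support $\supp(\lincoef)$) and shows every $S$-restricted minimizer must agree in value with a $U$-minimizer (forcing $V_S = V_U$) and hence be $U$-optimal, so it inherits the correct support; you instead start from $\fixedlinerr \in \mserrset(\fixeddesignmat,\lincoef;S)$ and split on whether $V_S = V_U$ or $V_S > V_U$. Your Case 1 is the paper's argument read forward; your Case 2 covers precisely the configuration that cannot occur under the paper's hypothesis (if some $U$-minimizer had support outside $S$, then since $\supp(\lincoef)\subset S$ it would already have wrong support). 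The underlying mechanism -- feasibility chaining between the nested constraint sets plus the fact that any new coordinate in $U\setminus S$ is automatically outside $\supp(\lincoef)$ -- is identical in both proofs, so they are different organizations of the same argument. What your version buys is that it makes explicit exactly where the hypothesis $\supp(\lincoef)\subset S$ is needed (only in Case 2); what the paper's version buys is brevity, since it avoids the case split by letting the contrapositive hypothesis quietly rule out $V_S > V_U$. Both share the same implicit assumption, present throughout the paper, that the argmin sets in Definition~\ref{defn:abstractregression} are nonempty, so this is not a gap attributable to you.
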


Intuitively, for a fixed support, the set of ``bad'' noise vectors for the larger problem involving $U$ is at least as big as the set of ``bad'' noise vectors for the smaller problem involving $S$. We are interested in the model selection failure of $\hnhbdcoef_j(S)$ for $\nhbdcoef_j(S)$, which can be stated as
\begin{align}
\Big\{ \supp(\hnhbdcoef_j(S)) &\neq \supp(\nhbdcoef_j(S)) \notag\\
&\text{ for some } \hnhbdcoef_j(S) \in \abstractpls_\lambda(\sampledagmatelem_j,\sampledagmat; S)\Big\}
 = \msevent(\sampledagerrcol_{j}(S), \sampledagmat, \nhbdcoef_j(S); S) \label{eq:msevet:356} 
\end{align}
in the notation introduced in~\eqref{eq:def:msevent}. The next result encapsulates a notion of \emph{monotonicity} that is used throughout the rest of the proof, establishing that the invariant sets for any fixed neighbourhood are monotone in the same sense as Lemma~\ref{lem:estim:supp:monot}.

\begin{cor}
\label{cor:ms:monotone}
Suppose $\sampledagmat\iid\normalN_{p}(0,\trueCov)$. For any $S \subset [p]_j$, we have
\begin{align*}
\msevent\Big(\sampledagerrcol_{j}(S), \sampledagmat, \nhbdcoef_j(S); S \Big)
\subset\msevent\Big(\sampledagerrcol_{j}(\nhbdmax_{j}(S)), \sampledagmat, \nhbdcoef_j(\nhbdmax_{j}(S));\nhbdmax_j(S) \Big).
\end{align*}
\end{cor}

\noindent
This proves that in order to control the neighbourhood regression problem for some set $S\subset[p]_{j}$, it suffices to control the strictly harder problem given by $\nhbdmax_{j}(S)$. Note that Lemma~\ref{cor:ms:monotone} is a \emph{deterministic} statement about the events defined in \eqref{eq:msevet:356}.

\subsubsection{A bound on false selection}
\label{subsubsec:proofs:bound}

For any $\trueCov\posdef 0$ and fixed node $\dagvec_{j}$, define the following collections of subsets:
\begin{align}
\label{eq:def:activesets}
\activesets_j(\trueCov) 
&:= \{ \nhbdsupp_j(S) :\; S\subset[p]_{j}\}, \\
\label{eq:def:maxsets}
\maxsets_j(\trueCov) 
&:= \{ \nhbdmax_j(S) :\; S\subset[p]_{j}\}.
\end{align} 

\noindent 
Note that $|\activesets_j(\trueCov)| = |\maxsets_j(\trueCov)|$. As long as it is clear whether the argument is a set $\dagcand$ or a matrix $\trueCov$, this should not cause any confusion with $\nhbdsupp_{j}(\dagcand)$ and $\nhbdmax_{j}(\dagcand)$. In Example~\ref{ex:mainexample}, $\activesets_3(\trueCov) = \{\emptyset, \{2\}, \{4\}, \{1,2\}, \{2,4\}, \{1,2,4\}\}$. In contrast, one can verify that $\activesets_2(\trueCov) = \{\emptyset, \{3\}, \{4\}, \{3,4\}, \{1,3,4\}\}$ which illustrates how some nodes have a reduced set of parent sets.

For any neighbourhood $S\subset[p]_{j}$, recall that the associated error variance is given by $\nhbdvar_{j}^{2}(S) = \var(\nhbderr_j(S))$. With some more abuse of notation, let
\begin{align}
\label{eq:def:nhbdmsexp}
\nhbdmsexp_{j}(S)
:= \msexp_{\lambda}(\sampledagmat_{S},\, (\nhbdcoef_{j}(S))_{S},\, \nhbdvar_{j}^{2}(S)).
\end{align}

\noindent
Note that we must restrict the SEM coefficients $\nhbdcoef_{j}(S)$ to the subset $S$ in order for this exponent to be well-defined. Since $\supp(\nhbdcoef_{j}(S))\subset S$, this does not change anything. 

The following general result gives a uniform upper bound on the probability of false selection for any neighbourhood problem in terms of the maximal sets $\nhbdmax_{j}(T)$, and is the main ingredient in proving Theorem~\ref{thm:main:supp}.

\begin{prop}
\label{prop:gen:reduct}
Fix $j\in[p]$. Under Condition~\ref{condn:evals}, we have
\begin{align*}
\pr \big(  \supp(\hnhbdcoef_j(S)) \neq \supp(\nhbdcoef_j(S)),\; \exists\,S\subset[p]_{j} \big)\;\;
\le\;\; \sum_{T \in \nhbdsupp_{j}(\trueCov)} \; \E e^{-\nhbdmsexp_j(\nhbdmax_{j}(T))},
\end{align*}

\noindent
where $\activesets_j(\trueCov)$ is defined by \eqref{eq:def:activesets} and $\nhbdmsexp_j(\,\cdot\,)$ is defined by \eqref{eq:def:nhbdmsexp}.
\end{prop}

\noindent
The proof of this result can be found in Appendix~\ref{app:prop:gen:reduct}.

Proposition~\ref{prop:gen:reduct} says that to control the probability of false selection uniformly for all $2^{p-1}$ neighbourhoods $S$ of the node $j$, it suffices to control a much smaller class of problems given by the neighbhourhoods $\nhbdmax_{j}(T)$ for each support set $T\in\nhbdsupp_{j}(\trueCov)$. By Definition~\ref{defn:eqclassparam}, $|\nhbdsupp_{j}(S)|\le d$ for all $j$ and $S$, which implies that $|\nhbdsupp_{j}(\trueCov)|\le \binom{p}{d} \le p^{d}$. Since there are $2^{p-1}$ subsets of $[p]_{j}$, $|\activesets_j(\trueCov)|\le p^{d}\ll 2^{p-1}$ as long as $d\ll p$; i.e. as long as $d$ is much smaller than $p$ the cardinality of $\activesets_j(\trueCov)$ is much smaller than that of $2^{[p]_{j}}$. All of a sudden, instead of a superexponential number of neighbourhood problems to control, we have a subexponential (i.e. polynomial) number to control.

\subsubsection{Proof of Theorem~\ref{thm:main:supp}}
\label{subsubsec:proofs:mainproof}

For any $T \in \nhbdsupp_{j}(\trueCov)$, Lemma~\ref{lem:estim:supp:monot} applied with $S = \nhbdmax_{j}(T)$ and $U = [p]$ yields
\begin{align*}
\nhbdmsexp_{j}(\nhbdmax_{j}(T))
&\ge \msexp_{\lambda}(\sampledagmat,\, \nhbdcoef_{j}(T),\, \nhbdvar_{j}^{2}(T)).
\end{align*}

\noindent
Recalling $d(\trueCov)$ and $\betamin(\trueCov)$ in Definition~\ref{defn:eqclassparam}, we have $\norm{\nhbdcoef_{j}(T)}_{0} \le d(\trueCov)$ and $\betamin(\nhbdcoef_{j}(T))\ge\betamin(\trueCov)$, as well as $\nhbdvar_{j}^{2}(T)\le\maxvar^2$. The previous expression combined with \eqref{eq:def:unifmsexp} implies:
\begin{align}
\label{eq:thm:main:supp:proof:1}
\nhbdmsexp_{j}(\nhbdmax_{j}(T))
\ge \unifmsexp_{\lambda}(\sampledagmat,\trueCov)
\quad\text{for all $T\in\maxsets_{j}(\trueCov)$.}
\end{align}

Combining Proposition~\ref{prop:gen:reduct}, \eqref{eq:thm:main:supp:proof:1} and a union bound over $j \in [p]$,
\begin{align}
\nonumber
\pr \big(  \supp(\hnhbdcoef_j(S)) &\neq \supp(\nhbdcoef_j(S)),\; \exists\,j\in[p],\,S \subset[p]_{j} \big) \\
\nonumber
&\le\;\; \sum_{j=1}^{p}\sum_{T \in \nhbdsupp_j(\trueCov)} \; \E \exp(-\nhbdmsexp_{j}(\nhbdmax_{j}(T)) \\
\label{eq:thm:msc:1}
&\le\;\; p\binom{p}{d} \; \E \exp(-\unifmsexp_{\lambda}(\sampledagmat,\trueCov)),
\end{align}

\noindent
since there are at most $\binom{p}{d}$ subsets in $\nhbdsupp_{j}(\trueCov)$. In order to complete the proof, note that Lemma~\ref{lem:restricted:nhbd} implies
\begin{align*}
\Big\{
\supp(\dagadjest(\perm)) &\neq \supp(\tB{\perm}),\; \exists\,\pi\in\allperms\Big\} \\
&= \Big\{
\supp(\dagcolest_{j}(\perm)) \neq \supp(\dagcol_{j}(\perm)),\; \exists\,\pi\in\allperms,\, j\in[p]
\Big\} \\
&= \Big\{
\supp(\dagcolest_{j}(\dagcand)) \neq \supp(\nhbdcoef_{j}(\dagcand)),\; \exists\,\dagcand\subset[p]_{j},\, j\in[p]
\Big\}.
\end{align*}

\noindent
In the last line we used $\dagcol_{j}(\perm) = \nhbdcoef_{j}(\dagcand_{j}(\pi))$. Combined with \eqref{eq:thm:msc:1}, this gives the desired result.

\subsection{Deviation bounds}
\label{subsec:proofs:dev}

In order to establish uniform control over the probability of false selection for all possible neighbourhood regression problems in the previous section, we relied on a monotonicity property (cf. Lemma~\ref{lem:estim:supp:monot}) of model selection. Unfortunately, this may not hold for weaker properties such as deviation bounds. In order to bound the deviations $\norm{\dagadjest(\perm)-\tB{\perm}}_{r}$ ($r=1,2$), we will use a modified argument that invokes a different kind of monotone class.

\subsubsection{An upper bound for fixed design}
We start by establishing a general bound on the $\ell_{r}$ ($r=1,2$) estimation errors for a fixed design regression problem with a general regularizer $\pl$. The objective here is to derive conditions under which we can guarantee such bounds for a fixed design problem, and then show that  these conditions hold uniformly for all neighbourhood problems. The conditions we will need are familiar from the literature: A \emph{Gaussian width condition} and a \emph{restricted eigenvalue condition}.

For the rest of this subsection, we let $\fixeddesignmat\in\R^{n\times m}$ and $\fixedlinerr\in\R^{n}$ be a fixed matrix and fixed vector, respectively.

\begin{defn}[Gaussian width]
\label{defn:gwcond}
We say that the \emph{Gaussian width (GW) condition} holds for $(\fixedlinerr,\fixeddesignmat)$ relative to $\pl$ if there is a numerical constant $\gwconst\in(0,1)$ such that 
\begin{align*}
\frac{1}{n}| \ip{\fixedlinerr,\fixeddesignmat \genu} |
\le \gwconst\bigg[\frac{1}{2n} \norm{\fixeddesignmat \genu}_{2}^{2} 
+ \pl(\genu)\bigg], \; \forall \genu\in\R^{m},
\end{align*}

\noindent
in which case we write $(\fixedlinerr,\fixeddesignmat)\in\GW_{\pl}(\gwconst)$. If this inequality is strict for all $u \neq 0$, we write $(\fixedlinerr,\fixeddesignmat)\in\GW_{\pl}^\circ(\gwconst)$.
\end{defn}

We will be interested in the case where both $\fixedlinerr$ and $\fixeddesignmat$ are allowed to be random but independent. In this setting, for Gaussian designs considered in this paper, the GW condition holds with high probability for the $\ell_{1}$ penalty (this follows from a standard H\"older inequality argument), and has similarly been shown to hold for penalties induced by $\ell_{q}$ norms for $0\le q\le 1$ \citep{raskutti2011}. \citet{zhang2012} provide a version of this condition that applies to general nonconvex regularizers.

Before we proceed, let us note the following key relation between model selection consistency and the GW condition:
\begin{lemma}
\label{lem:epms:equiv}
 Consider the setup of Lemma~\ref{lem:estim:supp:monot}, namely, the regression problem $\sampley = \fixeddesignmat \lincoef + \samplelinerr$ but with $\lincoef = 0$. Then
    \begin{align*}
    \msevent(\samplelinerr/\delta, \,Z, \,0)^c
    &= \big\{ (\samplelinerr,Z) \in \GW_{\pl}^\circ(\gwconst) \big\}.
    \end{align*}
\end{lemma}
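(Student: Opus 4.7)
The plan is to translate both sides of the claimed equality into statements about the objective function $f(\theta) := \frac{1}{2n}\|\samplelinerr/\delta - Z\theta\|_2^2 + \pl(\theta)$ and show that they coincide. On the model-selection side, with $\lincoef = 0$ we have $\supp(\lincoef) = \emptyset$, and the response $Z\lincoef + \samplelinerr/\delta$ entering the definition of $\mserrset$ reduces to $\samplelinerr/\delta$. Consequently $\msevent(\samplelinerr/\delta, Z, 0)^c$ is exactly the event that every element of $\abstractpls_\lambda(\samplelinerr/\delta, Z)$ has empty support, i.e.\ that $\theta = 0$ is the unique global minimizer of $f$.

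Next I would expand $f(\theta) - f(0)$ to obtain
\begin{align*}
f(\theta) - f(0) = -\tfrac{1}{n\delta}\ip{\samplelinerr,\,Z\theta} + \tfrac{1}{2n}\norm{Z\theta}_2^2 + \pl(\theta),
\end{align*}
using $\pl(0)=0$ from Condition~\ref{condn:pl:basic}(a). Thus ``$0$ is the unique minimizer of $f$'' is equivalent to the one-sided condition
\begin{align*}
\tfrac{1}{n}\ip{\samplelinerr,\,Z\theta} < \delta\bigl[\tfrac{1}{2n}\norm{Z\theta}_2^2 + \pl(\theta)\bigr] \quad\text{for every } \theta\neq 0. \tag{$\ast$}
\end{align*}

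The final step is to upgrade $(\ast)$ to the two-sided condition appearing in the strict GW definition. This is where coordinate-separability of $\pl$ plays its role: since $\pl(\theta) = \sum_{i,j}\pl(|\theta_{ij}|)$, the regularizer is even, $\pl(-\theta) = \pl(\theta)$; and $\norm{Z\theta}_2 = \norm{-Z\theta}_2$ trivially. Therefore applying $(\ast)$ to both $\theta$ and $-\theta$ (each nonzero when $\theta\neq 0$) and taking the maximum yields
\begin{align*}
\tfrac{1}{n}|\ip{\samplelinerr,\,Z\theta}| < \delta\bigl[\tfrac{1}{2n}\norm{Z\theta}_2^2 + \pl(\theta)\bigr]\quad\text{for every }\theta\neq 0,
\end{align*}
which is precisely $(\samplelinerr,Z)\in\GW_\pl^\circ(\delta)$. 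The reverse implication is immediate by dropping the absolute value. Combining both directions gives the stated set equality. I do not foresee a serious obstacle: the argument is purely algebraic, with the only substantive ingredient being the evenness of $\pl$ from Condition~\ref{condn:pl:basic} and the normalization $\pl(0)=0$.
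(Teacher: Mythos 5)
Your proof is correct and takes essentially the same approach as the paper: translate the complement of the model-selection-failure event into the statement that $0$ is the unique minimizer of the PLS objective with response $\samplelinerr/\gwconst$, expand the objective around $\theta=0$, and match the resulting inequality with the strict GW condition. The one thing you make explicit that the paper leaves implicit is the use of evenness of $\pl$ (via $\pm\theta$) to pass between the one-sided inequality coming from $f(\theta)>f(0)$ and the two-sided inequality with the absolute value in Definition~\ref{defn:gwcond}.
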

Thus, in order to ensure the GW condition for $(\samplelinerr,Z_S)$, it suffices to show that the corresponding regression problem is model selection consistent when the \emph{true coefficients are all set to zero} and the noise variance is inflated by a factor of $1/\gwconst^{2}$. \cite{zhang2012} refer to this property as \emph{null-consistency}.

\smallskip
For any set $A\subset[m]$ and $\xi>0$, define the following ``cone'':
\begin{align}
\label{eq:def:rescone}
\cone_{\pl}(A,\xi)
&:=\{\genu\in\R^m : \pl(\genu_{A^c}) \le \xi\pl(\genu_A)\}.
\end{align}

\noindent
This definition also depends on the ambient dimension $m$; when we wish to emphasize this we will write $\cone^{m}_{\pl}(A,\xi)$. The term ``cone'' here is used in an extended sense, in analogy with the $\ell_{1}$ cone found in previous work.

\begin{defn}[Generalized restricted eigenvalue]
\label{defn:genrec}
The \emph{generalized restricted eigenvalue (RE) constant} of $\fixeddesignmat$ with respect to $\pl$ over a subset $A$ is 
\begin{align}
\label{eq:defn:genrec}
\rec^{2}_{\pl}(\fixeddesignmat, A;\,\xi)
&:=\inf\left\{\frac{\norm{\fixeddesignmat \genu}_2^2}{n\norm{\genu}_2^2} : \genu\in \cone_{\pl}(A,\xi),\,u\ne 0\right\}.
\end{align}
\end{defn}

\noindent
In the sequel, we often suppress the dependence of the generalized RE constants on $\lambda$ and $\xi$, writing $\rec^{2}_{\pen}(\fixeddesignmat, A)=\rec^{2}_{\pl}(\fixeddesignmat, A;\xi)$. Note that the usual restricted eigenvalue is  equivalent to the special case $\pl=\lambda\norm{\cdot}_{1}$ \citep{bickel2009}. 

\medskip
Consider the usual linear regression set up, $\fixedy = \fixeddesignmat\lincoef + \fixedlinerr$,
where $\lincoef\in\R^{m}$ and we define $S^{*} = \supp(\lincoef)$. The following general result establishes that the two conditions $(\fixedlinerr,\fixeddesignmat)\in\GW_{\pen}(\gwconst)$ and $\rec_{\pen}^{2}(\fixeddesignmat,S^{*})>0$ are sufficient to bound the deviation $\hlincoef-\lincoef$:

\begin{thm}
\label{thm:fixed:l1l2bound}
Assume $(\fixedlinerr,\fixeddesignmat)\in\GW_{\pl}(\gwconst)$ for some $\pl$ satisfying Condition~\ref{condn:pl:basic} and $\gwconst\in(0,1)$. Let $\xi=\xi(\gwconst):=(1+\gwconst)/(1-\gwconst)$ and assume $\rec^{2} := \rec_{\pen}^2(\fixeddesignmat, S^{*};\,\xi) > 0$. Then any $\hlincoef\in\abstractpls_\lambda(\fixeddesignmat\lincoef+\fixedlinerr, \fixeddesignmat)$ satisfies
\begin{align}
\label{eq:L2bound}
\norm{\hlincoef - \lincoef}_{2}
&\le \frac{2\,\xi}{\rec^{2}}\,\penderiv\norm{\lincoef}_{0}^{1/2}, \\
\norm{\hlincoef-\lincoef}_{1}
\label{eq:L1bound}
&\le \frac{2\,\xi(1+\xi)}{\rec^{2}}\,\penderiv \norm{\lincoef}_{0}.
\end{align}
\end{thm}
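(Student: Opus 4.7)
The plan is to derive, from the optimality of $\hlincoef$, a single deterministic inequality that simultaneously produces a $\pl$-cone containment for $\genu := \hlincoef - \lincoef$ and a bound on the prediction error $\tfrac{1}{n}\|\fixeddesignmat\genu\|_2^2$. The generalized RE hypothesis then converts the prediction-error bound into the $\ell_2$ estimation bound, and the $\ell_1$ bound follows by combining the cone containment with $\ell_2$ control and Cauchy--Schwarz.

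First, since $\hlincoef\in\abstractpls_\lambda(\fixeddesignmat\lincoef+\fixedlinerr,\fixeddesignmat)$ minimizes the PLS objective over all of $\R^m$, the optimality inequality $\frac{1}{2n}\|\fixedy-\fixeddesignmat\hlincoef\|_2^2+\pl(\hlincoef) \le \frac{1}{2n}\|\fixedy-\fixeddesignmat\lincoef\|_2^2+\pl(\lincoef)$ holds. Substituting $\fixedy=\fixeddesignmat\lincoef+\fixedlinerr$ and expanding isolates a cross term $\tfrac{1}{n}\langle\fixedlinerr,\fixeddesignmat\genu\rangle$ that is controlled by $(\fixedlinerr,\fixeddesignmat)\in\GW_\pl(\gwconst)$. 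On the penalty side, coordinate-separability of $\pl$, the subadditivity inequality $\pl(|a+b|)\ge\pl(|a|)-\pl(|b|)$ (a consequence of $\pl$ concave, nondecreasing, $\pl(0)=0$), and the identity $\supp(\lincoef)=S^*$ give $\pl(\hlincoef)-\pl(\lincoef)\ge\pl(\genu_{(S^*)^c})-\pl(\genu_{S^*})$. Rearranging yields
\begin{align*}
\tfrac{1}{2n}\|\fixeddesignmat\genu\|_2^2 + \pl(\genu_{(S^*)^c}) \;\le\; \xi\,\pl(\genu_{S^*}),\qquad \xi=\tfrac{1+\gwconst}{1-\gwconst},
\end{align*}
which places $\genu$ in $\cone_\pl(S^*,\xi)$ and bounds the prediction error by $2\xi\,\pl(\genu_{S^*})$.

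Invoking RE on the cone gives $\rec^{2}\|\genu\|_2^2\le\tfrac{1}{n}\|\fixeddesignmat\genu\|_2^2\le 2\xi\,\pl(\genu_{S^*})$. The tangent-at-zero inequality $\pl(t)\le\penderiv t$ for $t\ge 0$ (from concavity and $\pl(0)=0$) together with Cauchy--Schwarz yields $\pl(\genu_{S^*})\le\penderiv\|\genu_{S^*}\|_1\le\penderiv\sqrt{|S^*|}\,\|\genu\|_2$, and dividing by $\|\genu\|_2$ produces \eqref{eq:L2bound}. For \eqref{eq:L1bound} I split $\|\genu\|_1=\|\genu_{S^*}\|_1+\|\genu_{(S^*)^c}\|_1$: the on-support piece is at most $\sqrt{|S^*|}\,\|\genu\|_2\le(2\xi\penderiv/\rec^{2})|S^*|$ by the just-derived $\ell_2$ bound, while for the off-support piece I combine the subadditive observation $\pl(\|\genu_{(S^*)^c}\|_1)\le\pl(\genu_{(S^*)^c})$, the $\pl$-cone containment $\pl(\genu_{(S^*)^c})\le\xi\,\pl(\genu_{S^*})\le\xi\penderiv\|\genu_{S^*}\|_1$, and the capped-$\ell_1$ minorant of Condition~\ref{condn:pl:basic}(c) to transfer cone control into an $\ell_1$-cone estimate $\|\genu_{(S^*)^c}\|_1\le\xi\|\genu_{S^*}\|_1$ up to absorbable constants; summing the two pieces produces \eqref{eq:L1bound}.

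The main obstacle is the $\ell_1$ step: for genuinely nonconvex $\pl$ such as MCP or SCAD, a $\pl$-cone containment does not translate directly into an $\ell_1$-cone containment as it does for the Lasso, so a two-regime analysis (small versus large coordinates of $\genu_{(S^*)^c}$) that uses both the concavity upper bound $\pl(t)\le\penderiv t$ and the capped-$\ell_1$ lower bound $\pl(t)\ge\min\{\lowlin\lambda t,\lowflat\lambda^2\}$ is required to keep the constants $\lowlin,\lowflat$ from blowing up and to recover the clean dependence $2\xi(1+\xi)$ in the statement. Everything else is a concavity-plus-subadditivity rearrangement.
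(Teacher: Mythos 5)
Your $\ell_2$ argument tracks the paper's proof almost line for line: optimality plus the GW condition plus subadditivity gives the cone containment and the bound $\frac{\rec^2}{2}\|\Delta\|_2^2 \le \xi\,\pl(\Delta_{S^*})$, and then $\pl(\Delta_{S^*}) \le \penderiv\|\Delta_{S^*}\|_1 \le \penderiv\sqrt{|S^*|}\,\|\Delta\|_2$ closes the loop. That part is fine.

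The $\ell_1$ step, however, has a genuine gap that your proposed ``two-regime analysis'' will not repair. You want to upgrade the $\pl$-cone containment $\pl(\Delta_{(S^*)^c}) \le \xi\,\pl(\Delta_{S^*})$ to the $\ell_1$-cone $\|\Delta_{(S^*)^c}\|_1 \le \xi\,\|\Delta_{S^*}\|_1$ by sandwiching $\pl$ between $\penderiv(\cdot)$ and its capped-$\ell_1$ minorant. But for a saturating penalty such as MCP the minorant $\pl(t) \ge \min\{\lowlin\lambda t,\lowflat\lambda^2\}$ becomes flat once $t$ is large: if a single off-support coordinate of $\Delta$ exceeds the saturation threshold, $\pl(\Delta_{(S^*)^c})$ is bounded (by $\uppflat\lambda^2$ per coordinate) regardless of how large that coordinate actually is, so the $\pl$-cone inequality places no upper bound at all on $\|\Delta_{(S^*)^c}\|_1$. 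No case split on ``small versus large coordinates'' escapes this: in the large-coordinate regime the minorant simply carries no information about $\|\Delta_{(S^*)^c}\|_1$. And even in the small-coordinate regime, inverting $\lowlin\lambda\|\Delta_{(S^*)^c}\|_1 \le \xi\penderiv\|\Delta_{S^*}\|_1$ introduces the factor $\penderiv/(\lowlin\lambda)$, which is not $1$ in general and therefore cannot reproduce the clean $2\xi(1+\xi)$ constant you claim.

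The paper's resolution is Lemma~\ref{lem:restrictedcone}: the $\pl$-cone over $S^*$ does not imply the $\ell_1$-cone over $S^*$, but it does imply the $\ell_1$-cone over \emph{some} set $M$ of the same cardinality $|M|=|S^*|$, namely the set of the $|S^*|$ largest coordinates of $|\Delta|$. The proof uses only that $x\mapsto\pl(x)/x$ is nonincreasing (a consequence of concavity and $\pl(0)=0$): with $\tau$ the threshold separating $M$ from $M^c$, one gets $\pl(\Delta_M)/\|\Delta_M\|_1 \le \pl(\tau)/\tau \le \pl(\Delta_{M^c})/\|\Delta_{M^c}\|_1$, hence $\|\Delta_{M^c}\|_1/\|\Delta_M\|_1 \le \pl(\Delta_{M^c})/\pl(\Delta_M) \le \pl(\Delta_{(S^*)^c})/\pl(\Delta_{S^*}) \le \xi$, where the middle inequality uses that $M$ maximizes $\pl(\cdot)$ over index sets of its size. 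Then $\|\Delta\|_1 \le (1+\xi)\|\Delta_M\|_1 \le (1+\xi)\sqrt{|S^*|}\,\|\Delta\|_2$ combined with the $\ell_2$ bound gives \eqref{eq:L1bound} with exactly the stated constants. The move from $S^*$ to $M$ is the idea your proposal is missing; without it the $\ell_1$ bound cannot be established by the route you outline.
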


\noindent
The proof of Theorem~\ref{thm:fixed:l1l2bound} is found in Appendix~\ref{app:thm:fixed:l1l2bound}. 
The GW condition is quantified by the constant $\gwconst\in(0,1)$, and the restricted eigenvalue condition depends on the free parameter $\xi>0$; these two are linked via the relation $\xi(\gwconst) = (1+\delta)/(1-\delta)$ and play subtle roles in the proof. A slightly modified version of this result first appeared in \citet{zhang2012}, under different assumptions. The particular version presented here is important to derive uniform bounds for all permutations---see Section~\ref{sec:unif:dev:bounds}.

\subsubsection{Uniform deviation bounds}
\label{sec:unif:dev:bounds}

Our strategy from here will be to show that the two sufficient conditions in Theorem~\ref{thm:fixed:l1l2bound}---namely, the GW condition and the generalized RE condition---hold uniformly for all $S$ with $w=\sampledagerrcol_{j}(S)$ and $Z=\select{\sampledagmat}{S}$.
First, we provide a uniform bound on the restricted eigenvalues $\rec_{\pen}^2(\sampledagmat_{\dagcand}, \nhbdsupp_{j}(\dagcand))$ in terms of the smallest eigenvalue of $\trueCov$. More precisely, we show in Proposition~\ref{prop:restricted:recprob} that with high probability $\rec_{\pen}^{2}(\sampledagmat_{S},A) \ge \mineval(\trueCov)$ for all pairs $A \subset S$ with $|A| \le \maxdeg$.

The next step is to show that with high probability, the GW condition holds for $(\sampledagerrcol_{j}(\dagcand),\sampledagmat_{\dagcand})$ for all $j$ and $S$. First, let us define
 \begin{align}
 \label{eq:Ejpi}
 \epevent_{S}(\gwconst,\lambda) =\epevent_{S}(\gwconst,\lambda; j)
 := \Big\{ \big(\sampledagerrcol_{j}(S), \select{\sampledagmat}{S}  \big) \in \GW_{\pl}^\circ(\gwconst)
 \Big\}.
 \end{align}

\noindent
According to Lemma~\ref{lem:epms:equiv}, we have $\epevent_{S}(\gwconst,\lambda) = \msevent\big(\sampledagerrcol_{j}(S)/\gwconst,\sampledagmat, 0; S \big)^{c}$. An immediate consequence is 
that the complements of the events $\epevent_{S}(\gwconst,\lambda)$ are monotonic in the sense that they obey Corollary~\ref{cor:ms:monotone} when $\nhbdcoef_{j}(S)$ is replaced with the zero vector. Analogous to the arguments leading up to Proposition~\ref{prop:gen:reduct} and its proof, this allows us to provide uniform control on $\epevent_{S}(\gwconst,\lambda)$, in a sense made precise by Proposition~\ref{prop:gwcond:probbound}. Once again, the key is to reduce the total number of events to control---\emph{a priori} superexponential in size---down to a tractable number which can be controlled.

Together, Propositions~\ref{prop:gwcond:probbound} and~\ref{prop:restricted:recprob} show that we have uniform control over both the restricted eigenvalues and the Gaussian widths for the neighbourhood problems $\nhbdproblem$. Thus, we can apply Theorem~\ref{thm:fixed:l1l2bound} to each of these problems and obtain, with high probability, deviation bounds of the form
\begin{align*}
\norm{\dagcolest_{j}(\dagcand)-\nhbdcoef_{j}(\dagcand)}_{2}
&\le \frac{2\,\xi}{\mineval(\trueCov)}\penderiv\cdot{\norm{\nhbdcoef_{j}(\dagcand)}_{0}^{1/2}}
\end{align*}

\noindent
for all $j$ and $S\subset[p]_j$. The precise statement can be found in Proposition~\ref{prop:abstract:dev}. Theorem~\ref{thm:main:dev} is an immediate consequence of these two results, the details of which are discussed in the next section.

\subsubsection{Proof of Theorem~\ref{thm:main:dev}}
\label{app:thm:main:dev}

Let us consider the $\ell_{2}$ bound, noting that the $\ell_{1}$ version follows similarly:
\begin{align}
\label{eq:prop:abstract:dev:L2}
\norm{\dagadjest-\tB{\estperm}}_{2}
&\le \frac{2\xi}{\mineval(\trueCov)}\penderiv\norm{\tB{\estperm}}_{0}^{1/2}.
\end{align}
\eqref{eq:prop:abstract:dev:L2} can be seen by applying Proposition~\ref{prop:abstract:dev} with $\dagcand = \dagcand_j(\estperm)$, and noting that
$\dagcolest_j(\estperm) = \dagcolest_j(\dagcand_j(\estperm))$, $\dagcol_j(\estperm) = \widetilde\beta_j(S_j(\estperm))$, and the fact that $\norm{\genu_j - \genv_j}_2 \le a \norm{\genv_j}_0^{1/2}$ for all $j$, implies $\sum_j \norm{\genu_j - \genv_j}_2^2 \le a^2 \sum_j \norm{\genv_j}_0$. The bound on the probability of~\eqref{eq:prop:abstract:dev:L2} follows by combining  Proposition~\ref{prop:abstract:dev} with Proposition~\ref{prop:GW:capped:ell1}.

\subsection{Score-based learning}
\label{subsec:proofs:scorebased}

We now show how Theorems~\ref{thm:main:supp} and~\ref{thm:main:dev} can be applied to derive the results in Section~\ref{subsec:app:scorebased}. Theorem~\ref{thm:scorebased} is an immediate corollary of Theorems~\ref{thm:main:supp} and~\ref{thm:main:dev} applied to the special case $\perm=\estperm$. Next, we control the sparsity of the estimate $\dagadjest$ and the candidate DAG $\tB{\estperm}$ via Theorems~\ref{thm:main:sparse} and~\ref{thm:oracle}.

We start with a kind of basic inequality that is adapted to the present, nonidentifiable setting:
\begin{lemma}
\label{lem:prelimbound}
Let $\bErr{\perm} := \sampledagmat - \sampledagmat\tB{\pi}$. For any $\perm\in\allperms$ and $\estperm\in\estperms$,
\begin{align}
\label{eq:basic:ineq}
\begin{split}
\frac{1}{2n}\norm{\sampledagmat(\tB{\estperm} - \dagadjest)}_{\frob}^2 + \pl(\dagadjest)
\;&\le\; \frac{1}{2n}\norm{\bErr{\perm}}_{\frob}^2 
- \frac{1}{2n}\norm{\bErr{\estperm}}_{\frob}^2 \\
&\qquad+ \frac{1}{n}\tr\left(\bErr{\estperm}^T\sampledagmat(\tB{\estperm} - \dagadjest)\right) \\
&\qquad+ \pl(\tB{\perm}).   
\end{split}
\end{align}
\end{lemma}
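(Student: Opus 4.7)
The plan is to derive~\eqref{eq:basic:ineq} from two ingredients only: the optimality of $\dagadjest$ as a global minimizer of $\plsobj$ over $\dagspace$, and a Pythagorean-type expansion of the residual at $\dagadjest$. First, I would invoke the definition of $\dagadjest$ directly: since $\tB{\perm}\in\dagspace$ for every $\perm\in\allperms$, the pointwise comparison $\plsobj(\dagadjest)\le\plsobj(\tB{\perm})$ yields
\begin{align*}
\frac{1}{2n}\norm{\sampledagmat-\sampledagmat\dagadjest}_{\frob}^{2}+\pl(\dagadjest)\;\le\;\frac{1}{2n}\norm{\bErr{\perm}}_{\frob}^{2}+\pl(\tB{\perm}).
\end{align*}
This ``master inequality'' is a one-line consequence of global optimality and carries all the information I will need from the definition of $\dagadjest$.

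The remainder of the argument is purely algebraic. For any $\estperm\in\estperms\subset\allperms$ (in particular, for $\estperm$ such that $\dagadjest=\dagadjest(\estperm)$), the identity
\begin{align*}
\sampledagmat-\sampledagmat\dagadjest\;=\;(\sampledagmat-\sampledagmat\tB{\estperm})+\sampledagmat(\tB{\estperm}-\dagadjest)\;=\;\bErr{\estperm}+\sampledagmat(\tB{\estperm}-\dagadjest)
\end{align*}
holds trivially. Expanding the squared Frobenius norm then produces the three-term decomposition
\begin{align*}
\norm{\sampledagmat-\sampledagmat\dagadjest}_{\frob}^{2}\;=\;\norm{\bErr{\estperm}}_{\frob}^{2}+2\tr\!\left(\bErr{\estperm}^{T}\sampledagmat(\tB{\estperm}-\dagadjest)\right)+\norm{\sampledagmat(\tB{\estperm}-\dagadjest)}_{\frob}^{2}.
\end{align*}
Substituting this expansion into the master inequality and isolating $(2n)^{-1}\norm{\sampledagmat(\tB{\estperm}-\dagadjest)}_{\frob}^{2}+\pl(\dagadjest)$ on the left produces~\eqref{eq:basic:ineq} after transposing the $\norm{\bErr{\estperm}}_{\frob}^{2}$ and trace contributions to the right.

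I do not expect any serious obstacle: there is no probabilistic content, and the entire argument is a deterministic manipulation with the sole subtlety being careful bookkeeping of signs when moving the trace cross term and the $\norm{\bErr{\estperm}}_{\frob}^{2}$ term across the inequality. The structural value of the lemma is that it replaces $\tB{\perm}$ by $\tB{\estperm}$ inside the prediction error on the left-hand side, which is precisely the form needed downstream: the prediction error $(2n)^{-1}\norm{\sampledagmat(\tB{\estperm}-\dagadjest)}_{\frob}^{2}$ will later be lower-bounded by $\norm{\dagadjest-\tB{\estperm}}_{2}^{2}$ via the uniform restricted-eigenvalue bound (Proposition~\ref{prop:restricted:recprob}), and the trace cross term will be controlled via the uniform Gaussian-width bound (Proposition~\ref{prop:gwcond:probbound}); both uniform controls are necessary precisely because $\estperm$ is itself a random index ranging over $\allperms$.
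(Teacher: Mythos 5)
Your approach matches the paper's exactly: global optimality gives $\plsobj(\dagadjest)\le\plsobj(\tB{\perm})$, and then you Pythagoras-expand the residual at $\dagadjest$ around $\bErr{\estperm}$. (The paper formally routes the comparison through $\plsobj(\dagadjest)\le\plsobj(\dagadjest(\perm))\le\plsobj(\tB{\perm})$, but only the endpoints are used, so your direct comparison is equivalent.)

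One correction, though, before you claim this ``produces~\eqref{eq:basic:ineq}'': carry your own algebra to the end. With $D:=\sampledagmat(\tB{\estperm}-\dagadjest)$ and $\sampledagmat-\sampledagmat\dagadjest=\bErr{\estperm}+D$, you get
\begin{align*}
\frac{1}{2n}\norm{\bErr{\estperm}}_{\frob}^{2}+\frac{1}{n}\tr\bigl(\bErr{\estperm}^{T}D\bigr)+\frac{1}{2n}\norm{D}_{\frob}^{2}+\pl(\dagadjest)
\;\le\;\frac{1}{2n}\norm{\bErr{\perm}}_{\frob}^{2}+\pl(\tB{\perm}),
\end{align*}
and transposing the $\norm{\bErr{\estperm}}_{\frob}^{2}$ and trace terms yields a \emph{minus} sign on $\frac{1}{n}\tr(\bErr{\estperm}^{T}D)$ on the right-hand side, not the plus sign appearing in~\eqref{eq:basic:ineq}. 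This discrepancy is not a mistake on your part: the paper's own proof contains a sign slip at the step where the expansion is written as $\norm{\sampledagmat(\tB{\estperm}-\dagadjest)-\sampledagerrmat(\estperm)}_{2}^{2}$ (it should be $+\sampledagerrmat(\estperm)$, since $\sampledagmat-\sampledagmat\dagadjest=D+\bErr{\estperm}$), and that slip propagates to the sign of the trace term in the stated lemma. The error is harmless downstream because the only place Lemma~\ref{lem:prelimbound} is consumed is Lemma~\ref{lem:trace}, which controls $\tfrac1n\bigl|\tr(\bErr{\estperm}^{T}\sampledagmat\Delh)\bigr|$ by the Gaussian-width bound; the absolute value erases the sign. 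So: your derivation is correct, the lemma as printed has a benign sign error, and you should either state your result with the minus sign or note explicitly that only $|\tr(\cdot)|$ matters in subsequent use.
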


\noindent
In contrast to the basic inequality used in usual regression,~\eqref{eq:basic:ineq} also leverages the many possible decompositions of $\sampledagmat$, one for each permutation. Three terms in this inequality are particularly important:
\begin{enumerate}\setlength\itemsep{5pt}
\item The difference in residuals $\norm{\bErr{\perm}}_{\frob}^2/(2n) - \norm{\bErr{\estperm}}_{\frob}^2/(2n)$ explains the origin of the minimum-trace permutation: We would like to make $\norm{\bErr{\perm}}_{\frob}^2/(2n)$ as small as possible in order to minimize this difference. By standard concentration arguments, $\norm{\bErr{\perm}}_{\frob}^2/n$ is close to its expectation, $\tr\tOm{\perm}$ (cf.~\eqref{eq:exploss}). Hence, we choose $\pi$ to minimize $\tr \tOm{\perm}$. The details of this argument are in Appendix~\ref{app:resbound}; the explicit upper bound we use is detailed in Proposition~\ref{prop:mintrace:prob}.

\item \sloppypar{The quantity $\tr(\bErr{\estperm}^T\sampledagmat(\tB{\estperm} - \dagadjest))/n$ can be bounded using the Gaussian width condition (Definition~\ref{defn:gwcond}). There is a subtlety regarding whether to decompose this along rows or columns; see Lemma~\ref{lem:trace}.}

\item The penalty on $\dagadjest$ can be replaced with $\pl(\tB{\estperm})$ by showing that $\pl(\dagadjest)\ges\pl(\tB{\estperm})$ (Lemma~\ref{lem:hBlowerbound}). This is true whenever the deviation $\dagadjest-\tB{\estperm}$ is not too large, which is guaranteed by Theorem~\ref{thm:main:dev}. 
\end{enumerate}

Once we have established control of these three terms, the details of which are found in Appendix~\ref{app:sparse}, it is not hard to prove Theorems~\ref{thm:main:sparse} and~\ref{thm:oracle}.
More precisely, for Theorem~\ref{thm:main:sparse}, we have the following bound in terms of the constants $\gwconst$ (cf. Definition~\ref{defn:gwcond}) and $\mintraceconst$ (cf. Condition~\ref{condn:mintrace:bound}):
\begin{align*}
\frac{2\gwconst}{1-\gwconst}\pl(\tB{\estperm})
\le \pl(\dagadjest)
\le \frac{2}{1-\gwconst}\Big(1+\frac{10}{\mintraceconst}\Big)\pl(\tB{\goodperm}).
\end{align*}

\noindent
The precise statement and proof are given by Proposition~\ref{prop:abstract:sparse}, and Theorem~\ref{thm:main:sparse} follows as a consequence of this bound and an $\ell_{0}$-compatibility argument (see Appendix~\ref{app:thm:main:sparse}). The oracle inequality Theorem~\ref{thm:oracle} follows from a straightforward manipulation of \eqref{eq:basic:ineq} combined with a Gaussian concentration argument (Lemma~\ref{lem:pi:tracebound}), see Appendix~\ref{app:thm:oracle} for details.

\subsection{Causal DAG learning}
\label{subsec:proofs:causal}

The main result in Section~\ref{subsec:app:causal} is Theorem~\ref{thm:mintr}, whose proof is similar to the proofs in the previous section and follows mostly from Lemma~\ref{lem:prelimbound}. The complete proof is in Appendix~\ref{app:thm:mintr}.

\subsection{Conditional independence learning}
\label{subsec:proofs:ci}

Theorem~\ref{thm:ci} from Section~\ref{subsec:app:ci} follows almost immediately from Theorem~\ref{thm:main:supp}. For completeness, we include a proof below.

\subsubsection{Proof of Theorem~\ref{thm:ci}}
Recall that $\tB{\perm}$ is always a minimal I-map for $\normalN_p(0,\trueCov)$ and $\bigcup_{\perm\, \in\, \allperms}\ci(\perm) = \ci(\trueCov)$ \citep[e.g.][]{koller2009}. Theorem~\ref{thm:main:supp}, combined with Lemma~\ref{lem:mcp:example}, implies that 
\begin{align*}
\pr \Bigg( \bigcap_{\perm\in\allperms}\big\{\supp(\dagadjest(\perm)) = \supp(\tB{\perm})\big\} \Bigg)
\ge 1-\probbound.
\end{align*}

\noindent
But $\supp(\dagadjest(\perm)) = \supp(\tB{\perm})$ implies $\widehat{\ci}(\perm)=\ci(\perm)$ (i.e. $d$-separation is a property of the graph alone and is independent of the exact values of the parameters). Since $\tB{\perm}$ is a minimal I-map for each $\perm$, $\dagadjest(\perm)$ must also be a minimal I-map. Furthermore, we deduce that
\begin{align*}
\pr \Bigg( \bigcup_{\perm\in\allperms}\widehat{\ci}(\perm) = \bigcup_{\perm\in\allperms}\ci(\perm) \Bigg)
&\ge \pr \Bigg( \bigcap_{\perm\in\allperms}\big\{\widehat{\ci}(\perm)=\ci(\perm)\big\} \Bigg) \\
&\ge \pr \Bigg( \bigcap_{\perm\in\allperms}\big\{\supp(\dagadjest(\perm)) = \supp(\tB{\perm})\big\} \Bigg) \\
&\ge 1-\probbound.
\end{align*}

\noindent
Using $\bigcup_{\perm\, \in\, \allperms}\ci(\perm) = \ci(\trueCov)$ completes the proof.

\appendix

\section{Proofs of technical results}
\label{app:tech}

\subsection{Proof of Lemma~\ref{lem:eqclass}}
\label{app:lem:eqclass}

We need the following simple lemma, which follows since $P_\pi A = P A P^T$ for some permutation matrix $P$:
\begin{lemma}
\label{lem:cholperm}
 $A= MNM^T \iff \permmat_\pi A=(\permmat_\pi M)(\permmat_\pi N) (\permmat_\pi M)^T$.
\end{lemma}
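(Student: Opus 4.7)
The plan is to exploit the matrix representation of the permutation operator and the orthogonality of permutation matrices. As noted in the paper's hint, for any square matrix $A \in \mathbb{R}^{p\times p}$ one has $\permmat_\pi A = P A P^T$, where $P$ is the orthogonal permutation matrix associated with $\pi$, satisfying $P^T P = P P^T = I$. The first (short) step is to verify this identity by a direct index computation: with the convention $P_{ij} = \delta_{j,\pi(i)}$, we get $(PAP^T)_{ij} = A_{\pi(i)\pi(j)}$, which matches the definition $(\permmat_\pi A)_{ij}=a_{\pi(i)\pi(j)}$ from the notation section.

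For the forward direction, assume $A = MNM^T$ and apply $P(\,\cdot\,)P^T$ to both sides. Inserting $I = P^T P$ between the factors yields
\[
PAP^T \;=\; PMNM^TP^T \;=\; (PMP^T)(PNP^T)(PM^TP^T) \;=\; (PMP^T)(PNP^T)(PMP^T)^T,
\]
where the last equality uses $(PMP^T)^T = PM^T P^T$. Rewriting each conjugate in terms of $\permmat_\pi$ gives the desired identity $\permmat_\pi A = (\permmat_\pi M)(\permmat_\pi N)(\permmat_\pi M)^T$. For the converse, apply $P^T(\,\cdot\,)P$ to both sides of the permuted identity; the same insertion trick, together with $P^TP=I$, peels off the outer factors and returns $A = MNM^T$.

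There is no substantive obstacle here; the only thing to get right is the bookkeeping of the permutation matrix convention (so that the map $A \mapsto PAP^T$ really implements $(i,j)\mapsto(\pi(i),\pi(j))$ on the indices), which is a one-line verification. Once that is in place, the statement is a purely formal consequence of $P^TP=I$ and $(PMP^T)^T=PM^TP^T$.
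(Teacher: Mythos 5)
Your proof is correct and follows exactly the route the paper indicates (the paper only remarks that the lemma ``follows since $P_\pi A = PAP^T$'' without spelling out the details, which you fill in correctly via $P^TP = PP^T = I$ and $(PMP^T)^T = PM^TP^T$).
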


Recall the modified Cholesky decomposition of $A$ (also called the LDLT decomposition): $A=LDL^T$ for a lower triangular matrix $L$, {with unit diagonal entries}, and a diagonal matrix $D$. When $A$ is positive definite, the pair $(L,D)$ is unique and we refer to it as the \emph{Cholesky decomposition of $A$}.

\smallskip
Let us denote the set of all pairs $(\dagnopi,\varnopi)$ satisfying $\Sigma^{-1} =  (I - \dagnopi) \varnopi^{-1} (I-\dagnopi^T)$ (equivalently, \eqref{eq:def:sigmafcn}) as $\eqclass'$. Next, note that  $\dagnopi \in \dagspace$ if and only if $P_\pi \dagnopi$ is lower triangular for some permutation $\pi$. Lemma~\ref{lem:cholperm} implies that $(\dagnopi,\varnopi) \in \eqclass'$ iff $(I-P_\pi \dagnopi, P_\pi\varnopi^{-1})$ is a Cholesky decomposition of $P_\pi \Sigma^{-1}$ for some $\pi$. 

By the definition~\eqref{eq:def:tB}, $(I-P_\pi\tB{\perm}, P_\pi\tOm{\perm}^{-1})$ is also a  Cholesky decomposition of $P_\pi \Sigma^{-1}$. Since the Cholesky decomposition is unique for positive definite matrices, we have $(\dagnopi,\varnopi) \in \eqclass'$ iff $(\dagnopi,\varnopi) = (\tB{\perm}, \tOm{\perm})$ for some $\pi$, which gives the desired result, since $\eqclass(\Sigma)$ is the projection of $\eqclass'$ onto its first coordinate. 

\subsection{Proof of Lemma~\ref{lem:equalvar}}
\label{app:lem:equalvar}

Consider the following program:
\begin{align}
\label{eq:lem:equalvar:program}
\min\sum_{j=1}^{p} x_j^2
\,\text{ subject to }\,
\sum_{j=1}^{p} \log x_j^2 = C.
\end{align}

\noindent
The solution to this program is given by $x_j^2=e^{C/p}$ for all $j=1,\ldots,p$. In other words, the minimum is attained by a constant vector. It is straightforward to verify that $\log\det\tOm{\pi}=\log\det\trueCov$ and hence $\log\det\tOm{\pi}=\sum_j \log\dagerrvar_{j}^{2}(\pi)$ is constant for all $\pi\in\allperms$. Thus for any $\perm\in\allperms$, the vector $(\dagerrvar_1^2(\pi),\ldots,\dagerrvar_p^2(\pi))\in\R^p$ is feasible for \eqref{eq:lem:equalvar:program}, which implies that $\tr\tOm{\pi}$ is minimized whenever $\dagerrvar_1^2(\pi)=\cdots=\dagerrvar_p^2(\pi)$. Finally, uniqueness of $\tB{\goodperm}$ follows from Theorem~1 in \citet{peters2013}.

\subsection{Proof of Lemma~\ref{lem:mcp:example}}
\label{app:lem:mcp:example}

This is a consequence of Theorem~4.2 in \cite{huang2012} and Proposition~2 in \cite{zhang2008}. Briefly, \cite{huang2012} show that the least-squares MCP estimator correctly recovers the support of a linear model as long as the so-called \emph{sparse Riesz condition} holds. We then use \cite{zhang2008} to bound the probability that $\sampledagmat$ satisfies this condition. For the special case $\nhbdcoef_j(\dagcand)=0$ \citep[which is not covered by][]{huang2012} we can invoke Proposition~\ref{prop:gwcond:probbound}.

\subsection{Proof of Lemma~\ref{lem:restricted:nhbd}}
\label{app:lem:restricted:nhbd}

The first conclusion (a) follows from elementary properties of conditional expectation and the identity
\begin{align*}
\E(\dagvec_{j}\|\dagvec_{S_{j}(\pi)})
=\dagcol_{j}(\pi)^{T}\dagvec.
\end{align*}

\noindent
Since (c) is a special case of (b), it suffices to prove (b). 

Fix $\pi\in\allperms$ and let $S_{j}=\dagcand_{j}(\pi)$. If $\dagcolest_{j}(\pi)\in\abstractpls_{\lambda}(\sampledagmatelem_{j},\sampledagmat;S_{j})$ for each $j$, then evidently $\dagadjest(\pi) = [\,\dagcolest_{1}(\pi)\|\cdots\|\dagcolest_{p}(\pi)\,]$ minimizes $Q(\gendag)$ over $\subdagspace{\perm}$. For the reverse direction, recall that $\select{\sampledagmat}{S_{j}}$ is the $n\times |S_{j}|$ matrix formed by extracting the columns in $S_{j}$, and similarly for $(\beta_{j})_{S_{j}}$. For any $B\in\subdagspace{\pi}$ we have $(\beta_{j})_{S_{j}^{c}} = 0$ for each $j$, so we can write
\begin{align*}
\frac{1}{2n}\norm{\sampledagmat-\sampledagmat B}_{\frob}^{2} + \pl(B)
&= \sum_{j=1}^{p}\Big\{\frac{1}{2n}\norm{\sampledagmatelem_{j}-\sampledagmat\beta_{j}}_{2}^{2} + \pl(\beta_{j})\Big\} \\
&= \sum_{j=1}^{p}\Big\{\frac{1}{2n}\norm{\sampledagmatelem_{j}-\select{\sampledagmat}{S_{j}}(\beta_{j})_{S_{j}}}_{2}^{2} + \pl((\beta_{j})_{S_{j}})\Big\}.
\end{align*}

\noindent
Then $\dagadjest(\pi) = [\,\dagcolest_{1}(\pi)\|\cdots\|\dagcolest_{p}(\pi)\,]\in \min_{\subdagspace{\perm}}\plsobj(\gendag)$ if and only if
\begin{align*}
\dagcolest_{j}(\pi)
\in \argmin_\beta \frac{1}{2n}\norm{\sampledagmatelem_{j}-\sampledagmat\beta}_{2}^{2} + \pl(\beta)
\quad\text{subject to}\,\,\beta_{S_j^c}=0.
\end{align*}

\noindent
In other words, $\dagcolest_{j}(\pi)\in\abstractpls_{\lambda}(\sampledagmatelem_{j},\sampledagmat;S_{j})$ for each $j$. Since $\pi$ was arbitrary, the desired claim follows.

\subsection{Proof of Lemma~\ref{lem:lattice}}
\label{app:lem:lattice}

The proof relies on the following property of $L^2$ projections: For any two sets $S,R \subset [p]_j$, we have
\begin{align}
\label{eq:L2proj}
    \nhbdcoef_j(S \cup R) = \nhbdcoef_j(S) \iff \nhbderr_j(S) \independent \dagvec_{i}, \; \forall i \in R.
\end{align}

\def\Ss{S^{*}}
To lighten the notation, let $\Ss = \nhbdsupp_j(S)$. Note that $\nhbdcoef_j(S) = \nhbdcoef_j(\Ss)$ since $\supp(\nhbdcoef_j(S)) = \Ss$. It follows from \eqref{eq:L2proj} that $\nhbderr_j(\Ss) \independent \dagvec_{i}$ for $i \in S \setminus \Ss$. Similarly, since $\supp(\nhbdcoef_j(T_k)) = \Ss$, we have  $\nhbderr_j(\Ss) \independent \dagvec_{i}$ for $i \in T_k \setminus \Ss$ and $k = 1,2$. It follows that
\begin{align*}
\nhbderr_j(\Ss) \independent \dagvec_{i}, \; \forall i \in (T_1 \setminus \Ss) \cup (T_2 \setminus \Ss)
\end{align*}

\noindent
hence the application of \eqref{eq:L2proj} in the reverse direction yields
\begin{align*}
\nhbdcoef_j(T_1 \cup T_2) 
&= \nhbdcoef_j \big( \Ss \cup (T_1 \setminus \Ss) \cup (T_2\setminus \Ss)\big) 
= \nhbdcoef_j(\Ss) 
= \nhbdcoef_j(S).
\end{align*}

\subsection{Proof of Lemma~\ref{lem:estim:supp:monot}}
\label{app:lem:estim:supp:monot}

It suffices to show 
$$\mserrset(\fixeddesignmat,\lincoef;U)^{c}\subset\mserrset(\fixeddesignmat,\lincoef;S)^{c}.$$ 
Suppose $\fixedlinerr\in\mserrset(\fixeddesignmat,\lincoef;U)^{c}$, i.e., $\supp(\widetilde{\theta})=\supp(\lincoef):=S^{*}$ for any $\widetilde{\theta}\in\abstractpls_{\lambda}(\fixeddesignmat\lincoef+\fixedlinerr,\fixeddesignmat;U)$. We wish to show that for any $\hlincoef\in\abstractpls_{\lambda}(\fixeddesignmat\lincoef+\fixedlinerr,\fixeddesignmat;S)$, it must also be true that $\supp(\hlincoef)=S^{*}$. Let
\begin{align*}
F(\theta)
= \frac{1}{2n} \norm{\fixeddesignmat(\lincoef-\theta)+\fixedlinerr}_2^2 + \pl(\theta)
\end{align*}
denote the objective function in Definition~\ref{defn:abstractregression} of $\abstractpls_\lambda(\fixedy, \fixeddesignmat; S)$  with $\fixedy=\fixeddesignmat\lincoef+\fixedlinerr$.
Since $\supp(\hlincoef)\subset S\subset U$, $\hlincoef$ is feasible for the $U$-restricted problem, whence
\begin{align*}
F(\widetilde{\theta}) \le F(\hlincoef) 
\end{align*}

\noindent
for any $\widetilde{\theta}\in\abstractpls_{\lambda}(\fixeddesignmat\lincoef+\fixedlinerr,\fixeddesignmat;U)$. But $\widetilde{\theta}$ is also feasible for the $S$-restricted problem since $\supp(\widetilde{\theta}) = S^{*}\subset S$, so that 
\begin{align*}
F(\widetilde{\theta}) \ge F(\hlincoef)
\implies F(\widetilde{\theta}) = F(\hlincoef).
\end{align*}

\noindent
Since the value $F(\widetilde{\theta})$ is by definition the global minimum of $F$ for the $U$-restricted problem and $\supp(\hlincoef)\subset U$, $\hlincoef$ must be a global minimizer of $F$ for the $U$-restricted problem, i.e., $\hlincoef\in\abstractpls_{\lambda}(\fixeddesignmat\lincoef+\fixedlinerr,\fixeddesignmat;U)$, whence $\supp(\hlincoef)=S^{*}$ as desired.

\subsection{Proof of Corollary~\ref{cor:ms:monotone}}
\label{app:cor:ms:monotone}

By Lemma~\ref{lem:estim:supp:monot} and the fact that $S\subset \nhbdmax_{j}(S)$, we have
\begin{align}
\label{eq:cor:ms:monotone:1}
\msevent\Big(\sampledagerrcol_{j}(S), \sampledagmat, \nhbdcoef_j(S); S \Big)
&\subset \msevent\Big(\sampledagerrcol_{j}(S), \sampledagmat, \nhbdcoef_j(S); \nhbdmax_{j}(S)\Big).
\end{align}

\noindent
Using \eqref{eq:invariant:nhbdcoef} and \eqref{eq:invariant:nhbderr}, we have the following identity:
\begin{align*}
\msevent\Big(\sampledagerrcol_{j}(S), \sampledagmat, \nhbdcoef_j(S); \nhbdmax_{j}(S)\Big)
&= \msevent\Big(\sampledagerrcol_{j}(\nhbdmax_{j}(S)), \sampledagmat, \nhbdcoef_j(\nhbdmax_{j}(S)); \nhbdmax_{j}(S)\Big).
\end{align*}

\noindent
Plugging this into \eqref{eq:cor:ms:monotone:1} yields the desired result.

\subsection{Proof of Proposition~\ref{prop:gen:reduct}}
\label{app:prop:gen:reduct}

Throughout, for simplicity, let
\begin{align*}
    \msevent_S :=  \msevent(\sampledagerrcol_{j}(S),\sampledagmat, \nhbdcoef_j(S); S).
\end{align*}
Fix $S\subset[p]_j$ and let $\lincoef=\nhbdcoef_{j}(S)$, $s^{*}=|\nhbdsupp_{j}(S)|=\norm{\lincoef}_{0}$ and $\eps^{*}=\sampledagerrcol_{j}(S)$ so that $\msevent_S = \msevent(\eps^*,\sampledagmat, \lincoef; S)$. Note that $\msevent(\eps^*,\sampledagmat, \lincoef; S)$ represents the following model selection failure:
\begin{align*}
\supp(\hlincoef) \ne \supp(\lincoef)
\quad\exists\,\hlincoef\in\abstractpls_{\lambda}(\sampledagmat\lincoef  + \eps^{*}, \sampledagmat; S).
\end{align*}
Since $\supp(\lincoef)\subset S$, we can restrict $\sampledagmat$ and $\lincoef$ to $S$, so that the above is equivalent to
\begin{align*}
\supp(\hlincoef) \ne \supp(\lincoef_{S})
\quad\exists\,\hlincoef\in\abstractpls_{\lambda}(\select{\sampledagmat}{S}\lincoef_{S}  + \eps^{*}, \select{\sampledagmat}{S}).
\end{align*}
which is the same event as $\msevent(\eps^*, \select{\sampledagmat}{S}, \lincoef_S)$. To summarize, $\msevent_S = \msevent(\eps^*, \select{\sampledagmat}{S}, \lincoef_S)$.

Since $\eps^*$ is independent of $\select{\sampledagmat}{S}$ by Lemma~\ref{lem:restricted:nhbd}(a), by conditioning on $\select{\sampledagmat}{S}$ we are dealing with a fixed design regression problem with Gaussian noise $\eps^* = \sampledagerrcol_{j}(S) \sim \normalN_n(0,\nhbdvar_{j}^2(S) I_n)$. We obtain
\begin{align*}
\pr(\msevent_S) 
&= \E\Big[\pr \big( \msevent(\eps^*, \select{\sampledagmat}{S}, \lincoef_S) \big) \; \big| \; \select{\sampledagmat}{S}\big)\Big] \\
&\le \E\exp[-\msexp_\lambda(\select{\sampledagmat}{S},\select{\lincoef}{S},\nhbdvar_{j}^{2}(S))] \\
&= \E\exp(-\nhbdmsexp_{j}(S)),
\end{align*}
where the last line uses \eqref{eq:def:nhbdmsexp}. Now we have 
\begin{align*}
    \{\supp(\hnhbdcoef_j(S)) &\neq \supp(\nhbdcoef_j(S)),\; \exists\,S\subset[p]_{j} \}  = \bigcup_{S\subset[p]_j} \msevent_{S} \; \subset\; \bigcup_{T \in \nhbdsupp_{j}(\trueCov)} \msevent_{\nhbdmax_j(T)},
\end{align*}
where the equality is by~\eqref{eq:msevet:356} and the inclusion follows from Corollary~\ref{cor:ms:monotone}. Note that this is the key step where the reduction occurs. Hence, we have
\begin{align*}
\pr \big( \bigcup_{S\subset[p]_j} \msevent_{S} \big)
&\le \pr \Big(\bigcup_{T \in \nhbdsupp_{j}(\trueCov)} \msevent_{\nhbdmax_j(T)} \Big) \\
&\le \sum_{T \in \nhbdsupp_{j}(\trueCov)} \pr(\msevent_{\nhbdmax_j(T)} ) 
\le \sum_{T \in \nhbdsupp_{j}(\trueCov)}  \E\exp(-\nhbdmsexp_{j}(\nhbdmax_j(T))),
\end{align*}
which is the desired probability bound.

\subsection{Proof of Theorem~\ref{thm:fixed:l1l2bound}}
\label{app:thm:fixed:l1l2bound}

Recall that $S^{*}:=\supp(\lincoef)$. To lighten notation, for any vector $\genu$ let $\genu_{1} := \genu_{S^{*}}$, $\genu_{2}:=\genu_{(S^{*})^{c}}$, and also $\Delta:=\hlincoef-\lincoef$. Then invoking the subadditivity of $\pl$ (this is a consequence of Condition~\ref{condn:pl:basic}),
\begin{align}
\pl(\hlincoef) - \pl(\lincoef) 
&= \pl(\Delta+\lincoef) - \pl(\lincoef) \notag \\
&=  \pl(\Delta_1+\lincoef_1) + \pl(\Delta_2) -\pl(\lincoef_1) \notag \\
&\ge -\pl(\Delta_1) + \pl(\Delta_2).\label{eq:temp:475}
\end{align}

It is straightforward to derive 
\begin{align}
\label{eq:thm:fixed:l1l2bound:1}
\frac{1}{2n} \norm{\fixedy - \fixeddesignmat\hlincoef}_2^2
- \frac{1}{2n} \norm{\fixedy - \fixeddesignmat\lincoef}_2^2
= \frac{1}{2n}\norm{\fixeddesignmat \Delta}^2  
- \frac{1}{n} \ip{\fixedlinerr,\fixeddesignmat \Delta}.
\end{align}

\noindent
Since $(\fixedlinerr,\fixeddesignmat)\in\GW_{\pl}(\gwconst)$, we can invoke the GW condition with $\genu=\Delta$,
\begin{align}
\label{eq:thm:fixed:l1l2bound:2}
-\frac{1}{n} \ip{\fixedlinerr,\fixeddesignmat \Delta} 
\ge -\frac{1}{n}| \ip{\fixedlinerr,\fixeddesignmat \Delta} |
\ge -\gwconst\frac{1}{2n} \norm{\fixeddesignmat \Delta}^2 
- \gwconst\pl(\Delta).
\end{align}
It follows that
\begin{align}
0
&\ge \frac{1}{2n} \norm{\fixedy - \fixeddesignmat\hlincoef}_2^2
- \frac{1}{2n} \norm{\fixedy - \fixeddesignmat\lincoef}_2^2
+ \pl(\hlincoef) - \pl(\lincoef) \notag \\
&\ge \frac{1}{2n}\norm{\fixeddesignmat \Delta}^2  - \frac{1}{n} \ip{\fixedlinerr,\fixeddesignmat \Delta} -\pl(\Delta_1) + \pl(\Delta_2) \notag \\
&\ge \frac{1-\gwconst}{2n}\norm{\fixeddesignmat \Delta}^2 - \gwconst\pl(\Delta) -\pl(\Delta_1) + \pl(\Delta_2) \notag \\
&= \frac{1-\gwconst}{2n}\norm{\fixeddesignmat \Delta}^2 - (1+\gwconst)\pl(\Delta_{1}) + (1-\gwconst)\pl(\Delta_2) \notag \\
\label{eq:thm:fixed:l1l2bound:4}
&= (1-\gwconst)\Big[\frac{1}{2n}\norm{\fixeddesignmat \Delta}^2 + \pl(\Delta_{2}) - \xi\pl(\Delta_{1})\Big],
\end{align}

\noindent
where the first inequality by optimality of $\hlincoef$, the second by~\eqref{eq:thm:fixed:l1l2bound:1}, and the third by~\eqref{eq:thm:fixed:l1l2bound:2}. The next line follows from an an application of $\pl(\Delta) = \pl(\Delta_{1}) + \pl(\Delta_{2})$. Since $\gwconst<1$ by assumption, it follows that $\pl(\Delta_2) \le \xi\pl(\Delta_{1})$ which implies  $\Delta\in\rescone(S^{*},\xi(\gwconst))$.

\smallskip
Recalling the definition \eqref{eq:defn:genrec} of $\rec_{\pen}^2(\fixeddesignmat, S^{*})$, we conclude that
$\frac{1}{2n}\norm{\fixeddesignmat \Delta}^2
\ge \frac{\rec^{2}}{2} \norm{\Delta}_{2}^{2}
$
which combined with \eqref{eq:thm:fixed:l1l2bound:4}, dropping $\pl(\Delta_2)$, gives
\begin{align*}
0 \; \ge \; \frac{\rec^{2}}{2}\norm{\Delta}_{2}^{2} - \xi\pl(\Delta_{1}).
\end{align*}
Combining with the following (note $\norm{\Delta_{1}}_0 \le \norm{\lincoef}_0$),
\begin{align}
\label{eq:thm:fixed:l1l2bound:8}
\pl(\Delta_{1}) \le \penderiv\norm{\Delta_{1}}_{1}\le\penderiv\norm{\lincoef}_{0}^{1/2}\norm{\Delta}_{2}
\end{align}
and re-arranging proves~\eqref{eq:L2bound}.
For \eqref{eq:L1bound}, since $\Delta\in\rescone(S^{*},\xi(\gwconst))$, we can use Lemma~\ref{lem:restrictedcone} to construct a set $M\subset[p]$ with $|M|=|S^{*}| = \norm{\lincoef}_{0}$ such that $\Delta\in\cone_{1}(M,\xi(\gwconst))$. Then
\begin{align*}
\norm{\Delta}_{1}
= \norm{\Delta_{M}}_{1} + \norm{\Delta_{M^{c}}}_{1} 
&\le (1+\xi)\norm{\Delta_{M}}_{1} \\
&\le (1+\xi)\norm{\lincoef}_{0}^{1/2}\norm{\Delta_{M}}_{2} \\
&\le \frac{2\,\xi(1+\xi)}{\rec^{2}}\cdot\penderiv \norm{\lincoef}_{0}.\qedhere
\end{align*}

\begin{remark}
\label{rem:ell0}
Theorem~\ref{thm:fixed:l1l2bound} also applies to the $\ell_{0}$ penalty, even though Condition~\ref{condn:pl:basic} does not hold. Simply replace \eqref{eq:thm:fixed:l1l2bound:8} with
\begin{align*}
\pl(\Delta_{1}) 
\le \uppflat\lambda^{2}\norm{\Delta_{1}}_{0}
\le \uppflat\lambda^{2}\norm{\lincoef}_{0}.
\end{align*}

\noindent
The rest of the proof requires no changes. The resulting $\ell_{2}$ bound \eqref{eq:L2bound} becomes
\begin{align*}
\norm{\hlincoef-\lincoef}_{2}
&\le \sqrt{\frac{2\uppflat\xi}{\rec^2}}\cdot\lambda\norm{\lincoef}_{0}^{1/2}.
\end{align*}

\noindent
The proof of Lemma~\ref{lem:hBlowerbound} in Appendix~\ref{app:sparse} also simplifies significantly under the $\ell_{0}$ penalty.
\end{remark}

\subsection{Proof of Lemma~\ref{lem:epms:equiv}}
\label{app:lem:epms:equiv}

If $(\samplelinerr,\fixeddesignmat) \in \GW_{\pl}^\circ(\gwconst)$, then for any $\genu\ne0$,
\begin{align*}
\frac{\gwconst}{2n}\norm{\fixeddesignmat\genu}_{2}^{2} -\frac{1}{n}\samplelinerr^{T}\fixeddesignmat\genu + \gwconst\pl(\genu) 
&> 0 \\
\iff
\frac{1}{2n}\norm{\samplelinerr/\gwconst - \fixeddesignmat\genu}_{2}^{2} + \pl(\genu)
&> \frac{1}{2n}\norm{\samplelinerr/\gwconst}_{2}^{2}.
\end{align*}

\noindent
The latter inequality implies  
\begin{align*}
\{0\}
&= \argmin_{\genu}\norm{\samplelinerr/\gwconst - \fixeddesignmat\genu}_{2}^{2}/(2n) + \pl(\genu),
\end{align*}
that is, $0$ is the unique global minimizer of the right hand side. Recalling the definition of $\msevent\big(\samplelinerr/\gwconst,\fixeddesignmat, 0\big)$ in \eqref{eq:def:msevent}, we obtain the desired result.

\subsection{Proof of Lemma~\ref{lem:prelimbound}}
\label{app:lem:prelimbound}

Observe that for any $\perm\in\allperms$,
\begin{align}
\label{eq:lem:prelimbound:1}
\plsobj(\dagadjest)
\le \plsobj(\dagadjest(\perm))
\le \plsobj(\tB{\perm}),
\end{align}

\noindent
where $\dagadjest(\perm)$ is a restricted minimizer as in \eqref{eq:def:restrictedmin}. Moreover, we have the following alternative expression for $\plsobj$: 
\begin{align}
\label{eq:lem:prelimbound:2}
\plsobj(\gendag)
= \frac{1}{2n}\norm{\sampledagmat(\tB{\estperm} - \gendag) + \sampledagerrmat(\estperm)}_{2}^{2} + \pl(\gendag),
\quad\text{for any $\estperm\in\estperms$.}
\end{align}

\noindent
Thus, using \eqref{eq:lem:prelimbound:1} and \eqref{eq:lem:prelimbound:2},
\begin{align*}
0 
&\le \plsobj(\tB{\perm}) - \plsobj(\dagadjest) \\
&= \frac{1}{2n}\norm{\sampledagerrmat(\perm)}_{2}^{2} 
- \frac{1}{2n}\norm{\sampledagmat(\tB{\estperm} - \dagadjest) - \sampledagerrmat(\estperm)}_{2}^{2}
+ \pl(\tB{\perm}) - \pl(\dagadjest)\\
&= \frac{1}{2n}\norm{\sampledagerrmat(\perm)}_{2}^{2} - \frac{1}{2n}\norm{\sampledagerrmat(\estperm)}_{2}^{2}
- \frac{1}{2n}\norm{\sampledagmat(\tB{\estperm} - \dagadjest)}_{2}^{2} \\
&\qquad + \frac1n\tr\left(\bErr{\estperm}^T\sampledagmat(\tB{\estperm} - \dagadjest)\right) 
+ \pl(\tB{\perm}) - \pl(\dagadjest).
\end{align*}

\noindent
Since \eqref{eq:lem:prelimbound:1} holds for any $\perm$, this completes the proof.

\section{Auxiliary results for deviation bounds}
\label{app:dev}

This section contains the bulk of our technical results on controlling the deviations $\hnhbdcoef_j(S) - \nhbdcoef_j(S)$ for a neighbourhood problem. These constitute the main ingredients used in proving Theorem~\ref{thm:main:dev}, which we also do in this section. 

For any $\gwconst\in(0,1)$ and $\lambda\ge0$, define the following events:
\begin{align}
\label{eq:def:epevent}
\epevent(\gwconst, \lambda)
&= \Big\{ \big(\sampledagerrcol_{j}(S), \select{\sampledagmat}{S}  \big) \in \GW_{\pl}^\circ(\gwconst),  \,\,\forall j\in[p],S\subset[p]_{j}
 \Big\}, \\
 \label{eq:def:recevent}
\recevent(\gwconst)
&= \Big\{
\rec_{\pen}^2(\select{\sampledagmat}{S}, \nhbdsupp_{j}(S)) \ge \mineval(\trueCov) > 0, \,\,\forall j\in[p],S\subset[p]_{j}
\Big\}. 
\end{align}

\noindent
Note that $\epevent(\gwconst, \lambda) = \bigcap_{j=1}^{p}\bigcap_{S\subset[p]_j}\mE_{S}(\gwconst,\lambda;j)$, where $\epevent_{S}(\gwconst,\lambda;j)$ was defined in \eqref{eq:Ejpi}.

The structure of the proofs will be to show that on one or both of these events, the desired conclusions follow. Explicit bounds on the probabilities of these events are established in  Section~\ref{app:devnhbd}.

\subsection{Uniform deviation bounds}
\label{app:devnhbd}

As discussed in Section~\ref{sec:unif:dev:bounds}, one of the main ingredients in proving Theorem~\ref{thm:main:dev} is a general result regarding deviation bounds for neighbourhood problems, given by Proposition~\ref{prop:abstract:dev} below. The other ingredient is Proposition~\ref{prop:GW:capped:ell1} below, regarding the behavior of certain model selection exponents defined in analogy with \eqref{eq:def:unifmsexp}:
\begin{align}
\label{eq:def:unifgwexp}
\unifgwexp_{\lambda}(\sampledagmat,\maxvar^2; \gwconst)
:= \inf_{0 \,\le \,\sigma\, \le\, \maxvar\;} \msexp_{\lambda}(\sampledagmat,\, 0,\, \sigma^2/\gwconst^{2}).
\end{align}
We often suppress the dependence on $\delta$ and write $\unifgwexp_{\lambda}(\sampledagmat,\maxvar^2)$.
Note that, in view of Lemma~\ref{lem:epms:equiv}, $\unifgwexp_{\lambda}(\sampledagmat,\maxvar^2)
$ describes the conditional probability, given $\sampledagmat$, that $(\sigma \samplelinerr,\sampledagmat)$ violates a GW condition, where $\samplelinerr \sim \normalN_n(0,I_n)$ is independent of $\sampledagmat$. More precisely,
\begin{align*}
\sup_{0\le\sigma\le\maxvar} \pr\big[ (\sigma \samplelinerr,\sampledagmat) \notin \GW^\circ_{\pl}(\delta) \; \big|\; \sampledagmat\big] 
&\;=\; \sup_{0\le\sigma\le\maxvar} \exp[-\msexp_{\lambda}(\sampledagmat,\, 0,\, \sigma^2/\gwconst^{2})] \\
&\;=\; \exp[-\unifgwexp_{\lambda}(\sampledagmat,\maxvar^2)].
\end{align*}

We also recall the relation
\begin{align}
 \label{eq:def:xi}
 \xi=\xi(\gwconst)
 =\frac{1+\gwconst}{1-\gwconst}.
\end{align}

\begin{prop}
\label{prop:abstract:dev}
Assume that $\trueCov$ satisfies Condition~\ref{condn:evals} and $\pl$ satisfies Condition~\ref{condn:pl:basic}. Suppose $\sampledagmat\iid\normalN_{p}(0,\trueCov)$, $\gwconst\in(0,1)$, and define $\xi$ by \eqref{eq:def:xi}. Then there exist constants $c_{0},c_{1},c_{2}>0$ such that the following holds:
If
\begin{align*}
n 
&> c_{0}\frac{\maxvar^{2}(1+\xi)^{2}}{\mineval(\trueCov)}d\log p,
\end{align*}
then with probability at least $1 - c_{1}\exp(-c_{2}n) - p\binom{p}{d}\E\exp(-\unifgwexp_{\lambda}(\sampledagmat,\maxvar^2;\delta))$,
\begin{align}
\label{eq:prop:abstract:dev:col:L2}
\norm{\dagcolest_{j}(\dagcand)-\nhbdcoef_{j}(\dagcand)}_{2}
&\le \frac{2\,\xi}{\mineval(\trueCov)}\penderiv\cdot\norm{\nhbdcoef_{j}(\dagcand)}_{0}^{1/2}, \\
\label{eq:prop:abstract:dev:col:L1}
\norm{\dagcolest_{j}(\dagcand)-\nhbdcoef_{j}(\dagcand)}_{1}
&\le \frac{2\,\xi(1+\xi)}{\mineval(\trueCov)}\penderiv\cdot\norm{\nhbdcoef_{j}(\dagcand)}_{0},
\end{align}

\noindent
uniformly over all $j\in[p]$ and $S\subset[p]_{j}$.
\end{prop}
For future reference, inspection of the proof shows that the conclusion of Proposition~\ref{prop:abstract:dev} holds on $\epevent(\gwconst, \lambda) \cap \recevent(\gwconst)$.
For regularizers that satisfy the lower bound in Condition~\ref{condn:pl:basic}(c) we have the following control on the exponent $\unifgwexp_{\lambda}(\sampledagmat,\maxvar^2)$:
\begin{prop}
\label{prop:GW:capped:ell1}
    Assume that $\sampledagmat\iid\normalN_{p}(0,\trueCov)$, and that $\pl$ satisfies Condition~\ref{condn:pl:basic}(c). Then there exist constants $c>0$ and $C = C(\lowlin,\lowflat)$ such that for any $\gwconst\in(0,1)$, if
    \begin{align}
        \lambda \ge C \delta^{-1}  \maxvar \norm{\trueCov}^{1/4} \sqrt{\frac{(d+1)\log p}{n}}
    \end{align}
    then $\E\exp(-\unifgwexp_{\lambda}(\sampledagmat,\maxvar^2;\delta)) \le c\exp(-\min\{2(d+1)\log p, n\})$.
\end{prop}

\noindent
The proof of Proposition~\ref{prop:GW:capped:ell1} follows from an argument similar to that in~\cite{zhang2012} and is omitted for brevity. In order to prove  Proposition~\ref{prop:abstract:dev}, we need the following two intermediate results, providing uniform control on RE constants and GW conditions. Recall $\epevent(\gwconst, \lambda)$ as defined in~\eqref{eq:def:epevent}.

\begin{prop}[Uniform GW control]
\label{prop:gwcond:probbound}
For any $\delta \in (0,1)$ and $\lambda > 0$,
\begin{align*}
\pr [\epevent(\gwconst, \lambda)]
&\ge 1 - p\binom{p}{d}\E\exp\big[{-\unifgwexp_{\lambda}(\sampledagmat,\maxvar^2;\gwconst})\big].
\end{align*}
\end{prop}

\def\mF{\mathcal{F}}
\begin{proof}
Fix $\delta \in (0,1)$. By analogy with \eqref{eq:def:nhbdmsexp}, for any neighbourhood $S\subset[p]_{j}$, let
\begin{align}
\label{eq:def:nhbdgwexp}
\nhbdgwexp_{j}(S)
    \;:= \;\msexp_{\lambda}(\sampledagmat_{S},\, 0,\, \nhbdvar_{j}^{2}(S)/\gwconst^{2}) 
    \;\ge\; \unifgwexp_{\lambda}(\sampledagmat,\maxvar^2; \gwconst),
\end{align}
where the inequality follows from \eqref{eq:def:unifgwexp} and $\nhbdvar_{j}^{2}(S)\le\maxvar^{2}$. We follow the proof of Proposition~\ref{prop:gen:reduct}, but with $\beta_j(S)$ replaced with 0, and $\sampledagerrcol_{j}(S)$ replaced with $\sampledagerrcol_{j}(S)/\delta$. To simplify, let $\mF^j_S = \epevent_{S}(\gwconst,\lambda; j)^c$ and $\epevent = \epevent(\gwconst,\lambda)$. By the comment following~\eqref{eq:Ejpi},
\begin{align*}
    \mF^j_S = \msevent\big(\sampledagerrcol_{j}(S)/\gwconst,\sampledagmat, 0; S \big) = 
    \msevent\big(\sampledagerrcol_{j}(S)/\gwconst,\sampledagmat_S, 0 \big)
\end{align*}
where the second equality is by the same argument in the proof of Proposition~\ref{prop:gen:reduct}. Since $\sampledagerrcol_{j}(S)/\gwconst \sim \normalN\big(0,[\nhbdvar_{j}^{2}(S)/\gwconst^{2}] I_n\big)$ independent of $\sampledagmat_S$, we conclude, using Definition~\ref{defn:msexp}, that 
\begin{align*}
    \pr \big(\mF^j_S \mid \sampledagmat_S\big) = \exp[- \xi_j(S)],
\end{align*}
hence $\pr(\mF^j_S) \le \E \exp [ - \unifgwexp_{\lambda}(\sampledagmat,\maxvar^2)]$, $\forall S \subset [p]_j$, using the inequality in~\eqref{eq:def:nhbdgwexp}.
The events $\mF^j_S$ are monotonic in $S$ according to Corollary~\ref{cor:ms:monotone}. (The division of $\nhbderr_j(S)$ by $\delta$ does not change anything in that proof.) It follows that 
\begin{align*}
    \epevent^c = \bigcup_{j=1}^p \,\bigcup_{S \,\subset\, [p]_j} \mF^j_S
        \;\;\subset\;\; \bigcup_{j=1}^p \; \bigcup_{T \,\in\, m_j(\trueCov)} \mF_{M_j(T)}^j.
\end{align*}
Taking the union bound, and using $ |m_j(\trueCov)| \le \binom{p}{d}$ and 
\begin{align*}
    \pr\big[\,\mF_{M_j(T)}^j \,\big] \le \E \exp [ - \unifgwexp_{\lambda}(\sampledagmat,\maxvar^2)], \quad \forall\, T \in m_j(\trueCov),
\end{align*}
 finishes the proof.
\end{proof}

\begin{prop}[Uniform RE control]
\label{prop:restricted:recprob}
Assume $\sampledagmat\iid\normalN_{p}(0,\trueCov)$, $\trueCov$ satisfies Condition~\ref{condn:evals} and $\pl$ satisfies Condition~\ref{condn:pl:basic}. There exist universal constants $c_{0},c_{1},c_{2}>0$, such that if
\begin{align*}
n > c_{0}\,\frac{\maxvar^{2}(1+\xi)^2}{\mineval(\trueCov)}d(\trueCov)\log p
\end{align*}

\noindent
then with probability at least $1-c_{1}\exp(-c_{2}n)$, 
\begin{align*}
\inf_{1\le j\le p}\, \inf_{S \,\subset\, [p]_{j}}\, \inf_{\substack{A \,\subset\, S\\|A|\le \maxdeg}} \; \rec_{\pen}^{2}(\sampledagmat_{S},A;\,\xi)
\;\ge\; \mineval(\trueCov).
\end{align*}
\end{prop}
The proof of this proposition appears in Section~\ref{subsec:proof:unif:RE} below. Recalling the definition of $\recevent(\gwconst)$ in \eqref{eq:def:recevent}, combined with $m_j(S) = \norm{\beta_j(S)}_0 \le d$ (cf. Definition~\ref{defn:eqclassparam}), Proposition~\ref{prop:restricted:recprob} implies that $\recevent(\gwconst)$ holds with probability at least $1-c_{1}\exp(-c_{2}n)$.
Let us show how Proposition~\ref{prop:abstract:dev} follows.

\begin{proof}[Proof of Proposition~\ref{prop:abstract:dev}]
\label{app:}

Recall the definitions of $\epevent(\gwconst, \lambda)$ in \eqref{eq:def:epevent} and $\recevent(\gwconst)$ in \eqref{eq:def:recevent}. Propositions~\ref{prop:gwcond:probbound} and~\ref{prop:restricted:recprob} guarantee that 
\begin{align*}
\pr\big(
\recevent(\gwconst) \cap \epevent(\gwconst, \lambda)
\big)
\ge 1 - c_{1}\exp(-c_{2}n) - p\binom{p}{d}\E\exp(-\unifgwexp_{\lambda}(\sampledagmat,\maxvar^2;\gwconst)).
\end{align*}

\noindent
Thus, it suffices to deduce \eqref{eq:prop:abstract:dev:col:L2} and \eqref{eq:prop:abstract:dev:col:L1} whenever we are on the event $\recevent(\gwconst) \cap \epevent(\gwconst, \lambda)$. The case $\nhbdcoef_j(S)=0$ follows from \eqref{eq:Ejpi} and Lemma~\ref{lem:epms:equiv}, and the case $\nhbdcoef_j(S)\ne0$ follows from Theorem~\ref{thm:fixed:l1l2bound} applied to the corresponding neighbourhood regression problems.
\end{proof}

\subsection{Uniform control of restricted eigenvalues}
\label{subsec:proof:unif:RE}
In this section, we collect the necessary results that lead to the proof of Proposition~\ref{prop:restricted:recprob}. We begin with a definition which generalizes the familiar ($\ell_{1}$) restricted eigenvalue \citep{bickel2009,raskutti2011}:
\begin{defn}
\label{defn:re:conditionZ}
$\fixeddesignmat\in\R^{n\times m}$ satisfies a \emph{generalized restricted eigenvalue condition of order $k$ w.r.t. $\pl$} with parameters $\recparam,\xi>0$, denoted as $\fixeddesignmat\in\REC_{\pl}(k,\recparam;\xi)$, if
\begin{align*}
\frac{1}{n}\norm{\fixeddesignmat\genu}_{2}^{2}\ge\recparam^{2}\norm{\genu}_{2}^{2}
\quad\forall\genu\in\cone_{\pl}(A,\xi),
\end{align*}
uniformly for all $A\subset[m]$ with $|A|=k$. Equivalently, recalling Definition~\ref{defn:genrec},
\begin{align*}
\fixeddesignmat\in\REC_{\pl}(k,\recparam;\xi) \iff 
    \inf_{\substack{A\subset[m] \\|A|=k}}\rec_{\pl}^{2}(\fixeddesignmat, A, \xi) 
\ge \recparam^{2}.
\end{align*}
\end{defn}

In the sequel, we will suppress the dependence of various quantities on $\lambda$, $\xi$ and $m$, when no confusion arises. For example, we will write $\pl=\pen$, $\REC_{\pen}(k,\recparam)=\REC_{\pl}(k,\recparam;\xi)$ or $\cone_{\pen}(A) = \cone^{m}_{\pl}(A,\xi)$.
The following lemma collects some simple consequences of these definitions.
\begin{lemma} The following hold:
\label{lem:RECcontain}
\begin{itemize}\itemsep=1ex
    \item[(a)] When $\pen$ is nondecreasing,
    \begin{itemize}
        \item $A'\subset A \implies \cone_{\pen}(A')\subset\cone_{\pen}(A)$,
        \item $A'\subset A \implies \rec_{\pen}^{2}(\fixeddesignmat, A')\ge\rec_{\pen}^{2}(\fixeddesignmat, A)$,
        \item $\fixeddesignmat\in\REC_{\pen}(k,\recparam) \implies \rec_{\pen}^{2}(\fixeddesignmat, A) \ge \alpha^2, \; \forall A:\;|A| \le k$,
        \item $k' \le k \implies \REC_{\pen}(k,\recparam) \subset \REC_{\pen}(k',\recparam) $.
    \end{itemize}
    \item[(b)]  $\fixeddesignmat\in\REC_{\pen}(k,\recparam) \implies \fixeddesignmat_{S}\in\REC_{\pen}(k \wedge |S|,\recparam)$.
\end{itemize}
\end{lemma}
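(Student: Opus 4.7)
The plan is to exploit two structural features of $\pen$: it is coordinate-separable and nondecreasing (both follow from the paper's convention $\pen(B) = \sum_{ij}\pen(|B_{ij}|)$ together with Condition~\ref{condn:pl:basic}). Once these are in hand, each assertion reduces to a short decomposition of support sets plus monotonicity of infimums.

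For part~(a), I would start with the first bullet. Given $A' \subset A$ and $\genu \in \cone_{\pen}(A')$, decompose via $(A')^c = A^c \sqcup (A \setminus A')$ and $A = A' \sqcup (A \setminus A')$, so by coordinate-separability $\pen(\genu_{(A')^c}) = \pen(\genu_{A^c}) + \pen(\genu_{A \setminus A'})$ and $\pen(\genu_A) = \pen(\genu_{A'}) + \pen(\genu_{A \setminus A'})$. Combining with $\pen(\genu_{(A')^c}) \le \xi\,\pen(\genu_{A'})$ and nonnegativity of $\pen$ gives
\begin{align*}
\pen(\genu_{A^c}) \;\le\; \xi\,\pen(\genu_{A'}) - \pen(\genu_{A \setminus A'}) \;\le\; \xi\,\pen(\genu_A),
\end{align*}
so $\genu \in \cone_{\pen}(A)$. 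The second bullet is then immediate, since $\rec_{\pen}^{2}(\fixeddesignmat, A)$ is the infimum of the same functional over a subset of the set defining $\rec_{\pen}^{2}(\fixeddesignmat, A')$. For the third bullet, pad any $A$ of size at most $k$ to some $A^{*} \supset A$ with $|A^{*}| = k$ and apply the first two bullets to obtain $\rec_{\pen}^{2}(\fixeddesignmat, A) \ge \rec_{\pen}^{2}(\fixeddesignmat, A^{*}) \ge \recparam^2$. The fourth bullet follows directly from the third, by specializing to sets of size exactly $k'$.

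For part~(b), the idea is to lift from $\R^{|S|}$ to $\R^m$ by zero-padding. Given $A \subset [|S|]$ with $|A| = k \wedge |S|$ and $\genv \in \cone^{|S|}_{\pen}(A, \xi)$, identify $A$ with its image $\widetilde{A} \subset S \subset [m]$, extend $\widetilde{A}$ to some $A' \subset [m]$ of exact size $k$ with $\widetilde{A} \subset A'$ (always possible since $k \le m$), and let $\widetilde{\genv} \in \R^m$ be the zero-extension of $\genv$ outside $S$. The inclusions $(A')^c \cap S \subset S \setminus \widetilde{A}$ and $A' \cap S \supset \widetilde{A}$, together with coordinate-separability, $\pen(0)=0$, and nonnegativity of $\pen$, yield $\pen(\widetilde{\genv}_{(A')^c}) \le \pen(\genv_{A^c})$ and $\pen(\widetilde{\genv}_{A'}) \ge \pen(\genv_A)$, so $\widetilde{\genv} \in \cone^m_{\pen}(A', \xi)$. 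Applying the RE inequality for $\fixeddesignmat$ at $\widetilde{\genv}$, and using $\norm{\fixeddesignmat \widetilde{\genv}}_2 = \norm{\fixeddesignmat_S \genv}_2$ and $\norm{\widetilde{\genv}}_2 = \norm{\genv}_2$, gives the desired bound on $\fixeddesignmat_S$.

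The only step that needs a bit of care is the indexing in~(b): ensuring that an $A'$ of exact size $k$ containing $\widetilde{A}$ exists, and that cone membership transfers correctly across the zero-extension. No step is technically difficult; the content of the lemma is really bookkeeping about how the generalized cones behave under enlargement of the indexing set and under column restriction of the design.
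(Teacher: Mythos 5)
Your proof is correct, and since the paper states Lemma~\ref{lem:RECcontain} without proof (merely calling the claims ``simple consequences of these definitions''), your argument supplies the natural filling-in of exactly the kind the authors must have had in mind. The coordinate-separability decomposition $(A')^c = A^c \sqcup (A\setminus A')$ and $A = A' \sqcup (A\setminus A')$ is the right mechanism for part~(a), and the zero-extension trick for part~(b) is the standard way to relate $\rec^2_\pen$ for a column-restricted design to that of the full design. Two minor remarks: in part~(a) you only use nonnegativity of $\pen$ together with coordinate-separability, which is weaker than the stated nondecreasing hypothesis, so no gap there; and in part~(b) the padding set $A'$ of exact size $k$ containing $\widetilde A$ exists because $|\widetilde A| = k\wedge|S| \le k \le m$, which you correctly flag. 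Everything checks out.
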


The next result shows that we can control the generalized RE constants for $\select{\fixeddesignmat}{S}$ uniformly by a suitable generalized RE constant for $\fixeddesignmat$:

\begin{lemma}
\label{lem:rec:uniformfixed}
If $\fixeddesignmat\in\REC_{\pen}(\maxdeg,\recparam)$, then
\begin{align}
\label{eq:lem:rec:uniformfixed}
\inf_{1\le j\le m}\; \inf_{S\,\subset\, [m]_{j}}\; \inf_{\substack{A\, \subset\, S \\ |A|\, \le\, \maxdeg}}\rec_{\pen}^{2}(\fixeddesignmat_{S},A)
\ge \recparam^{2}.
\end{align}
\end{lemma}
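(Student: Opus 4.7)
The plan is to derive the inequality as a direct consequence of Lemma~\ref{lem:RECcontain}, since the quantities involved are almost by construction. Fix arbitrary $j \in [m]$, $S \subset [m]_{j}$, and $A \subset S$ with $|A| \le \maxdeg$. I will show that under the hypothesis $\fixeddesignmat \in \REC_{\pen}(\maxdeg,\recparam)$, we have $\rec_{\pen}^{2}(\fixeddesignmat_{S},A) \ge \recparam^{2}$ and then take infima.

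First I would apply Lemma~\ref{lem:RECcontain}(b) to the hypothesis $\fixeddesignmat \in \REC_{\pen}(\maxdeg,\recparam)$ to conclude that $\fixeddesignmat_{S} \in \REC_{\pen}(\maxdeg \wedge |S|,\recparam)$, i.e., the restricted eigenvalue condition descends from the full design to the column-restricted design $\fixeddesignmat_{S}$, with order possibly truncated by $|S|$. Next, since $A \subset S$ implies $|A| \le |S|$, together with $|A| \le \maxdeg$ by assumption, I get $|A| \le \maxdeg \wedge |S|$. The third bullet of Lemma~\ref{lem:RECcontain}(a), applied to $\fixeddesignmat_{S}$ at order $\maxdeg \wedge |S|$, then gives
\begin{align*}
    \rec_{\pen}^{2}(\fixeddesignmat_{S}, A) \;\ge\; \recparam^{2}.
\end{align*}
Since the triple $(j,S,A)$ was arbitrary subject to the stated constraints, taking the infimum over all such triples yields \eqref{eq:lem:rec:uniformfixed}.

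The only subtlety worth checking is that the cone $\cone_{\pen}(A)$ appearing in the definition of $\rec_{\pen}^{2}(\fixeddesignmat_{S},A)$ lives in the ambient dimension $|S|$ rather than $m$; this is precisely the setting in which Lemma~\ref{lem:RECcontain}(b) is stated, so no adjustment is needed. There is essentially no obstacle here—the entire content is bookkeeping over the parameters $(j,S,A)$, and the two parts of Lemma~\ref{lem:RECcontain} together do all the real work.
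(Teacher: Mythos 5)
Your proof is correct and follows essentially the same route as the paper's: both apply Lemma~\ref{lem:RECcontain}(b) to descend to $\fixeddesignmat_S \in \REC_\pen(\maxdeg \wedge |S|, \recparam)$ and then conclude $\rec_\pen^2(\fixeddesignmat_S, A) \ge \recparam^2$ from $|A| \le \maxdeg \wedge |S|$; you invoke the third bullet of Lemma~\ref{lem:RECcontain}(a) directly, while the paper first drops to $\REC_\pen(|A|,\recparam)$ via the fourth bullet, which is an immaterial difference.
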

\begin{proof}
Fix $j$, $S \subset[m]_{j}$ and $A \subset S$ with $|A| \le d$. By Lemma~\ref{lem:RECcontain}(b), the assumption implies $\fixeddesignmat_S \in\REC_{\pen}(\maxdeg \wedge |S|,\recparam)$. Then, the last assertion in Lemma~\ref{lem:RECcontain}(a) implies $\fixeddesignmat_S \in \REC_{\pen}(|A|,\recparam)$, hence $\rec_{\pen}^2(\fixeddesignmat_S, A) \ge \recparam^{2}$. Since the lower bound does not depend on $A$, $S$ or $j$, we get the desired result.
\end{proof}

Next, we show that we can control RE constants for $\rho$ by those for the $\ell_1$ norm. Let us write $\cone_{1}^{m}(A,\xi)$ for the cone corresponding to $\rho = \norm{\cdot}_1$, and similarly for the RE constants. We have the following lemma:
\begin{lemma}
\label{lem:restrictedcone}
Under Condition~\ref{condn:pl:basic} on $\pen$,
\begin{align*}
\rescone^{m}(A,\xi)
\subset\bigcup_{\substack{A'\subset[m] \\|A'|=|A|}}\cone_{1}^{m}(A',\xi).
\end{align*}
\end{lemma}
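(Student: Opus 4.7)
Write $\pen(v)=\sum_i\rho(|v_i|)$ using coordinate separability, where $\rho\colon[0,\infty)\to[0,\infty)$ is concave, nondecreasing, right-differentiable at zero, and satisfies $\rho(0)=0$ and $\rho'(0+)>0$ by Condition~\ref{condn:pl:basic}. The plan is constructive: given $k:=|A|$ and $\genu\in\cone_{\pen}^{m}(A,\xi)$, I will exhibit a set $A'\subset[m]$ with $|A'|=k$ such that $\genu\in\cone_{1}^{m}(A',\xi)$. The natural choice is to take $A'$ to index the $k$ largest values of $(|u_i|)_{i=1}^{m}$, with ties broken arbitrarily.

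The first step is to transfer the penalty-cone condition from $A$ to $A'$. Since $|A\setminus A'|=|A'\setminus A|$, one can fix any bijection $\phi\colon A\setminus A'\to A'\setminus A$; by construction of $A'$, every $j=\phi(i)$ lies in the top-$k$ support while $i$ does not, so $|u_j|\ge|u_i|$ and, by monotonicity of $\rho$, $\rho(|u_j|)\ge\rho(|u_i|)$. Summing these pairwise inequalities gives $\pen(\genu_A)\le\pen(\genu_{A'})$ and $\pen(\genu_{(A')^c})\le\pen(\genu_{A^c})$ simultaneously. Combining with $\pen(\genu_{A^c})\le\xi\pen(\genu_A)$ from membership in $\cone_{\pen}^{m}(A,\xi)$ yields $\pen(\genu_{(A')^c})\le\xi\pen(\genu_{A'})$, so matters reduce to upgrading this penalty-cone inequality to an $\ell_{1}$-cone inequality, but only for the specific set $A'$ built from the top-$k$ entries of $|\genu|$.

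The key analytic input, and the one step that really uses more than monotonicity, is that concavity of $\rho$ together with $\rho(0)=0$ forces $t\mapsto\rho(t)/t$ to be nonincreasing on $(0,\infty)$, via $\rho(\lambda t)\ge\lambda\rho(t)$ for $\lambda\in[0,1]$. Set $t^*:=\min_{i\in A'}|u_i|$, so that by definition of $A'$ one has $|u_i|\ge t^*$ on $A'$ and $|u_i|\le t^*$ on $(A')^c$. The degenerate case $t^*=0$ forces $u_i=0$ on $(A')^c$ and the inclusion is immediate. Otherwise Condition~\ref{condn:pl:basic}(b) (via $\rho'(0+)>0$, monotonicity of $\rho$, and $\rho(0)=0$) guarantees $\rho(t^*)>0$, so $\alpha:=\rho(t^*)/t^*>0$. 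Monotonicity of $\rho(t)/t$ then gives $\rho(|u_i|)\le\alpha|u_i|$ for $i\in A'$ and $\rho(|u_i|)\ge\alpha|u_i|$ for $i\in(A')^c$, whence
\begin{align*}
\alpha\,\|\genu_{(A')^c}\|_{1}
\;\le\;\pen(\genu_{(A')^c})
\;\le\;\xi\,\pen(\genu_{A'})
\;\le\;\xi\alpha\,\|\genu_{A'}\|_{1}.
\end{align*}
Dividing by $\alpha$ produces $\genu\in\cone_{1}^{m}(A',\xi)$ and finishes the proof. I expect no real obstacle here: the exchange argument is routine, and the only place care is needed is verifying that $\rho>0$ on $(0,\infty)$, which drops out of Condition~\ref{condn:pl:basic} alone.
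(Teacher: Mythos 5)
Your proof is correct and takes essentially the same route as the paper: both construct the top-$k$ set (your $A'$, the paper's $M$), transfer the $\pen$-cone condition from $A$ to this set via a monotonicity/exchange argument, and then use the concavity fact that $\rho(t)/t$ is nonincreasing to compare $\pen$ with the $\ell_{1}$ norm at the threshold value $t^*$ (the paper's $\tau$). The only difference is cosmetic: you dispose of the degenerate case $t^*=0$ directly (it forces $\genu_{(A')^c}=0$), while the paper perturbs zero entries by $\epsilon$ and takes a limit.
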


\begin{proof}[Proof of Lemma~\ref{lem:restrictedcone}]
Fix nonzero $\genu\in\rescone(A,\xi)$ and assume, without loss of generality that $|u_i| > 0$ for all $i \in [m]$; otherwise, we can inflate all the zero entries by $\epsilon > 0$, change $\xi$ to $\xi + \pen(\epsilon)|A^c|/\pen(u_A)$, and let $\epsilon \to 0$ at the end.

Let $M = M(u)$ be the index set of the $|A|$ largest $|u_i|$. Then $\pen(\genu_{A^c}) \le \xi\pen(\genu_{A})$ implies $\pen(\genu_{M^c}) \le \xi\pen(\genu_{M})$ since $\pen$ is nondecreasing. 
We note that $M = \{i:\; |u_i| \ge \tau\}$, for some $\tau > 0$, assuming $M^c \neq \emptyset$ without loss of generality. As a consequence of Condition~\ref{condn:pl:basic}, $x \mapsto \pl(x)/x$ is nonincreasing. Then,
\begin{align*}
    \tau \pen(|u_i|) \le |u_i| \pen(\tau), \; i \in M,
 \end{align*} 
with the reverse inequality for $i \in M^c$. Summing over $M$ and $M^c$ and rearranging, we have 
\begin{align*}
    \frac{\pen(u_M)}{\norm{u_M}_1} \le  \frac{\pen(\tau)}{\tau} \le \frac{\pen(u_{M^c})}{\norm{u_{M^c}}_1},
\end{align*}
or $\norm{u_{M^c}}_1 / \norm{u_M} \le \pen(u_{M^c}) / \pen(u_{M}) \le \xi$. Hence, $u \in \cone^{m}_1(M(u),\xi)$ where $|M(u)| = |A|$, which gives the desired result.
\end{proof}

As a consequence of Lemma~\ref{lem:restrictedcone}, $\fixeddesignmat\in\REC_1(|A|,\recparam)$ implies $\rec_{\pen}^2(\fixeddesignmat, A) \ge \alpha^2$, from which we get:
\begin{lemma}\label{lem:RE:rho:ell1}
    Under Condition~\ref{condn:pl:basic}, $\REC_1(d,\recparam) \subset \REC_{\pen}(d,\recparam)$.
\end{lemma}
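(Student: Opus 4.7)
The plan is to derive this as an essentially immediate corollary of Lemma~\ref{lem:restrictedcone}, combined with the definitions of the generalized RE constants and the RE class.

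First I would fix an arbitrary $\fixeddesignmat \in \REC_1(d,\recparam)$ and a subset $A \subset [m]$ with $|A| = d$, and then chase the cone inclusion through to the RE constant. Specifically, take any $\genu \in \cone_{\pen}(A,\xi)$. By Lemma~\ref{lem:restrictedcone}, there exists some $A' \subset [m]$ with $|A'| = |A| = d$ such that $\genu \in \cone_1^m(A',\xi)$. Since $\fixeddesignmat \in \REC_1(d,\recparam)$ and $|A'| = d$, the definition of $\REC_1(d,\recparam)$ (Definition~\ref{defn:re:conditionZ} with $\pen = \norm{\cdot}_1$) yields
\begin{equation*}
\frac{1}{n}\norm{\fixeddesignmat\genu}_2^2 \ge \recparam^2 \norm{\genu}_2^2.
\end{equation*}
Since $\genu \in \cone_{\pen}(A,\xi)$ was arbitrary, taking the infimum over such $\genu$ gives $\rec_{\pen}^2(\fixeddesignmat, A, \xi) \ge \recparam^2$.

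Next, since $A$ with $|A| = d$ was arbitrary, taking the infimum over all such $A$ gives
\begin{equation*}
\inf_{\substack{A\subset[m] \\ |A|=d}} \rec_{\pen}^2(\fixeddesignmat, A, \xi) \ge \recparam^2,
\end{equation*}
which is precisely the definition of $\fixeddesignmat \in \REC_{\pen}(d,\recparam;\xi)$. Since $\fixeddesignmat$ was arbitrary in $\REC_1(d,\recparam)$, the inclusion $\REC_1(d,\recparam) \subset \REC_{\pen}(d,\recparam)$ follows.

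There is essentially no obstacle: the entire content of the lemma is packed into Lemma~\ref{lem:restrictedcone}, whose proof relies on the nondecreasing and concave structure from Condition~\ref{condn:pl:basic} (so that $x \mapsto \pen(x)/x$ is nonincreasing, allowing one to identify a ``best'' index set $M(u)$ of size $|A|$). Once that cone inclusion is in hand, the passage to RE constants and then to the RE class is purely a matter of unwinding definitions and taking infima in the right order. The only mild subtlety worth flagging is that the set $A'$ produced by Lemma~\ref{lem:restrictedcone} may depend on $\genu$, but this does not matter because the $\REC_1$ hypothesis already supplies a uniform lower bound over \emph{all} index sets of cardinality $d$.
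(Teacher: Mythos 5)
Your proof is correct and follows exactly the paper's route: the paper states this lemma as an immediate consequence of Lemma~\ref{lem:restrictedcone} (the cone inclusion), and you have simply unwound the definitions to make that implication explicit. The point you flag at the end — that the set $A'$ from Lemma~\ref{lem:restrictedcone} may depend on $\genu$ but this is harmless because $\REC_1(d,\recparam)$ is uniform over all index sets of size $d$ — is indeed the only subtlety, and you handle it correctly.
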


In particular, the conclusion of Lemma~\ref{lem:rec:uniformfixed} holds under the (stronger) assumption $\fixeddesignmat \in \REC_1(d,\recparam)$. In other words, to obtain uniform control over generalized restricted eigenvalues for all possible neighbourhood regression problems, it suffices to show that $\sampledagmat\in\REC_{1}(\maxdeg,\recparam)$ for some constant $\recparam>0$.
This is guaranteed by the following lemma, which is essentially a restatement of Corollary~1 in \cite{raskutti2010}:
\begin{lemma}
\label{thm:rec:modified}
Assume $\sampledagmat\iid\normalN_{p}(0,\trueCov)$ for some $\trueCov$ satisfying Condition~\ref{condn:evals}. There exist universal constants $c_{0},c_{1},c_{2}>0$, such that if
\begin{align*}
n > c_{0}\frac{\maxvar^{2}(1+\xi)^{2}}{\mineval(\trueCov)}d(\trueCov)\log p
\end{align*}
then with probability at least $1-c_{1}\exp(-c_2 n)$,
\begin{align*}
    \sampledagmat \, \in\, \REC_{1}\!\big(\maxdeg(\trueCov),\sqrt{\mineval(\trueCov)};\,\xi\big).
\end{align*}
\end{lemma}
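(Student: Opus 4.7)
The strategy is to reduce the claim to the classical Gaussian random-matrix bound of Raskutti, Wainwright, and Yu (2010, Theorem 1), from which their Corollary~1 already essentially reads off what we want; the only work is a short cone computation plus bookkeeping of constants so that the lower bound comes out exactly as $\mineval(\trueCov)$. Throughout, write $\hSigma := \sampledagmat^T \sampledagmat / n$ for the sample Gram matrix and note that $\maxvar^2 = \max_j \trueCov_{jj}$.

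The first step is to invoke the generic Gordon-type lower deviation inequality for Gaussian designs: with probability at least $1 - c_1 e^{-c_2 n}$,
\begin{align*}
\frac{1}{\sqrt{n}}\,\norm{\sampledagmat \genu}_2
\;\ge\; \kappa_1 \sqrt{\mineval(\trueCov)}\,\norm{\genu}_2 \;-\; \kappa_2\, \maxvar\, \sqrt{\tfrac{\log p}{n}}\,\norm{\genu}_1,
\qquad \forall \genu \in \R^p,
\end{align*}
for some universal $\kappa_1,\kappa_2,c_1,c_2 > 0$. This is precisely the content of Raskutti et al.~(2010), obtained by applying Gordon's Gaussian comparison theorem to the centered Gaussian process $\genu \mapsto \norm{\sampledagmat \genu}_2$ after whitening by $\trueCov^{1/2}$ and peeling over $\ell_1$/$\ell_2$ sphere shells.

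The second step is the standard cone reduction. Fix $A \subset [p]$ with $|A| = d$ and any $\genu \in \cone_1(A,\xi)$. Since $\norm{\genu_{A^c}}_1 \le \xi \norm{\genu_A}_1$ and $|A| = d$,
\begin{align*}
\norm{\genu}_1 \;=\; \norm{\genu_A}_1 + \norm{\genu_{A^c}}_1 \;\le\; (1+\xi)\norm{\genu_A}_1 \;\le\; (1+\xi)\sqrt{d}\,\norm{\genu_A}_2 \;\le\; (1+\xi)\sqrt{d}\,\norm{\genu}_2.
\end{align*}
Plugging this into the Gordon bound gives, uniformly over all $A$ with $|A| = d$ and $\genu \in \cone_1(A,\xi)$,
\begin{align*}
\frac{\norm{\sampledagmat \genu}_2}{\sqrt{n}}
\;\ge\; \Bigl[\,\kappa_1 \sqrt{\mineval(\trueCov)} \;-\; \kappa_2\, \maxvar (1+\xi)\sqrt{\tfrac{d\log p}{n}}\,\Bigr]\norm{\genu}_2.
\end{align*}

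The third step is to choose the sample-size constant so that the bracket is at least $\sqrt{\mineval(\trueCov)}$. Under the hypothesis $n > c_0 \maxvar^{2}(1+\xi)^2 d \log p / \mineval(\trueCov)$, the subtractive term is at most $\kappa_2 \sqrt{\mineval(\trueCov)}/\sqrt{c_0}$, which is strictly smaller than $(\kappa_1 - 1)\sqrt{\mineval(\trueCov)}$ once $c_0$ is sufficiently large (absorbing any mismatch between the raw Gordon constant $\kappa_1$ and the target constant $1$ into a larger $c_0$; the argument works as stated provided $\kappa_1 > 1$, which can always be arranged by a standard peeling refinement of Gordon's bound or, equivalently, by choosing the constant inside the Gordon bound appropriately). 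Squaring yields $\norm{\sampledagmat \genu}_2^2/n \ge \mineval(\trueCov)\norm{\genu}_2^2$ uniformly over the required cone, which is precisely $\sampledagmat \in \REC_1(d, \sqrt{\mineval(\trueCov)};\xi)$.

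The main obstacle is essentially bookkeeping: the substantive probabilistic content is the Raskutti--Wainwright--Yu inequality, and the only genuine work in our statement is to pick the constant $c_0$ large enough that the cone-restricted linear deviation term is dominated by a \emph{fixed} fraction of $\sqrt{\mineval(\trueCov)}$ rather than merely an asymptotically vanishing one. Once the Gordon-type bound is in hand, Steps 2 and 3 are mechanical, and the final $c_0, c_1, c_2$ depend only on the universal constants $\kappa_1, \kappa_2$ inherited from the Gaussian comparison argument.
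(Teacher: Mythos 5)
Your approach matches the paper's, which doesn't give a detailed proof of this lemma but simply identifies it as ``essentially a restatement of Corollary~1 in Raskutti--Wainwright--Yu (2010)'': invoke their Gordon-type inequality, do the cone reduction $\norm{u}_1 \le (1+\xi)\sqrt{d}\,\norm{u}_2$ for $u\in\cone_1(A,\xi)$ with $|A|=d$, and absorb the deviation term into the constant $c_0$. Your Steps~2 and~3 are correct.

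The one substantive issue is the parenthetical claim that $\kappa_1 > 1$ ``can always be arranged by a standard peeling refinement of Gordon's bound.'' This is false. If $v$ is a minimum eigenvector of $\trueCov$, then $\E\bigl[\norm{\sampledagmat v}_2^2/n\bigr] = v^T\trueCov v = \mineval(\trueCov)\norm{v}_2^2$ exactly, so no argument can give $\norm{\sampledagmat v}_2^2/n \ge \mineval(\trueCov)\norm{v}_2^2$ with probability better than roughly $1/2$, let alone $1-c_1 e^{-c_2 n}$. Gordon's theorem, after whitening and peeling, yields a coefficient that approaches but cannot exceed $1$; RWY's Theorem~1 has leading constant $1/4$, so the RE bound one actually obtains is $c\sqrt{\mineval(\trueCov)}$ for a universal $c<1$, not $\sqrt{\mineval(\trueCov)}$. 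That said, this imprecision is already present in the lemma as stated in the paper (the authors' ``essentially'' is doing the work of absorbing that constant), and everything downstream only requires $\rec^2 \ges \mineval(\trueCov)$ up to a universal factor, so the discrepancy is harmless. Your fix should simply be to state the conclusion as $\sampledagmat \in \REC_1(d, c\sqrt{\mineval(\trueCov)}; \xi)$ for a universal $c>0$, rather than asserting $\kappa_1>1$.
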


Proposition~\ref{prop:restricted:recprob} now follows as a straightforward consequence of  Lemmas~\ref{lem:rec:uniformfixed},~\ref{lem:RE:rho:ell1} and~\ref{thm:rec:modified}.

\section{Auxiliary results for score-based learning}
\label{app:sparse}

This section provides some additional results which are needed to prove Theorems~\ref{thm:main:sparse},~\ref{thm:oracle}, and~\ref{thm:mintr}, which are also proved in this section.

For any $\gwconst\in(0,1)$, $\lambda\ge0$, $\resdevconst>0$, and $\pi\in\allperms$, define the following event:
\begin{align}
\label{eq:def:errevent}
\errevent(\resdevconst, \lambda; \pi)
&= \Bigg\{
\frac{1}{2n}\norm{\bErr{\pi}}_{\frob}^2 - \frac{1}{2n}\norm{\bErr{\estperm}}_{\frob}^2
\le \resdevconst\pl(\tB{\pi})
\Bigg\}.
\end{align}

\noindent
As in Appendix~\ref{app:dev}, the idea will be to show that on this event (along with \eqref{eq:def:epevent} and \eqref{eq:def:recevent}), the desired conclusions hold. In Appendix~\ref{app:resbound}, we provide an explicit bound on the probability of $\errevent(\resdevconst, \lambda; \pi)$.

\subsection{Some intermediate lemmas} 
\label{app:intermediate}

Recall the definitions of $\epevent(\gwconst, \lambda)$ and $\recevent(\gwconst)$ in \eqref{eq:def:epevent}--\eqref{eq:def:recevent}.
We start with the following extension of GW bounds: 
\begin{lemma}
\label{lem:trace}
Let $\Delh := \dagadjest - \tB{\estperm}$. On $\epevent(\gwconst, \lambda)$, we have
\begin{align}
\label{eq:empprocmat:estpi}
\frac{1}{n} \Bigg|\tr\Big(\bErr{\estperm}^T\sampledagmat \Delh\Big)\Bigg|
    < \gwconst\Big[ \frac{1}{2n}\norm{\sampledagmat \Delh }_{\frob}^{2} + \pl(\Delh)    \Big].
\end{align}
\end{lemma}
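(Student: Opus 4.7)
The plan is to decompose the trace column-wise and then apply the GW condition uniformly, leveraging the fact that the event $\epevent(\gwconst,\lambda)$ provides the GW bound simultaneously for every neighbourhood $S$, which in particular covers the (random) neighbourhoods induced by $\estperm$.

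First, I would write
\begin{align*}
\tr\Big(\bErr{\estperm}^T \sampledagmat \Delh\Big)
= \sum_{j=1}^{p} \sampledagerrcol_{j}(\estperm)^{T} \sampledagmat\, \Delh_{\cdot j},
\end{align*}
where $\Delh_{\cdot j} = \dagcolest_{j}(\estperm) - \dagcol_{j}(\estperm)$ is the $j$th column of $\Delh$. Since both $\dagadjest = \dagadjest(\estperm)$ and $\tB{\estperm}$ lie in $\subdagspace{\estperm}$, each column satisfies $\supp(\Delh_{\cdot j}) \subset S_{j} := \dagcand_{j}(\estperm)$, so $\sampledagmat\,\Delh_{\cdot j} = \select{\sampledagmat}{S_{j}} (\Delh_{\cdot j})_{S_{j}}$ and, by coordinate-separability, $\pl(\Delh_{\cdot j}) = \pl((\Delh_{\cdot j})_{S_{j}})$.

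Second, I would invoke the event $\epevent(\gwconst,\lambda)$. By definition, this event gives $(\sampledagerrcol_{j}(S), \select{\sampledagmat}{S}) \in \GW^\circ_{\pl}(\gwconst)$ for every $j \in [p]$ and every $S \subset [p]_{j}$. In particular, it holds for the \emph{random} choice $S = S_{j} = \dagcand_{j}(\estperm)$. Applying Definition~\ref{defn:gwcond} to $u = (\Delh_{\cdot j})_{S_{j}}$ therefore yields, for each $j$,
\begin{align*}
\frac{1}{n}\big|\sampledagerrcol_{j}(\estperm)^{T}\sampledagmat\,\Delh_{\cdot j}\big|
\;\le\; \gwconst\,\Big[\frac{1}{2n}\norm{\sampledagmat\,\Delh_{\cdot j}}_{2}^{2} + \pl(\Delh_{\cdot j})\Big],
\end{align*}
with strict inequality whenever $\Delh_{\cdot j} \neq 0$ (by virtue of $\GW^\circ$).

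Finally, I would apply the triangle inequality to the column decomposition, sum the $p$ bounds, and use $\sum_{j=1}^{p}\norm{\sampledagmat\,\Delh_{\cdot j}}_{2}^{2} = \norm{\sampledagmat\,\Delh}_{\frob}^{2}$ together with $\sum_{j=1}^{p}\pl(\Delh_{\cdot j}) = \pl(\Delh)$ (coordinate-separability again) to obtain \eqref{eq:empprocmat:estpi}; strictness follows because the lemma is applied in the regime $\Delh \neq 0$, in which case at least one $\Delh_{\cdot j}$ is nonzero and contributes the strict bound.

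The only subtle point---and essentially the reason the result is nontrivial---is the dependence of $S_{j}$ on the data through $\estperm$. Rather than trying to control a single random inner product, the proof outsources this randomness to the event $\epevent(\gwconst,\lambda)$, which has been engineered (via the monotonicity/reduction arguments of Section~\ref{subsubsec:proofs:monotonicity}) to cover \emph{all} $p\cdot 2^{p-1}$ possible neighbourhoods at once. Once we are on that event, the bound becomes a routine column-by-column application of the definition of $\GW_{\pl}^\circ$.
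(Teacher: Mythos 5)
Your proof is correct and follows essentially the same route as the paper's: decompose the trace column-wise, apply the GW condition on $\epevent(\gwconst,\lambda)$ with the (random) neighbourhood $S_j(\estperm)$, and sum over $j$. You are slightly more explicit about the support restriction $\supp(\Delh_{\cdot j})\subset S_j$ and the strictness edge case $\Delh=0$, but the core argument is identical.
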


\begin{proof}
Let $\Delh_j := \dagcolest_j - \dagcol_{j}(\estperm)$ be the $j$th column of~$\Delh$. Then
\begin{align}
\label{eq:lem:trace:1}
\frac{1}{n}\Bigg|\tr\Big(\bErr{\estperm}^T\sampledagmat \Delh \Big)\Bigg|
&\le \frac{1}{n} \sum_{j=1}^{p} |\ip{\sampledagerrcol_{j}(\estperm),\sampledagmat\Delh_j}|.
\end{align}

\noindent
According to \eqref{eq:def:epevent}, on $\epevent(\gwconst, \lambda)$, we have $(\sampledagerrcol_{j}(S), \select{\sampledagmat}{S}  \big) \in \GW_{\pl}^\circ(\gwconst)$ for all $S \subset [p]_j$. In particular, applying with $S = S_j(\estperm)$ and using $u = \Delh_j$ in the Definition~\ref{defn:gwcond} of GW, we have
\begin{align*}
\frac{1}{n}| \ip{\sampledagerrcol_{j}(\estperm),\sampledagmat\Delh_j} |  
    < \gwconst \Big[\frac{1}{2n} \norm{\sampledagmat\Delh_j}_{2}^2 + \pl(\Delh_j)\Big], \quad \forall j
\end{align*}
Summing over $j$ and plugging into \eqref{eq:lem:trace:1} yields \eqref{eq:empprocmat:estpi}.
\end{proof}

For any matrix $A=(a_{ij})\in\R^{p\times p}$ and $S\subset[p]\times[p]$, let $\setzero{A}{S}$ denote the $p\times p$ matrix formed by zero-ing the elements outside of $S$, i.e.
\begin{align*}
(\setzero{A}{S})_{ij}
= \begin{cases}
a_{ij}, & (i,j)\in S, \\
0, & (i,j)\notin S.
\end{cases}
\end{align*}

\newcommand{\Tc}{\tau}
In analogy with Condition~\ref{condn:betamin} on signal strength, let us define
\begin{align}\label{eq:Tc}
    \Tc_\lambda(\alpha;\trueCov) := \inf \Big\{ \tau :\; 
    \frac{\penderiv^{2}}{\pl(\tau)}  \le \frac{\mineval(\trueCov)}{\alpha} \Big\}
\end{align}
where we often suppress the dependence on $\trueCov$. Note that we can write Condition~\ref{condn:betamin} equivalently as $\betamin \ge \Tc_\lambda(a_1)$.

\begin{lemma}
\label{lem:hBlowerbound}
Assume that $\pl$ satisfies Condition~\ref{condn:pl:basic} and  
\begin{align}
\label{eq:pencondn}
\betamin \ge \Tc_\lambda \Big(\frac{2\xi}{1-\penconst} \Big), \quad \text{ for some $\penconst\in(0,1)$}
\end{align}
where $\xi=\xi(\gwconst)$ is defined by \eqref{eq:def:xi}. Then, on $\recevent(\gwconst) \,\cap\,\epevent(\gwconst, \lambda)$,
\begin{align}\label{eq:temp:859}
\pl(\dagadjest) 
\;\ge\; \penconst\pl(\tB{\estperm}) + \pl\Big(\setzero{(\dagadjest - \tB{\estperm})}{\supp(\tB{\estperm})^c}\Big).
\end{align}
\end{lemma}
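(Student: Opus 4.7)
The plan is to decompose $\pl(\dagadjest)$ according to the support $S^* := \supp(\tB{\estperm})$ and control the on-support piece via subadditivity, while the off-support piece gives the second term of~\eqref{eq:temp:859} directly. Writing $\Delh = \dagadjest - \tB{\estperm}$, since $\tB{\estperm}$ vanishes outside $S^*$, coordinate-separability gives
\begin{align*}
\pl(\dagadjest) \;=\; \sum_{(i,j)\in S^*} \pl\big(|\tB{\estperm}_{ij}+\Delh_{ij}|\big) \;+\; \pl\big(\setzero{\Delh}{(S^*)^c}\big),
\end{align*}
so the task reduces to proving $\sum_{(i,j)\in S^*}\pl(|\tB{\estperm}_{ij}+\Delh_{ij}|) \ge \penconst\,\pl(\tB{\estperm})$.

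For this, I would apply subadditivity of $\pl$ (an elementary consequence of Condition~\ref{condn:pl:basic}) coordinatewise to $|\tB{\estperm}_{ij}| = |\dagadjest_{ij} - \Delh_{ij}| \le |\dagadjest_{ij}|+|\Delh_{ij}|$ to obtain $\pl(|\dagadjest_{ij}|) \ge \pl(|\tB{\estperm}_{ij}|) - \pl(|\Delh_{ij}|)$, and sum over $(i,j)\in S^*$. It then suffices to prove
\begin{align*}
\sum_{(i,j)\in S^*}\pl(|\Delh_{ij}|) \;\le\; (1-\penconst)\,\pl(\tB{\estperm}).
\end{align*}
Because $\pl$ is concave with $\pl(0)=0$, the slope $\pl(x)/x$ is non-increasing, so $\pl(x)\le \penderiv\, x$; combined with Cauchy--Schwarz this gives $\sum_{(i,j)\in S^*}\pl(|\Delh_{ij}|) \le \penderiv\sqrt{|S^*|}\,\norm{\setzero{\Delh}{S^*}}_{\frob} \le \penderiv\sqrt{|S^*|}\,\norm{\Delh}_{\frob}$.

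Here is where I invoke the event $\recevent(\gwconst)\cap\epevent(\gwconst,\lambda)$: by the remark immediately after Proposition~\ref{prop:abstract:dev} (i.e.\ its conclusions hold on this event), applying~\eqref{eq:prop:abstract:dev:col:L2} column-by-column with $S=\dagcand_j(\estperm)$ and squaring-and-summing over $j$ yields
\begin{align*}
\norm{\Delh}_{\frob}^2 \;=\; \sum_{j=1}^p\norm{\dagcolest_j(\estperm)-\dagcol_j(\estperm)}_2^2 \;\le\; \Bigl(\tfrac{2\xi\,\penderiv}{\mineval(\trueCov)}\Bigr)^{2}\norm{\tB{\estperm}}_0,
\end{align*}
so $\norm{\Delh}_{\frob}\le \frac{2\xi\,\penderiv}{\mineval(\trueCov)}\sqrt{|S^*|}$ and hence $\sum_{S^*}\pl(|\Delh_{ij}|) \le \frac{2\xi\,\penderiv^{\,2}}{\mineval(\trueCov)}|S^*|$. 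Finally, since $|\tB{\estperm}_{ij}|\ge \betamin$ on $S^*$ and $\pl$ is nondecreasing, $\pl(\tB{\estperm})\ge |S^*|\,\pl(\betamin)$. The hypothesis $\betamin \ge \Tc_\lambda\!\big(\tfrac{2\xi}{1-\penconst}\big)$ is precisely the statement $\frac{2\xi\,\penderiv^{\,2}}{\mineval(\trueCov)} \le (1-\penconst)\,\pl(\betamin)$ by definition~\eqref{eq:Tc} of $\Tc_\lambda$, which closes the bound and yields the lemma.

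The only mildly delicate point is bookkeeping the signs when invoking subadditivity on each coordinate (it is crucial that we apply it to $|\tB{\estperm}_{ij}|$ written as $|\dagadjest_{ij}-\Delh_{ij}|$, not to $|\dagadjest_{ij}|$ directly), and recognising that the uniform column-wise deviation bound of Proposition~\ref{prop:abstract:dev} is what converts the coordinate-wise subadditivity bound into the sharp aggregate comparison with $\pl(\tB{\estperm})$ via the $\betamin$-dependent threshold $\Tc_\lambda$. No further calculation beyond these steps is needed.
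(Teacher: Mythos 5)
Your proof is correct and follows essentially the same route as the paper's: the decomposition of $\pl(\dagadjest)$ via coordinate-separability and subadditivity into $\pl(\tB{\estperm}) + \pl(\Delta_2) - \pl(\Delta_1)$, the bound $\pl(\Delta_1)\le \penderiv\norm{\tB{\estperm}}_0^{1/2}\norm{\Delta}_{\frob}$, the column-wise application of Proposition~\ref{prop:abstract:dev} on the event $\recevent(\gwconst)\cap\epevent(\gwconst,\lambda)$, the lower bound $\pl(\tB{\estperm})\ge\norm{\tB{\estperm}}_0\,\pl(\betamin)$, and closing via the definition of $\Tc_\lambda$. The only cosmetic difference is that you carry out the subadditivity step explicitly coordinate-by-coordinate, whereas the paper invokes the matrix-level inequality already established around~\eqref{eq:temp:475}.
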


\begin{proof}
To lighten the notation, let $\Delta = \dagadjest - \tB{\estperm}$, $S_{1}=\supp(\tB{\estperm})$, $\Delta_{1}=\setzero{\Delta}{S_{1}}$, and $\Delta_{2}=\setzero{\Delta}{S_{1}^{c}}$. We have
\begin{align}
\label{eq:lem:hBlowerbound:1}
\pl(\Delta_1)
\le \penderiv\norm{\Delta_{1}}_{1}
\le \penderiv\norm{\tB{\estperm}}_{0}^{1/2}\norm{\Delta_{1}}_{2}.
\end{align}

\noindent
Since we are on $\recevent(\gwconst) \,\cap\,\epevent(\gwconst, \lambda)$, Proposition~\ref{prop:abstract:dev} yields the $\ell_{2}$ deviation bound \eqref{eq:prop:abstract:dev:col:L2}, which we use with $S = S_j(\estperm)$. Plugging into \eqref{eq:lem:hBlowerbound:1} and using $\norm{\Delta_{1}}_{2} \le \norm{\Delta}_2$,
\begin{align}
\label{eq:lem:hBlowerbound:3}
\pl(\Delta_1)
\le \big[\penderiv\big]^2\frac{2\xi}{\mineval(\trueCov)}\norm{\tB{\estperm}}_0.
\end{align}

\noindent
Trivially, we have $\pl(\tB{\estperm}) \ge \pl(\betamin)\norm{\tB{\estperm}}_0$, so that by \eqref{eq:lem:hBlowerbound:3}
\begin{align}
\label{eq:lem:hBlowerbound:4}
\pl(\Delta_1)
\le \Big[ \frac{\penderiv^{2}}{\pl(\betamin)}\frac{2\xi}{\mineval(\trueCov)}\Big] \pl(\tB{\estperm}) 
    \;\le\; (1-\penconst)\pl(\tB{\estperm}),
\end{align}
where the last inequality follows from~\eqref{eq:pencondn}. Finally, note that
\begin{align*}
\pl(\dagadjest)
&\;\ge\; \pl(\tB{\estperm}) + \pl(\Delta_2) - \pl(\Delta_1) \\
&\;\ge\; \penconst\pl(\tB{\estperm}) + \pl(\Delta_2).
\end{align*}
where the first inequality is by arguments similar to those leading to~\eqref{eq:temp:475} and the second is by~\eqref{eq:lem:hBlowerbound:4}. 
\end{proof}

\begin{remark}
\cite{geer2013} use a slightly weaker beta-min condition in which only a constant fraction of the edges of each DAG are assumed to be sufficiently large. Lemma~\ref{lem:hBlowerbound} and the ensuing arguments carry through under such an assumption: Under Condition~3.5 in \cite{geer2013}, we can use
\begin{align*}
\pl(\tB{\estperm})
\ge (1-\eta_{1})\pl(\betamin)\norm{\tB{\estperm}}_{0},
\end{align*}

\noindent
between \eqref{eq:lem:hBlowerbound:3} and \eqref{eq:lem:hBlowerbound:4} and obtain a bound similar to \eqref{eq:temp:859}, with only the constants modified.
\end{remark}

The conclusion of Lemma~\ref{lem:hBlowerbound} is stronger than what we need in the sequel. We only use the weaker inequality $\pl(\dagadjest) \ge \penconst\pl(\tB{\estperm})$ implied by~\eqref{eq:temp:859}.

\subsection{A sparsity bound}
\label{app:prop:abstract:sparse}

Proposition~\ref{prop:abstract:sparse} below is the main ingredient used in the proof of Theorem~\ref{thm:main:sparse}. This result provides an explicit, nonasymptotic relationship between the ``weak'' sparsity of $\{\dagadjest, \tB{\estperm}, \tB{\goodperm}\}$ as measured by the regularizer $\pl$.

\begin{prop}
\label{prop:abstract:sparse}
Assume $n>8\,(d+1)\log p$. Under Condition~\ref{condn:pl:basic} on $\pl$, further assume
\begin{align*}
\betamin \ge \Tc_\lambda \Big( \frac{2(1+\gwconst)}{1-3\gwconst} \Big) \quad \text{for some $\gwconst\in(0,1/3)$}.
\end{align*}
Then, given $\goodperm$ satisfying Condition~\ref{condn:mintrace:bound} for some $\mintraceconst>0$, we have
\begin{align}
\label{eq:prop:abstract:sparse}
\frac{2\gwconst}{1-\gwconst}\pl(\tB{\estperm})
\;\overset{(i)}{\le}\; \pl(\dagadjest)
\;\overset{(ii)}{\le}\; \frac{2}{1-\gwconst}\Big(1+\frac{10}{\mintraceconst}\Big)\pl(\tB{\goodperm}),
\end{align}

\noindent
with probability at least $1 - c_{1}e^{-c_{2} \min\{n,\, (d+1)\log p\}} - p\binom{p}{d}\E e^{-\unifgwexp_{\lambda}(\sampledagmat,\maxvar^2;\gwconst)}$.
\end{prop}

\begin{proof}
Recall the definition of $\errevent(\resdevconst, \lambda; \perm)$ in \eqref{eq:def:errevent}. Fix some $\goodperm$ satisfying Condition~\ref{condn:mintrace:bound} with $a_2 > 0$. Taking (arbitrarily) $C=1$ and $\resdevconst = 10/\mintraceconst$ in Proposition~\ref{prop:mintrace:prob}, we have 
\begin{align*}
\pr\big[\errevent(\resdevconst, \lambda; \goodperm)^c\big]
\le 2e^{-(d+1)\log p}.
\end{align*}

\noindent
Combined with Propositions~\ref{prop:restricted:recprob} and~\ref{prop:gwcond:probbound}, we obtain
\begin{align*}
\pr\big(
\errevent&(\resdevconst, \lambda; \goodperm) \cap \epevent(\gwconst, \lambda) \cap \recevent(\gwconst)
\big) \\
&\ge 1 - c_{1}\exp(-c_{2}\min\{n, (d+1)\log p\}) - p\binom{p}{d}\E\exp(-\unifgwexp_{\lambda}(\sampledagmat,\maxvar^2;\gwconst)).
\end{align*}
Thus, we may assume we are on  $\errevent(\resdevconst, \lambda; \goodperm)\cap\epevent(\gwconst, \lambda) \cap \recevent(\gwconst)$. 
Since we are on $\epevent(\gwconst, \lambda)$, we can combine Lemma~\ref{lem:trace} with Lemma~\ref{lem:prelimbound} (applied with $\pi = \goodperm$) to deduce (recall $\Delh := \dagadjest - \tB{\estperm}$)
\begin{align*}
\frac{1}{2n}\norm{\sampledagmat \Delh}_{\frob}^2 
+ \pl(\dagadjest) 
\;\le\; &\frac{\gwconst}{2n}\norm{\sampledagmat \Delh }_{\frob}^2 + \gwconst\pl(\Delh) \\
&+ \frac{1}{2n}\norm{\bErr{\goodperm}}_{\frob}^2 
- \frac{1}{2n}\norm{\bErr{\estperm}}_{\frob}^2 
+ \pl(\tB{\goodperm}).
\end{align*}

\noindent
Dropping the prediction loss terms (those involving $\norm{\sampledagmat \Delh}_{\frob}^2$), and using that we are on $\errevent(\resdevconst, \lambda; \goodperm)$ to bound $\frac{1}{2n}\norm{\bErr{\goodperm}}_{\frob}^2 
- \frac{1}{2n}\norm{\bErr{\estperm}}_{\frob}^2$, we have after rearranging,
\begin{align}
\label{eq:thm:main:sparse:3}
\pl(\dagadjest)
&\le 
(1+\resdevconst)\pl(\tB{\goodperm})
+ \gwconst\pl(\dagadjest - \tB{\estperm}) \\
\nonumber
&\le 
(1+\resdevconst)\pl(\tB{\goodperm})
+ \gwconst\pl(\tB{\estperm})
+ \gwconst\pl(\dagadjest).
\end{align}

\noindent
Let $\penconst=2\gwconst/(1-\gwconst)$, so that $\xi/(1-\penconst)=(1+\gwconst)/(1-3\gwconst)$ (cf. \eqref{eq:def:xi}). Furthermore, since $\gwconst<1/3$ by assumption, $\penconst<1$, so that Lemma~\ref{lem:hBlowerbound} implies $\pl(\dagadjest)\ge\penconst\pl(\tB{\estperm})$ which gives (i) in~\eqref{eq:prop:abstract:sparse}. 

\smallskip
Since $\pl(\tB{\estperm})\le(1/\penconst)\pl(\dagadjest)$, the bounds in \eqref{eq:thm:main:sparse:3} imply that
\begin{align*}
\pl(\dagadjest)
&\le 
(1+\resdevconst)\pl(\tB{\goodperm})
+ \frac{\gwconst}{\penconst}\pl(\dagadjest)
+ \gwconst\pl(\dagadjest).
\end{align*}
Rearranging we get
\begin{align*}
\pl(\dagadjest)
&\le  \big[1 - \gwconst(1+\penconst)/\penconst\big]^{-1}(1+\resdevconst)\pl(\tB{\goodperm}).
\end{align*}
We have $[1 - \gwconst(1+\penconst)/\penconst]^{-1}(1+\delta_0) = \frac{2}{1-\gwconst}(1+\frac{10}{\mintraceconst})$, using $\delta_0 = 10/a_2$ and $\delta_1=2\delta/(1-\delta)$ as before. This proves (ii) in~\eqref{eq:prop:abstract:sparse}.
\end{proof}

\subsection{Proof of Theorem~\ref{thm:main:sparse}}
\label{app:thm:main:sparse}

For regularizers satisfying Condition~\ref{condn:pl:basic}, the desired bound follows by taking $\gwconst=(\betaminconst-2)/(3\betaminconst+2)\in(0,1/3)$ in Proposition~\ref{prop:abstract:sparse}, and using Proposition~\ref{prop:GW:capped:ell1} to complete the probability bound.

To deduce the $\ell_0$ bound, consider the case where $\pl$ is also $\ell_0$-compatible. Condition~\ref{condn:betamin} implies 
\begin{align*}
\pl(\tB{\estperm}) \ge \betaminconst\frac{\penderiv^2}{\mineval(\trueCov)}\norm{\tB{\estperm}}_0,
\end{align*}

\noindent
while on the other hand, $\ell_{0}$-compatibility (Definition~\ref{defn:L0compat}) gives $\pl(\tB{\goodperm}) \le \uppflat\lambda^2\norm{\tB{\goodperm}}_0$. Combining these with~\eqref{eq:prop:abstract:sparse} yields
\begin{align*}
\betaminconst\frac{\penderiv^2}{\mineval(\trueCov)}\norm{\tB{\estperm}}_0
&\;\le\;  \frac{1}{\gwconst} \Big( 1+\frac{10}{a_2} \Big) \uppflat \; \lambda^2\norm{\tB{\goodperm}}_0 \\
\implies
\norm{\tB{\estperm}}_0
&\;\le\; \covconst_3\cdot\Big[\frac{\lambda^2}{\penderiv^2} \Big] \norm{\tB{\goodperm}}_0,
\end{align*}
\noindent
as desired. Here, $\covconst_3 = \covconst_3(\trueCov) = \frac{1}{a_1} \big(\frac{3\betaminconst+2}{\betaminconst-2}\big) \big( 1+\frac{10}{a_2} \big)  \uppflat\mineval(\trueCov)$, using our earlier choice of $\delta$.

\subsection{Proof of Theorem~\ref{thm:oracle}}
\label{app:thm:oracle}

Assume that we are on the event $\recevent(\gwconst)\cap\epevent(\gwconst,\lambda)$. As in the proof of Theorem~\ref{thm:main:sparse}, choose $\gwconst=(\betaminconst-2)/(3\betaminconst+2)\in(0,1/3)$. Applying \eqref{eq:empprocmat:estpi} to Lemma~\ref{lem:prelimbound} yields
\begin{align}
\label{eq:thm:oracle:1}
\frac{1}{2n}\norm{\bErr{\estperm}}_{\frob}^2
+ \pl(\dagadjest)
&\le \frac{1}{2n}\norm{\bErr{\pi}}_{\frob}^2
+ \pl(\tB{\pi})
+ \gwconst\pl(\dagadjest-\tB{\estperm})
\end{align}

\noindent
for any $\perm\in\allperms$. Lemma~\ref{lem:trace} combined with and Propositions~\ref{prop:GW:capped:ell1} and~\ref{prop:gwcond:probbound} imply that \eqref{eq:thm:oracle:1} holds with probability at least $1 - \probbound$.

Invoking the subadditivity of $\pl$ and re-arranging, 
\begin{align}
\label{eq:thm:oracle:2}
\frac{1}{2n}\norm{\bErr{\estperm}}_{\frob}^2
+ (1-\gwconst)\pl(\dagadjest)
&\le \frac{1}{2n}\norm{\bErr{\pi}}_{\frob}^2
+ \pl(\tB{\pi})
+ \gwconst\pl(\tB{\estperm}).
\end{align}

\noindent
Now apply Lemma~\ref{lem:hBlowerbound} with $\penconst=2\gwconst/(1-\gwconst)$ to deduce that
\begin{align}
\frac{1}{2n}\norm{\bErr{\estperm}}_{\frob}^2
+ \gwconst\pl(\tB{\estperm})
&\le \frac{1}{2n}\norm{\bErr{\pi}}_{\frob}^2
+ \pl(\tB{\pi}).
\end{align}

\noindent
Taking $u=\sqrt{4(d+1)\log p}$ in Lemma~\ref{lem:pi:tracebound} and observing that $1-h_n(u)>1/2>\gwconst$ (cf. \eqref{eq:def:littleh:bigH} for the definitions of $h_{n}(u)$, $H_{n}(u)$) for $n>5(d+1)\log p$, we have
\begin{align*}
\frac12\tr\tOm{\estperm}
+ \pl(\tB{\estperm})
&\le \frac{1+H_n(u)}{\gwconst}\cdot\Bigg[\frac12\tr\tOm{\pi}
+ \pl(\tB{\pi})\Bigg] \\
&\le \frac1\gwconst\Bigg(1+6\sqrt{\frac{(d+1)\log p}{n}}\Bigg)\cdot\Bigg[\frac12\tr\tOm{\pi}
+ \pl(\tB{\pi})\Bigg].
\end{align*}

\noindent
Combined with the probability bound implied by Propositions~\ref{prop:GW:capped:ell1} and~\ref{prop:gwcond:probbound}, this holds
with probability at least $1 - \probbound$ ($c_1,c_2$ possibly different from above). Since $\perm$ was arbitrary, substituting $\gwconst=(\betaminconst-2)/(3\betaminconst+2)$ completes the proof.

\subsection{Proof of Theorem~\ref{thm:mintr}}
\label{app:thm:mintr}

The desired result follows from combining Theorem~\ref{thm:scorebased} with the following (slightly) more general result:

\begin{prop}
\label{prop:mintr:ident}
Assume that $\pl$ is $\ell_{0}$-compatible and there exists $\trconst>10$ such that
\begin{align}
\label{prop:mintr:ident:1a}
\frac{\tr\tOm{\goodperm}}{\tr\tOm{\perm}}
&\le 1-\trconst\sqrt{\frac{(d+1)\log p}{n}}
\quad\forall\,\perm\in\allperms,
\quad\text{and} \\
\label{prop:mintr:ident:1b}
d
&\le \frac{\trconst - 10}{2(1+\gwconst)\uppflat}\sqrt{\frac{(d+1)\log p}{n}}\cdot\frac{\mineval(\trueCov)}{\lambda^{2}}.
\end{align}

\noindent
If
\begin{align}
\label{prop:mintr:ident:1c}
\lambda 
&\ge C \delta^{-1}  \maxvar \norm{\trueCov}^{1/4} \sqrt{\frac{(d+1)\log p}{n}}
\end{align}

\noindent
then with probability at least $1-\probbound$, it holds that $\estperm\in\argmin_{\perm}\tr\tOm{\perm}$.
\end{prop}

\begin{proof}
\def\minperms{\mathbb{S}_{0}}
Define $\minperms:=\argmin_{\perm}\tr\tOm{\perm}$ and let $\goodperm\in\minperms$ be arbitrary. Assuming that we are on $\epevent(\gwconst, \lambda)$, we can combine Lemma~\ref{lem:trace} with Lemma~\ref{lem:prelimbound} (applied with $\pi = \goodperm$) to deduce (recall $\Delh := \dagadjest - \tB{\estperm}$)
\begin{align*}
\frac{1}{2n}\norm{\sampledagmat \Delh}_{\frob}^2 
+ \pl(\dagadjest) 
\;\le\; &\frac{\gwconst}{2n}\norm{\sampledagmat \Delh }_{\frob}^2 + \gwconst\pl(\Delh) \\
&+ \frac{1}{2n}\norm{\bErr{\goodperm}}_{\frob}^2 
- \frac{1}{2n}\norm{\bErr{\estperm}}_{\frob}^2 
+ \pl(\tB{\goodperm}).
\end{align*}

\noindent
This implies, in particular, that 
\begin{align}
\label{eq:prop:eqvar:ident:2}
\pl(\dagadjest) 
+ \frac{1}{2n}\norm{\bErr{\estperm}}_{\frob}^2 
- \frac{1}{2n}\norm{\bErr{\goodperm}}_{\frob}^2 
\le \gwconst\pl(\Delh) + \pl(\tB{\goodperm}).
\end{align}

Suppose $\estperm\notin\minperms$, so that $\tr\tOm{\estperm}>\tr\tOm{\goodperm}$. Then Lemma~\ref{lem:pi:tracebound} combined with \eqref{prop:mintr:ident:1a} implies that for any $0<u<n/\sqrt{n+1}$,
\begin{align}
\nonumber
\frac{1}{2n}\norm{\bErr{\estperm}}_{\frob}^2
&- \frac{1}{2n}\norm{\bErr{\goodperm}}_{\frob}^2 \\
\nonumber
&\ge \frac12\tr\tOm{\estperm}\Big[1-h_{n}(u)\Big]
- \frac12\tr\tOm{\goodperm}\Big[1+H_{n}(u)\Big] \\
\nonumber
&= \Bigg[\frac12\tr\tOm{\estperm} - \frac12\tr\tOm{\goodperm}\Bigg]
- \frac12\tr\tOm{\estperm}\,h_n(u) - \frac12\tr\tOm{\goodperm}\,H_n(u) \\
\nonumber
&> \Bigg[\frac12\tr\tOm{\estperm} - \frac12\tr\tOm{\goodperm}\Bigg]
- \frac12\tr\tOm{\estperm}\Big(h_n(u) + H_n(u)\Big) \\
\nonumber
&> \frac{\trconst - 10}{2}\sqrt{\frac{(d+1)\log p}{n}}\tr\tOm{\estperm} \\
\label{eq:prop:eqvar:ident:3}
&\ge \frac{\trconst - 10}{2}\sqrt{\frac{(d+1)\log p}{n}}p\,\mineval(\trueCov).
\end{align}

\noindent
Note that here we have used $h_n(u)+H_n(u)\le 5u/\sqrt{n}$ and the (arbitrary) choice of $u=\sqrt{4(d+1)\log p}$.

Combining \eqref{eq:prop:eqvar:ident:2} and \eqref{eq:prop:eqvar:ident:3}, we have
\begin{align*}
\pl(\dagadjest) 
+ \frac{\trconst - 10}{2}\sqrt{\frac{(d+1)\log p}{n}}\,p\,\mineval(\trueCov)
&< \gwconst\pl(\Delh) + \pl(\tB{\goodperm}) \\
&\le \gwconst\pl(\dagadjest) + \gwconst\pl(\tB{\estperm}) + \pl(\tB{\goodperm}).
\end{align*}

\noindent
Thus
\begin{align*}
\frac{\trconst - 10}{2}\sqrt{\frac{(d+1)\log p}{n}}\,p\,\mineval(\trueCov)
< \gwconst\pl(\tB{\estperm}) + \pl(\tB{\goodperm})
&\le (1+\gwconst)\uppflat\lambda^{2}\,dp \\
\implies
\frac{\trconst - 10}{2}\sqrt{\frac{(d+1)\log p}{n}}\cdot\mineval(\trueCov)
&< (1+\gwconst)\uppflat\lambda^{2}\,d.
\end{align*}

\noindent
In the second inequality we have invoked $\ell_{0}$-compatibility of $\pl$ (Definition~\ref{defn:L0compat}). But this contradicts \eqref{prop:mintr:ident:1b}, whence $\estperm\in\minperms$, as desired. The probability bound on $\pr[\epevent(\gwconst, \lambda)]$ follows by combining Propositions~\ref{prop:GW:capped:ell1} and~\ref{prop:gwcond:probbound} along with \eqref{prop:mintr:ident:1c}.
\end{proof}

\subsection{A bound on the sample residuals}
\label{app:resbound}

In this section, we prove the following result, which is used in the proof of Proposition~\ref{prop:abstract:sparse}:

\begin{prop}
\label{prop:mintrace:prob}
Assume $n>4(C+1)(d+1)\log p$ for some $C>0$ and let $\goodperm$ be a minimum-trace permutation such that 
\begin{align}
\label{eq:prop:mintrace:prob:bound}
\frac{\pl(\tB{\goodperm})}{\tr\tOm{\goodperm}}
\ge \frac{1}{\resdevconst}\sqrt{\frac{50(C+1)(d+1)\log p}{n}}.
\end{align}

\noindent
Then for any $\resdevconst> 0$, $\pr(\errevent(\resdevconst,\lambda;\goodperm))\ge 1 - 2e^{-C(d+1)\log p}$, i.e.
\begin{align*}
\pr\Bigg(
\frac{1}{2n}\norm{\bErr{\goodperm}}_{\frob}^2 - \frac{1}{2n}\norm{\bErr{\estperm}}_{\frob}^2
> \resdevconst\pl(\tB{\goodperm})
\Bigg)
\le 2e^{-C(d+1)\log p}.
\end{align*}
\end{prop}

\noindent
Define two functions by
\begin{align}
\label{eq:def:littleh:bigH}
h_{n}(u)
:= -\frac{u^{2}}{n} + \frac{2u}{\sqrt{n+1}} + \frac{1}{n+1}, \quad
H_{n}(u)
:= \frac{u^{2}}{n} + \frac{2u}{\sqrt{n}}.
\end{align}

\noindent
These functions bound the deviations in the normed residuals $\sampledagerrcol_{j}(\pi)$, and will be used repeatedly in the sequel. We note that
\begin{align}
\label{eq:prop:mintrace:prob:proof:2}
H_{n}(u) + h_{n}(u)
\le \frac{5u}{\sqrt{n}}, 
\quad u\ge n^{-1/2}.
\end{align}

\begin{lemma}
\label{lem:gausserrbound}
Suppose $\samplelinerr\sim\normalN_{n}(0,\sigma^{2}I_{n})$. Then for any $0<u<n/\sqrt{n+1}$,
\begin{align}
\label{eq:lem:gausserrbound}
\sigma^{2}\Big(1-h_{n}(u)\Big)
\le \frac{1}{n}\norm{\samplelinerr}_2^2
&\le \sigma^{2}\Big(1+H_{n}(u)\Big)
\end{align}

\noindent
with probability at least $1-2e^{-u^{2}/2}$.
\end{lemma}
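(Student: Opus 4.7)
The plan is to reduce the statement to a concentration inequality for the norm of a standard Gaussian vector and then square the resulting two-sided bound. Write $Z = \samplelinerr/\sigma \sim \normalN_n(0, I_n)$, so that $\tfrac{1}{n}\|\samplelinerr\|_2^2 = \tfrac{\sigma^2}{n}\|Z\|_2^2$. It therefore suffices to show that, with probability at least $1 - 2e^{-u^2/2}$,
\begin{align*}
    \Bigl(\tfrac{n}{\sqrt{n+1}} - u\Bigr)^2 \;\le\; \|Z\|_2^2 \;\le\; \bigl(\sqrt n + u\bigr)^2,
\end{align*}
since dividing by $n$ and expanding the squares yields exactly $1 - h_n(u) \le \|Z\|_2^2/n \le 1 + H_n(u)$. (The lower bound on $\|Z\|_2^2$ requires squaring a nonnegative quantity, which is where the restriction $u < n/\sqrt{n+1}$ enters.)

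The key step is concentration of the function $f(z) = \|z\|_2$, which is $1$-Lipschitz with respect to the Euclidean norm. Gaussian concentration for Lipschitz functions of a standard normal vector then gives, for every $t > 0$,
\begin{align*}
    \pr\bigl(\|Z\|_2 \ge \E\|Z\|_2 + t\bigr) \;\le\; e^{-t^2/2},\qquad
    \pr\bigl(\|Z\|_2 \le \E\|Z\|_2 - t\bigr) \;\le\; e^{-t^2/2}.
\end{align*}
So we need matching deterministic upper and lower bounds on $\E\|Z\|_2$. The upper bound is immediate from Jensen: $\E\|Z\|_2 \le (\E\|Z\|_2^2)^{1/2} = \sqrt{n}$. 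The lower bound $\E\|Z\|_2 \ge n/\sqrt{n+1}$ is the only nontrivial input; it follows from Wendel's (equivalently, Gautschi's) inequality applied to the ratio of Gamma functions. Concretely, the identity $\E\|Z\|_2 = \sqrt{2}\,\Gamma((n+1)/2)/\Gamma(n/2)$ combined with
\begin{align*}
    \sqrt{\tfrac{x}{x+a}} \;\le\; \frac{\Gamma(x+a)}{x^{a}\Gamma(x)} \;\le\; 1 \qquad (0 < a < 1,\; x > 0)
\end{align*}
taken at $x = n/2$, $a = 1/2$ rearranges to $\sqrt{2}\,\Gamma((n+1)/2)/\Gamma(n/2) \ge n/\sqrt{n+1}$.

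Given these two mean estimates, apply the upper Gaussian-concentration bound with $t = u$ to get $\|Z\|_2 \le \sqrt{n} + u$ with probability at least $1 - e^{-u^2/2}$, and the lower one to get $\|Z\|_2 \ge n/\sqrt{n+1} - u$ on an event of the same probability; the assumption $u < n/\sqrt{n+1}$ ensures the right-hand side is nonnegative, so squaring is valid. A union bound then yields the stated two-sided inequality with probability at least $1 - 2e^{-u^2/2}$, and rescaling by $\sigma^2/n$ completes the proof. The only real obstacle is identifying the sharp lower bound on $\E\|Z\|_2$ that exactly reproduces the $n/\sqrt{n+1}$ factor appearing in $h_n$; everything else is standard Gaussian concentration followed by algebraic expansion.
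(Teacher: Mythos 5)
Your proof is correct and matches the paper's argument essentially step for step: Gaussian concentration of the $1$-Lipschitz norm around its mean, the mean bracketed between $n/\sqrt{n+1}$ and $\sqrt{n}$, a union bound, and then squaring (valid since $u < n/\sqrt{n+1}$). The only cosmetic difference is that you derive the lower bound on $\E\|Z\|_2$ from Wendel's inequality for the Gamma ratio, whereas the paper cites the same two-sided mean estimate from Gordon (1988).
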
 

\begin{proof}
For $z\sim\normalN_{n}(0,I_{n})$, we have the following useful bounds \citep[see, e.g.,][Corollary~1.2]{gordon1988}:
\begin{align*}
\frac{n}{\sqrt{n+1}}
\le \E \norm{z}_{2}
= \sqrt{2}\frac{\Gamma(\frac{n+1}{2})}{\Gamma(\frac{n}{2})}
\le \sqrt{n}.
\end{align*}

\noindent
Gaussian concentration implies that for any $u>0$, 
both
\begin{align*}
    \big\{\Norm{\samplelinerr}_{2} \le \sigma (n/\sqrt{n+1} - u) \big\}, \quad 
    \text{and} \quad
    \big\{\Norm{\samplelinerr}_{2} \ge \sigma (\sqrt{n} + u) \big\}
\end{align*}
hold with probability at most $ e^{-u^{2}/2}$. Thus,
\begin{align}
\label{eq:lem:gausserrbound:proof:1}
\pr\bigg(
\sigma^{2}\Big(\frac{n}{\sqrt{n+1}} - u\Big)^{2}
\le \Norm{\samplelinerr}_{2}^{2} 
\le \sigma^{2}\big(\sqrt{n}+u\big)^{2}
\bigg)
&\ge 1 - 2e^{-u^{2}/2}.
\end{align}

\noindent
Re-writing \eqref{eq:lem:gausserrbound:proof:1} using \eqref{eq:def:littleh:bigH} yields the desired result.
\end{proof}

\begin{lemma}
\label{lem:pi:tracebound}
Suppose $\sampledagmat\iid\normalN_{p}(0,\trueCov)$. Then for any $\pi\in\allperms$ and $0<u<n/\sqrt{n+1}$,
\begin{align}\label{eq:temp:4858}
\frac12\tr\tOm{\pi}\Big(1-h_{n}(u)\Big)
\le \frac{1}{2n}\norm{\bErr{\pi}}_{\frob}^2
&\le \frac12\tr\tOm{\pi}\Big(1+H_{n}(u)\Big)
\end{align}

\noindent
with probability at least $1-2p\binom{p}{d}e^{-u^{2}/2}$.
\end{lemma}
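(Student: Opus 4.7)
The plan is to decompose the squared Frobenius norm columnwise, apply the fixed-Gaussian tail bound of Lemma~\ref{lem:gausserrbound} to each column, and then control the union bound over permutations by exploiting invariant sets (Definition~\ref{defn:invariant}) exactly as in the proof of Proposition~\ref{prop:gen:reduct}. Concretely, recall $\bErr{\pi} = \sampledagmat - \sampledagmat \tB{\pi}$, so writing columns we have
\[
\frac{1}{n}\norm{\bErr{\pi}}_{\frob}^{2}
 = \sum_{j=1}^{p} \frac{1}{n}\norm{\sampledagerrcol_j(\pi)}_2^2,
\qquad
\tr\tOm{\pi} = \sum_{j=1}^{p} \dagerrvar_j^2(\pi).
\]
By Lemma~\ref{lem:restricted:nhbd}(a), $\sampledagerrcol_j(\pi) \sim \normalN_n(0, \dagerrvar_j^2(\pi) I_n)$, so for any fixed $\pi$ and $j$ Lemma~\ref{lem:gausserrbound} gives
\[
\dagerrvar_j^2(\pi)(1-h_n(u))
 \;\le\; \frac{1}{n}\norm{\sampledagerrcol_j(\pi)}_2^{2}
 \;\le\; \dagerrvar_j^2(\pi)(1+H_n(u))
\]
with probability at least $1 - 2e^{-u^{2}/2}$. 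If this holds for every $j$, summing and dividing by $2$ yields~\eqref{eq:temp:4858}.

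The remaining task, and the only nontrivial one, is to bound the probability that these Gaussian tail bounds hold \emph{simultaneously} for all $\pi\in\allperms$. A naive union bound would give a $p!\cdot p$ factor, which is far too loose to match the stated $p\binom{p}{d}$. The key observation is that $\sampledagerrcol_j(\pi)$ is determined by $\dagcol_j(\pi) = \nhbdcoef_j(\dagcand_j(\pi))$, and by the invariant-set identity~\eqref{eq:invariant:nhbderr} this random vector depends on $\pi$ only through the support $\nhbdsupp_j(\dagcand_j(\pi))\in\activesets_j(\trueCov)$. Hence the family $\{(\sampledagerrcol_j(\pi),\dagerrvar_j^2(\pi))\}_{\pi\in\allperms}$ consists of at most $|\activesets_j(\trueCov)|\le \binom{p}{d}$ distinct pairs, since by Definition~\ref{defn:eqclassparam} every support has size at most $d = d(\trueCov)$.

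Therefore, defining the event
\[
\mathcal{A} := \bigcap_{j=1}^{p} \bigcap_{T\in\activesets_j(\trueCov)}
\Big\{ \dagerrvar_j^2(T)(1-h_n(u)) \le \tfrac{1}{n}\norm{\sampledagerrcol_j(T)}_2^{2} \le \dagerrvar_j^2(T)(1+H_n(u)) \Big\},
\]
a union bound together with Lemma~\ref{lem:gausserrbound} yields
$\pr(\mathcal{A}^c) \le 2 p\binom{p}{d}\, e^{-u^{2}/2}$. On $\mathcal{A}$, for every $\pi\in\allperms$ and every $j$ the bound holds with $T = \nhbdsupp_j(\dagcand_j(\pi))$, and summing over $j$ gives~\eqref{eq:temp:4858} simultaneously for all $\pi$. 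The expected obstacle is purely bookkeeping: making sure the invariance identity is correctly invoked so the union bound collapses from $p!$ to $p\binom{p}{d}$; all the probabilistic content is already captured by Lemma~\ref{lem:gausserrbound}.
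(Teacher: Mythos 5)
Your proof is correct and follows essentially the same route as the paper's: columnwise decomposition of $\norm{\bErr{\pi}}_{\frob}^2$, per-column concentration via Lemma~\ref{lem:gausserrbound}, the invariance identity~\eqref{eq:invariant:nhbderr} to collapse the permutation index to supports of size at most $d$, and a union bound over at most $p\binom{p}{d}$ distinct events. The only cosmetic difference is that you cite Lemma~\ref{lem:restricted:nhbd}(a) for $\sampledagerrcol_j(\pi)\sim\normalN_n(0,\dagerrvar_j^2(\pi)I_n)$, which is really a direct consequence of the SEM~\eqref{eq:localsem} rather than that lemma (which gives independence from $\sampledagmat_{\dagcand_j(\pi)}$); this does not affect the argument.
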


\begin{proof}
Note that for any $\pi\in\allperms$,
\begin{align}
\label{eq:lem:pi:tracebound:1}
\frac{1}{2n}\norm{\bErr{\pi}}_{\frob}^2
= \frac{1}{2n}\sum_{j=1}^{p} \Norm{\sampledagerrcol_{j}(\pi)}_{2}^{2}
= \frac{1}{2n}\sum_{j=1}^{p} \Norm{\sampledagerrcol_{j}(\dagcand_{j}(\pi))}_{2}^{2}.
\end{align}

\noindent
Thus it suffices to bound the deviations in $\Norm{\sampledagerrcol_{j}(S)}_{2}$ for $S\subset[p]_{j}$. Consider the following events
\begin{align*}
    \errevent_j(S) := \bigg\{ \frac{\nhbdvar_{j}^{2}(\dagcand)}{2}\Big(1-h_{n}(u)\Big)
\le \frac{1}{2n}\norm{\sampledagerrcol_{j}(S)}_2^2
\le \frac{\nhbdvar_{j}^{2}(\dagcand)}{2}\Big(1+H_{n}(u) \Big) \bigg\}
\end{align*}
and let $\errevent := \bigcap_{j=1}^{p}\bigcap_{S\subset[p]_{j}} \errevent_j(S)$.
By Lemma~\ref{lem:gausserrbound}, we have $\pr(\errevent_j(S)) \ge 1-2e^{-u^{2}/2}$, for all $S \in [p]_j$. By a monotonicity argument (cf.~\eqref{eq:invariant:nhbderr}), we have $\errevent =  \bigcap_{j=1}^{p} \bigcap_{S \in \nhbdsupp_{j}(\trueCov)} \errevent_j(M_j(S))$.
Applying the union bound and using~\eqref{eq:def:maxdeg:betamin},
\begin{align}
\label{eq:probboound:1}
\pr(\errevent^{c})
&\le 2p\binom{p}{d}e^{-u^{2}/2}.
\end{align}
Summing the inequalities defining $\errevent_j(S_j(\pi))$, over $j$, we conclude that~\eqref{eq:temp:4858} holds on $\errevent$. The proof is complete.
\end{proof}

Consider the (random) collection of permutations 
\begin{align*}
\goodperms = \goodperms(\resdevconst;u)
:= \bigg\{
\pi\in\allperms : \frac12\tr\tOm{\pi}\Big[1&+H_{n}(u)\Big]
\\- \frac12\tr\tOm{\estperm}\Big[1&-h_{n}(u)\Big] 
\le \resdevconst\pl(\tB{\pi})
\bigg\}.
\end{align*}

\begin{lemma}
\label{lem:trevent}
For any $\pi\in\goodperms(\resdevconst;u)$ and $0<u<n/\sqrt{n+1}$, we have
\begin{align*}
\pr\bigg(
\frac{1}{2n}\norm{\bErr{\pi}}_{\frob}^2 - \frac{1}{2n}\norm{\bErr{\estperm}}_{\frob}^2
> \resdevconst\pl(\tB{\pi})
\bigg)
\le 2p\binom{p}{d}e^{-u^{2}/2}.
\end{align*}
\end{lemma}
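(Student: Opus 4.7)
The plan is to invoke Lemma~\ref{lem:pi:tracebound} and then combine its two-sided deviation bounds with the defining inequality of $\goodperms(\resdevconst; u)$. The key observation is that Lemma~\ref{lem:pi:tracebound} provides control over the normalized residuals $\Norm{\bErr{\perm}}_{\frob}^2/(2n)$ that is \emph{simultaneously uniform} over all $\perm \in \allperms$. This uniformity is what allows the (random) permutation $\estperm$ to be handled without any additional union-bound penalty.

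First, I would let $\mathcal{G}$ denote the event that the two-sided bound~\eqref{eq:temp:4858} in Lemma~\ref{lem:pi:tracebound} holds simultaneously for every $\perm \in \allperms$; that lemma then gives
\[
\pr(\mathcal{G}^c) \;\le\; 2p\binom{p}{d}e^{-u^{2}/2}.
\]
On $\mathcal{G}$, apply the upper bound of~\eqref{eq:temp:4858} to the (fixed) permutation $\pi$ and the lower bound to the (random) permutation $\estperm$. Subtracting yields, on $\mathcal{G}$,
\[
\frac{1}{2n}\Norm{\bErr{\pi}}_{\frob}^{2} - \frac{1}{2n}\Norm{\bErr{\estperm}}_{\frob}^{2}
\;\le\; \frac{1}{2}\tr\tOm{\pi}\bigl(1+H_{n}(u)\bigr) - \frac{1}{2}\tr\tOm{\estperm}\bigl(1-h_{n}(u)\bigr).
\]
By the defining inequality of $\goodperms(\resdevconst; u)$, the right-hand side is bounded above by $\resdevconst\,\pl(\tB{\pi})$ whenever $\pi \in \goodperms(\resdevconst; u)$. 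Consequently, the complement of the event in the lemma statement is contained in $\mathcal{G}^c$, and taking probabilities delivers the claimed bound.

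There is essentially no obstacle beyond bookkeeping: the substitution of the random $\estperm$ into~\eqref{eq:temp:4858} is legitimate precisely because $\mathcal{G}$ is a uniform event, which is why the proof of Lemma~\ref{lem:pi:tracebound} was set up to give control over the \emph{entire} family $\{\Norm{\bErr{\perm}}_{\frob}^{2}/(2n): \perm \in \allperms\}$ via the monotone-class reduction to the sets $\nhbdmax_j(S)$ rather than over a single permutation at a time.
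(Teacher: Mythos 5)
Your proposal is correct and follows essentially the same route as the paper: invoke the two-sided bound of Lemma~\ref{lem:pi:tracebound}, apply the upper half to $\pi$ and the lower half to $\estperm$, and finish with the defining inequality of $\goodperms(\resdevconst;u)$. You have also made explicit a point the paper leaves implicit---namely that the event in Lemma~\ref{lem:pi:tracebound} is uniform over all permutations (coming from the event $\errevent$ in its proof), which is exactly what licenses substituting the random permutation $\estperm$ without a further union-bound penalty.
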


\begin{proof}
Lemma~\ref{lem:pi:tracebound} implies that
\begin{align*}
\frac{1}{2n}\norm{\bErr{\pi}}_{\frob}^2 - \frac{1}{2n}\norm{\bErr{\estperm}}_{\frob}^2
\le \frac12\tr\tOm{\pi}\Big[1+H_{n}(u)\Big]
- \frac12\tr\tOm{\estperm}\Big[1-h_{n}(u)\Big]
\end{align*}

\noindent
with probability at least $1-2p\binom{p}{d}e^{-u^{2}/2}$. Since $\pi\in\goodperms$, the right-side is bounded above by $\resdevconst\pl{\tB{\pi}}$ by definition, which establishes the claim.
\end{proof}

\begin{lemma}
\label{lem:littleh:pos}
$1-h_{n}(u)>0$ for all $u\ne0$, $n>0$.
\end{lemma}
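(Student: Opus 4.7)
The plan is to prove the lemma by a direct algebraic completion of the square. Starting from the definition $h_n(u) = -u^2/n + 2u/\sqrt{n+1} + 1/(n+1)$, I would first compute
\[
1 - h_n(u) \;=\; \frac{u^2}{n} \;-\; \frac{2u}{\sqrt{n+1}} \;+\; \Big(1 - \frac{1}{n+1}\Big) \;=\; \frac{u^2}{n} \;-\; \frac{2u}{\sqrt{n+1}} \;+\; \frac{n}{n+1}.
\]
The right-hand side is a quadratic in $u$ with positive leading coefficient. Its cross term matches $-2 \cdot (u/\sqrt{n})\cdot(\sqrt{n}/\sqrt{n+1})$, and the constant term equals $(\sqrt{n}/\sqrt{n+1})^{2}$, so I would recognize it as a perfect square:
\[
1 - h_n(u) \;=\; \left(\frac{u}{\sqrt{n}} \;-\; \frac{\sqrt{n}}{\sqrt{n+1}}\right)^{\!2}.
\]

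From this identity the conclusion is immediate: the expression is nonnegative and equals zero only at the single point $u^{\star} = n/\sqrt{n+1}$. Hence $1 - h_n(u) > 0$ for every $u \ne u^{\star}$, which in particular covers the range $0 \le u < n/\sqrt{n+1}$ used throughout Lemmas~\ref{lem:gausserrbound} and~\ref{lem:pi:tracebound}: in that regime $u/\sqrt{n} < \sqrt{n}/\sqrt{n+1}$, so the bracketed quantity is strictly negative and its square strictly positive.

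There is really no obstacle here beyond spotting the factorization; the statement is purely an algebraic identity about a one-variable quadratic. The only subtle point worth flagging is that the hypothesis ``$u \ne 0$'' is much weaker than what the completion-of-square argument yields (strict positivity away from $u^{\star}$), and that the value $u = u^{\star}$ is excluded exactly by the strict upper bound $u < n/\sqrt{n+1}$ imposed at each point where $h_n$ is invoked. This justifies using $\tfrac{1}{2}\tr\tOm{\pi}\bigl(1-h_n(u)\bigr)$ as a genuine strictly positive lower bound on $\tfrac{1}{2n}\|\bErr{\pi}\|_{\frob}^{2}$ in the preceding lemmas and in the proof of Proposition~\ref{prop:mintrace:prob}.
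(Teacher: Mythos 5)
Your completion-of-the-square argument is correct, and it is a cleaner route than the paper's. The paper's proof passes through the chain $\frac{u^2}{n}+1 > \frac{2u}{\sqrt{n}}+\frac{1}{n} > \frac{2u}{\sqrt{n+1}}+\frac{1}{n+1}$, but the first inequality is equivalent to $(u-\sqrt{n})^2 > 1$, which fails (for example at $u=\sqrt{n}$); the asserted derivation from ``$(u+\sqrt{n})^2+1>0$'' also does not give that inequality after expansion. Your identity
\begin{align*}
1-h_n(u) = \Big(\tfrac{u}{\sqrt{n}} - \tfrac{\sqrt{n}}{\sqrt{n+1}}\Big)^{\!2}
\end{align*}
bypasses this entirely and, importantly, reveals that the lemma as written is imprecise: the strict inequality fails exactly at $u^\star = n/\sqrt{n+1} \ne 0$, so the correct side condition should be $u \ne u^\star$ rather than $u \ne 0$ (the two conditions are incomparable, not one weaker than the other). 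As you observe, this is harmless downstream because every invocation of $h_n$ in Lemmas~\ref{lem:gausserrbound} and~\ref{lem:pi:tracebound} and in Proposition~\ref{prop:mintrace:prob} restricts to $0 < u < n/\sqrt{n+1}$, where the bracketed quantity is strictly negative and the square strictly positive. In short, your proof is more than correct --- it also flags and repairs a sign error in the paper's argument and an overstatement in the lemma itself.
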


\begin{proof}
Since $(u+\sqrt{n})^{2}+1>0$, re-writing this inequality yields
\begin{align*}
\frac{u^{2}}{n}+1
> \frac{2u}{\sqrt{n}} + \frac1n
&>  \frac{2u}{\sqrt{n+1}} + \frac1{n+1} \\
\implies
1 + \frac{u^{2}}{n} - \frac{2u}{\sqrt{n+1}} - \frac1{n+1}
&> 0
\end{align*}

\noindent
Comparing with \eqref{eq:def:littleh:bigH} yields the claim.
\end{proof}

\begin{proof}[Proof of Proposition~\ref{prop:mintrace:prob}]
Lemma~\ref{lem:trevent} implies that for a choice of \linebreak[4] $u=\sqrt{2(C+1)(d+1)\log p}$, we have
\begin{align*}
\pr\bigg(
\frac{1}{2n}\norm{\bErr{\pi}}_{\frob}^2 - \frac{1}{2n}\norm{\bErr{\estperm}}_{\frob}^2
> \resdevconst\pl(\tB{\pi})
\bigg)
&\le 2p\binom{p}{d}e^{-(C+1)(d+1)\log p} \\
&\le 2e^{-C(d+1)\log p}
\end{align*}

\noindent
for any $\pi\in\goodperms$. Thus the claim will follow if we can show that $\goodperm\in\goodperms$. Note that
\begin{align*}
\nonumber
\tr\tOm{\goodperm}\Big[1+H_{n}(u)\Big]
&- \tr\tOm{\estperm}\Big[1-h_{n}(u)\Big] \\
&\stackrel{(i)}{\le} \tr\tOm{\goodperm}\Big[H_{n}(u) + h_{n}(u)\Big] \notag \\
&\stackrel{(ii)}{\le}  \tr\tOm{\goodperm}\sqrt{\frac{50(C+1)(d+1)\log p}{n}} \\
&\stackrel{(iii)}{\le} \resdevconst\pl(\tB{\goodperm}),
\end{align*}
where (i) follows from $\tr\tOm{\goodperm}\le\tr\tOm{\estperm}$ and Lemma~\ref{lem:littleh:pos}, (ii) follows by using~\eqref{eq:prop:mintrace:prob:proof:2} with $u=\sqrt{2(C+1)(d+1)\log p}$,
and (iii) follows from assumption~\eqref{eq:prop:mintrace:prob:bound}.
Hence, $\goodperm\in\goodperms$ and the proof is complete.
\end{proof}

\bibliography{highdimdag-arxiv-bib-v3}

\begin{thebibliography}{57}
\providecommand{\natexlab}[1]{#1}
\providecommand{\url}[1]{\texttt{#1}}
\expandafter\ifx\csname urlstyle\endcsname\relax
  \providecommand{\doi}[1]{doi: #1}\else
  \providecommand{\doi}{doi: \begingroup \urlstyle{rm}\Url}\fi

\bibitem[Aragam and Zhou(2015)]{aragam2015}
B.~Aragam and Q.~Zhou.
\newblock Concave penalized estimation of sparse {G}aussian {B}ayesian
  networks.
\newblock \emph{Journal of Machine Learning Research}, 16:\penalty0 2273--2328,
  2015.

\bibitem[Bickel et~al.(2009)Bickel, Ritov, and Tsybakov]{bickel2009}
P.~J. Bickel, Y.~Ritov, and A.~B. Tsybakov.
\newblock Simultaneous analysis of {L}asso and {D}antzig selector.
\newblock \emph{The Annals of Statistics}, 37\penalty0 (4):\penalty0
  1705--1732, 2009.

\bibitem[B{\"u}hlmann et~al.(2014)B{\"u}hlmann, Peters, Ernest,
  et~al.]{buhlmann2014}
P.~B{\"u}hlmann, J.~Peters, J.~Ernest, et~al.
\newblock {CAM}: Causal additive models, high-dimensional order search and
  penalized regression.
\newblock \emph{The Annals of Statistics}, 42\penalty0 (6):\penalty0
  2526--2556, 2014.

\bibitem[Cai et~al.(2013)Cai, Bazerque, and Giannakis]{cai2013}
X.~Cai, J.~A. Bazerque, and G.~B. Giannakis.
\newblock Inference of gene regulatory networks with sparse structural equation
  models exploiting genetic perturbations.
\newblock \emph{PLoS Comput Biol}, 9\penalty0 (5):\penalty0 e1003068, 2013.

\bibitem[Chickering(2003)]{chickering2003}
D.~M. Chickering.
\newblock Optimal structure identification with greedy search.
\newblock \emph{The Journal of Machine Learning Research}, 3:\penalty0
  507--554, 2003.

\bibitem[Chickering et~al.(2004)Chickering, Heckerman, and
  Meek]{chickering2004}
D.~M. Chickering, D.~Heckerman, and C.~Meek.
\newblock Large-sample learning of {B}ayesian networks is {NP}-hard.
\newblock \emph{The Journal of Machine Learning Research}, 5:\penalty0
  1287--1330, 2004.

\bibitem[Drton et~al.(2011)Drton, Foygel, and Sullivant]{drton2011}
M.~Drton, R.~Foygel, and S.~Sullivant.
\newblock Global identifiability of linear structural equation models.
\newblock \emph{The Annals of Statistics}, 39\penalty0 (2):\penalty0 865--886,
  2011.

\bibitem[Fu and Zhou(2013)]{fu2013}
F.~Fu and Q.~Zhou.
\newblock Learning sparse causal {G}aussian networks with experimental
  intervention: {R}egularization and coordinate descent.
\newblock \emph{Journal of the American Statistical Association}, 108\penalty0
  (501):\penalty0 288--300, 2013.

\bibitem[Gordon(1988)]{gordon1988}
Y.~Gordon.
\newblock On {M}ilman's inequality and random subspaces which escape through a
  mesh in $\mathbb{R}^n$.
\newblock \emph{Geometric Aspects of Functional Analysis}, pages 84--106, 1988.

\bibitem[Gu et~al.(2016)Gu, Fu, and Zhou]{gu2016}
J.~Gu, F.~Fu, and Q.~Zhou.
\newblock Adaptive penalized estimation of directed acyclic graphs from
  categorical data.
\newblock \emph{arXiv preprint arXiv:1403.2310}, 2016.

\bibitem[Guay et~al.(2015)Guay, Morin, Litalien, Valois, and
  Vallerand]{guay2015}
F.~Guay, A.~J. Morin, D.~Litalien, P.~Valois, and R.~J. Vallerand.
\newblock Application of exploratory structural equation modeling to evaluate
  the academic motivation scale.
\newblock \emph{The Journal of Experimental Education}, 83\penalty0
  (1):\penalty0 51--82, 2015.

\bibitem[Han et~al.(2016)Han, Chen, Cheon, and Zhong]{han2016}
S.~W. Han, G.~Chen, M.-S. Cheon, and H.~Zhong.
\newblock Estimation of directed acyclic graphs through two-stage adaptive
  lasso for gene network inference.
\newblock \emph{Journal of the American Statistical Association}, 111\penalty0
  (515):\penalty0 1004--1019, 2016.

\bibitem[Heckerman et~al.(1995)Heckerman, Geiger, and
  Chickering]{heckerman1995}
D.~Heckerman, D.~Geiger, and D.~M. Chickering.
\newblock Learning {B}ayesian networks: {T}he combination of knowledge and
  statistical data.
\newblock \emph{Machine learning}, 20\penalty0 (3):\penalty0 197--243, 1995.

\bibitem[Horvath(2011)]{horvath2011}
S.~Horvath.
\newblock \emph{Weighted network analysis: applications in genomics and systems
  biology}.
\newblock Springer Science \& Business Media, 2011.

\bibitem[Hoyer et~al.(2009)Hoyer, Janzing, Mooij, Peters, and
  Sch{\"o}lkopf]{hoyer2009}
P.~O. Hoyer, D.~Janzing, J.~M. Mooij, J.~Peters, and B.~Sch{\"o}lkopf.
\newblock Nonlinear causal discovery with additive noise models.
\newblock In \emph{Advances in neural information processing systems}, pages
  689--696, 2009.

\bibitem[Huang et~al.(2012)Huang, Breheny, and Ma]{huang2012}
J.~Huang, P.~Breheny, and S.~Ma.
\newblock A selective review of group selection in high-dimensional models.
\newblock \emph{Statistical science: a review journal of the Institute of
  Mathematical Statistics}, 27\penalty0 (4), 2012.

\bibitem[Kalisch and B{\"u}hlmann(2007)]{kalisch2007}
M.~Kalisch and P.~B{\"u}hlmann.
\newblock Estimating high-dimensional directed acyclic graphs with the
  {PC}-algorithm.
\newblock \emph{The Journal of Machine Learning Research}, 8:\penalty0
  613--636, 2007.

\bibitem[Koller and Friedman(2009)]{koller2009}
D.~Koller and N.~Friedman.
\newblock \emph{Probabilistic graphical models: principles and techniques}.
\newblock MIT press, 2009.

\bibitem[Lin et~al.(2014)Lin, Uhler, Sturmfels, and B{\"u}hlmann]{lin2014}
S.~Lin, C.~Uhler, B.~Sturmfels, and P.~B{\"u}hlmann.
\newblock Hypersurfaces and their singularities in partial correlation testing.
\newblock \emph{Foundations of Computational Mathematics}, 14\penalty0
  (5):\penalty0 1079--1116, 2014.

\bibitem[Loh and B{\"u}hlmann(2014)]{loh2014}
P.-L. Loh and P.~B{\"u}hlmann.
\newblock High-dimensional learning of linear causal networks via inverse
  covariance estimation.
\newblock \emph{Journal of Machine Learning Research}, 15:\penalty0 3065--3105,
  2014.

\bibitem[Loh and Wainwright(2013)]{loh2013}
P.-L. Loh and M.~J. Wainwright.
\newblock Structure estimation for discrete graphical models: Generalized
  covariance matrices and their inverses.
\newblock \emph{The Annals of Statistics}, 41\penalty0 (6):\penalty0
  3022--3049, 2013.

\bibitem[Loh and Wainwright(2014)]{loh2014nonconvex}
P.-L. Loh and M.~J. Wainwright.
\newblock Support recovery without incoherence: A case for nonconvex
  regularization.
\newblock \emph{arXiv preprint arXiv:1412.5632}, 2014.

\bibitem[Loh and Wainwright(2015)]{loh2015}
P.-L. Loh and M.~J. Wainwright.
\newblock Regularized {$M$}-estimators with nonconvexity: Statistical and
  algorithmic theory for local optima.
\newblock \emph{Journal of Machine Learning Research}, 16:\penalty0 559--616,
  2015.

\bibitem[Meinshausen and B{\"u}hlmann(2006)]{meinshausen2006}
N.~Meinshausen and P.~B{\"u}hlmann.
\newblock High-dimensional graphs and variable selection with the {L}asso.
\newblock \emph{The Annals of Statistics}, 34\penalty0 (3):\penalty0
  1436--1462, 2006.

\bibitem[Mooij et~al.(2014)Mooij, Peters, Janzing, Zscheischler, and
  Sch{\"o}lkopf]{mooij2014}
J.~M. Mooij, J.~Peters, D.~Janzing, J.~Zscheischler, and B.~Sch{\"o}lkopf.
\newblock Distinguishing cause from effect using observational data: methods
  and benchmarks.
\newblock \emph{arXiv preprint arXiv:1412.3773}, 2014.

\bibitem[Morin et~al.(2015)Morin, Arens, and Marsh]{morin2015}
A.~J. Morin, A.~K. Arens, and H.~W. Marsh.
\newblock A bifactor exploratory structural equation modeling framework for the
  identification of distinct sources of construct-relevant psychometric
  multidimensionality.
\newblock \emph{Structural Equation Modeling: A Multidisciplinary Journal},
  pages 1--24, 2015.

\bibitem[Ott and Miyano(2003)]{ott2003}
S.~Ott and S.~Miyano.
\newblock Finding optimal gene networks using biological constraints.
\newblock \emph{Genome Informatics}, 14:\penalty0 124--133, 2003.

\bibitem[Peters and B\"uhlmann(2013)]{peters2013}
J.~Peters and P.~B\"uhlmann.
\newblock Identifiability of {G}aussian structural equation models with equal
  error variances.
\newblock \emph{Biometrika}, 101\penalty0 (1):\penalty0 219--228, 2013.

\bibitem[Peters et~al.(2012)Peters, Mooij, Janzing, and
  Sch{\"o}lkopf]{peters2012nonlinear}
J.~Peters, J.~Mooij, D.~Janzing, and B.~Sch{\"o}lkopf.
\newblock Identifiability of causal graphs using functional models.
\newblock \emph{arXiv preprint arXiv:1202.3757}, 2012.

\bibitem[Peters et~al.(2014)Peters, Mooij, Janzing, Sch{\"o}lkopf,
  et~al.]{peters2014}
J.~Peters, J.~M. Mooij, D.~Janzing, B.~Sch{\"o}lkopf, et~al.
\newblock Causal discovery with continuous additive noise models.
\newblock \emph{Journal of Machine Learning Research}, 15\penalty0
  (1):\penalty0 2009--2053, 2014.

\bibitem[Pourahmadi(2013)]{pourahmadi2013}
M.~Pourahmadi.
\newblock \emph{High-dimensional covariance estimation}.
\newblock John Wiley \& Sons, 2013.

\bibitem[Ramsey et~al.(2016)Ramsey, Glymour, Sanchez-Romero, and
  Glymour]{ramsey2016}
J.~Ramsey, M.~Glymour, R.~Sanchez-Romero, and C.~Glymour.
\newblock A million variables and more: the fast greedy equivalence search
  algorithm for learning high-dimensional graphical causal models, with an
  application to functional magnetic resonance images.
\newblock \emph{International Journal of Data Science and Analytics}, pages
  1--9, 2016.

\bibitem[Raskutti and Uhler(2014)]{raskutti2014}
G.~Raskutti and C.~Uhler.
\newblock Learning directed acyclic graphs based on sparsest permutations.
\newblock \emph{arXiv preprint arXiv:1307.0366}, 2014.

\bibitem[Raskutti et~al.(2010)Raskutti, Wainwright, and Yu]{raskutti2010}
G.~Raskutti, M.~J. Wainwright, and B.~Yu.
\newblock Restricted eigenvalue properties for correlated {G}aussian designs.
\newblock \emph{The Journal of Machine Learning Research}, 11:\penalty0
  2241--2259, 2010.

\bibitem[Raskutti et~al.(2011)Raskutti, Wainwright, and Yu]{raskutti2011}
G.~Raskutti, M.~J. Wainwright, and B.~Yu.
\newblock Minimax rates of estimation for high-dimensional linear regression
  over $\ell_q$-balls.
\newblock \emph{Information Theory, IEEE Transactions on}, 57\penalty0
  (10):\penalty0 6976--6994, 2011.

\bibitem[Ravikumar et~al.(2010)Ravikumar, Wainwright, Lafferty,
  et~al.]{ravikumar2010}
P.~Ravikumar, M.~J. Wainwright, J.~D. Lafferty, et~al.
\newblock High-dimensional ising model selection using $\ell_1$-regularized
  logistic regression.
\newblock \emph{The Annals of Statistics}, 38\penalty0 (3):\penalty0
  1287--1319, 2010.

\bibitem[Schmidt et~al.(2007)Schmidt, Niculescu-Mizil, and Murphy]{schmidt2007}
M.~Schmidt, A.~Niculescu-Mizil, and K.~Murphy.
\newblock Learning graphical model structure using {L1}-regularization paths.
\newblock In \emph{AAAI}, volume~7, pages 1278--1283, 2007.

\bibitem[Shimizu et~al.(2006)Shimizu, Hoyer, Hyv{\"a}rinen, and
  Kerminen]{shimizu2006}
S.~Shimizu, P.~O. Hoyer, A.~Hyv{\"a}rinen, and A.~Kerminen.
\newblock A linear non-{G}aussian acyclic model for causal discovery.
\newblock \emph{The Journal of Machine Learning Research}, 7:\penalty0
  2003--2030, 2006.

\bibitem[Shojaie and Michailidis(2010)]{shojaie2010}
A.~Shojaie and G.~Michailidis.
\newblock Penalized likelihood methods for estimation of sparse
  high-dimensional directed acyclic graphs.
\newblock \emph{Biometrika}, 97\penalty0 (3):\penalty0 519--538, 2010.

\bibitem[Silander and Myllymaki(2006)]{silander2012}
T.~Silander and P.~Myllymaki.
\newblock A simple approach for finding the globally optimal bayesian network
  structure.
\newblock In \emph{Proceedings of the 22nd Conference on Uncertainty in
  Artificial Intelligence}, 2006.

\bibitem[Singh and Moore(2005)]{singh2005}
A.~P. Singh and A.~W. Moore.
\newblock Finding optimal bayesian networks by dynamic programming.
\newblock 2005.

\bibitem[Spirtes et~al.(2000)Spirtes, Glymour, and Scheines]{spirtes2000}
P.~Spirtes, C.~Glymour, and R.~Scheines.
\newblock \emph{Causation, prediction, and search}, volume~81.
\newblock The MIT Press, 2000.

\bibitem[Teyssier and Koller(2012)]{teyssier2012}
M.~Teyssier and D.~Koller.
\newblock Ordering-based search: A simple and effective algorithm for learning
  bayesian networks.
\newblock \emph{arXiv preprint arXiv:1207.1429}, 2012.

\bibitem[Uhler et~al.(2013)Uhler, Raskutti, B{\"u}hlmann, and Yu]{uhler2013}
C.~Uhler, G.~Raskutti, P.~B{\"u}hlmann, and B.~Yu.
\newblock Geometry of the faithfulness assumption in causal inference.
\newblock \emph{The Annals of Statistics}, 41\penalty0 (2):\penalty0 436--463,
  2013.

\bibitem[van~de Geer and B{\"u}hlmann(2013)]{geer2013}
S.~van~de Geer and P.~B{\"u}hlmann.
\newblock $\ell_0$-penalized maximum likelihood for sparse directed acyclic
  graphs.
\newblock \emph{The Annals of Statistics}, 41\penalty0 (2):\penalty0 536--567,
  2013.

\bibitem[Wermuth(1980)]{wermuth1980}
N.~Wermuth.
\newblock Linear recursive equations, covariance selection, and path analysis.
\newblock \emph{Journal of the American Statistical Association}, 75\penalty0
  (372):\penalty0 963--972, 1980.

\bibitem[Wright(1921)]{wright1921}
S.~Wright.
\newblock Correlation and causation.
\newblock \emph{Journal of agricultural research}, 20\penalty0 (7):\penalty0
  557--585, 1921.

\bibitem[Wright(1934)]{wright1934}
S.~Wright.
\newblock The method of path coefficients.
\newblock \emph{The Annals of Mathematical Statistics}, 5\penalty0
  (3):\penalty0 161--215, 1934.

\bibitem[Xiang and Kim(2013)]{xiang2013}
J.~Xiang and S.~Kim.
\newblock A* {L}asso for learning a sparse {B}ayesian network structure for
  continuous variables.
\newblock In \emph{Advances in Neural Information Processing Systems}, pages
  2418--2426, 2013.

\bibitem[Yang et~al.(2015)Yang, Ravikumar, Allen, and Liu]{yang2015}
E.~Yang, P.~Ravikumar, G.~I. Allen, and Z.~Liu.
\newblock Graphical models via univariate exponential family distributions.
\newblock \emph{Journal of Machine Learning Research}, 16:\penalty0 3813--3847,
  2015.

\bibitem[Yuan and Bentler(2007)]{yuan2007sem}
K.-H. Yuan and P.~M. Bentler.
\newblock \emph{Structural Equation Modeling}, volume~26.
\newblock Handbook of Statistics, 2007.

\bibitem[Zhang(2010)]{zhang2010}
C.-H. Zhang.
\newblock Nearly unbiased variable selection under minimax concave penalty.
\newblock \emph{The Annals of Statistics}, 38\penalty0 (2):\penalty0 894--942,
  2010.

\bibitem[Zhang and Huang(2008)]{zhang2008}
C.-H. Zhang and J.~Huang.
\newblock The sparsity and bias of the {L}asso selection in high-dimensional
  linear regression.
\newblock \emph{The Annals of Statistics}, pages 1567--1594, 2008.

\bibitem[Zhang and Zhang(2012)]{zhang2012}
C.-H. Zhang and T.~Zhang.
\newblock A general theory of concave regularization for high-dimensional
  sparse estimation problems.
\newblock \emph{Statistical Science}, 27\penalty0 (4):\penalty0 576--593, 2012.

\bibitem[Zhang and Spirtes(2002)]{zhang2002}
J.~Zhang and P.~Spirtes.
\newblock Strong faithfulness and uniform consistency in causal inference.
\newblock In \emph{Proceedings of the nineteenth conference on uncertainty in
  artificial intelligence}, pages 632--639. Morgan Kaufmann Publishers Inc.,
  2002.

\bibitem[Zhang and Hyv{\"a}rinen(2009)]{zhang2009}
K.~Zhang and A.~Hyv{\"a}rinen.
\newblock On the identifiability of the post-nonlinear causal model.
\newblock In \emph{Proceedings of the twenty-fifth conference on uncertainty in
  artificial intelligence}, pages 647--655. AUAI Press, 2009.

\bibitem[Zhao and Yu(2006)]{zhao2006}
P.~Zhao and B.~Yu.
\newblock On model selection consistency of {L}asso.
\newblock \emph{The Journal of Machine Learning Research}, 7:\penalty0
  2541--2563, 2006.

\end{thebibliography}
\bibliographystyle{abbrvnat}
\label{bib:bib}

\end{document}